\newtheorem{theorem}{Theorem}[section]
\newtheorem{lemma}[theorem]{Lemma}
\newtheorem{corollary}[theorem]{Corollary}
\theoremstyle{definition}
\theoremstyle{remark}
\numberwithin{equation}{section}
\newcommand{\mmod}[1]{\,\,(\text{mod}\,\,#1)}
\def\bfh{{\mathbf h}}
\def\bfm{{\mathbf m}}
\def\bfn{{\mathbf n}}
\def\bfu{{\mathbf u}}
\def\bfv{{\mathbf v}}
\def\bfw{{\mathbf w}}
\def\bfx{{\mathbf x}}
\def\bfy{{\mathbf y}}
\def\bfz{{\mathbf z}}
\def\calB{{\mathcal B}} 
\def\calC{{\mathcal C}} 
\def\calD{{\mathcal D}}
\def\calJ{{\mathcal J}}
 \def\Ktil{{\widetilde K}}
\def\calP{{\mathcal P}}
\def\calR{{\mathcal R}}
\def\util{\tilde{u}}\def\bfutil{\mathbf \util}\def\vtil{\tilde{v}}
\def\gtil{\tilde{g}}
\def\atil{\tilde{a}}\def\btil{\tilde{b}}
\def\Gtil{\widetilde G}\def\Itil{\widetilde I}\def\Ktil{\widetilde K}
\def\Atil{\tilde A}\def\Btil{\tilde B}
\def\dbC{{\mathbb C}}\def\dbN{{\mathbb N}}
\def\dbR{{\mathbb R}}
\def\dbZ{{\mathbb Z}}
\def\gra{{\mathfrak a}}\def\grA{{\mathfrak A}}
\def\grb{{\mathfrak b}}\def\grB{{\mathfrak B}}
\def\grC{{\mathfrak C}}
\def\grf{{\mathfrak f}}\def\grF{{\mathfrak F}}
\def\grG{{\mathfrak G}}
\def\grH{{\mathfrak H}}
\def\grS{{\mathfrak S}}
\def\grB{{\mathfrak B}}\def\grC{{\mathfrak C}}
\def\grs{{\mathfrak s}}
\def\alp{{\alpha}} \def\bfalp{{\boldsymbol \alpha}}
\def\bet{{\beta}}  
\def\gam{{\gamma}} \def\Gam{{\Gamma}}
\def\del{{\delta}} \def\Del{{\Delta}} \def\Deltil{{\widetilde \Delta}}
\def\zet{{\zeta}} \def\bfzet{{\boldsymbol \zeta}} 
\def\bfeta{{\boldsymbol \eta}} 
\def\tet{{\theta}} \def\bftet{{\boldsymbol \theta}} \def\Tet{{\Theta}}
\def\kap{{\kappa}}
\def\lam{{\lambda}} \def\Lam{{\Lambda}} 
\def\bfxi{{\boldsymbol \xi}}
\def\sig{{\sigma}}  
\def\Ups{{\Upsilon}} 
\def\bfpsi{{\boldsymbol \psi}}
\def\ome{{\omega}} \def\Ome{{\Omega}}
\def\d{{\partial}}
\def\eps{\varepsilon}
\def\le{\leqslant} \def\ge{\geqslant}
\def\d{{\,{\rm d}}}
\def\llbracket{\lbrack\;\!\!\lbrack} \def\rrbracket{\rbrack\;\!\!\rbrack}
\begin{document}
\title[Approximating the main conjecture]{Approximating the main conjecture in Vinogradov's mean value theorem}
\author[Trevor D. Wooley]{Trevor D. Wooley}
\address{School of Mathematics, University of Bristol, University Walk, Clifton, Bristol BS8 1TW, United 
Kingdom}
\email{matdw@bristol.ac.uk}
\subjclass[2010]{11L15, 11L07, 11P55}
\keywords{Exponential sums, Hardy-Littlewood method}
\date{}
\begin{abstract} We apply multigrade efficient congruencing to estimate Vinogradov's integral of degree 
$k$ for moments of order $2s$, establishing strongly diagonal behaviour for 
$1\le s\le \frac{1}{2}k(k+1)-\frac{1}{3}k+o(k)$. In particular, as $k\rightarrow \infty$, we confirm the 
main conjecture in Vinogradov's mean value theorem for 100\% of the critical interval 
$1\le s\le \frac{1}{2}k(k+1)$.\end{abstract}
\maketitle

\section{Introduction} Our focus in this paper is the Vinogradov system of Diophantine equations. When 
$k$ and $s$ are natural numbers, and $X$ is a large real number, denote by $J_{s,k}(X)$ the number of
 integral solutions of the system
\begin{equation}\label{1.1}
x_1^j+\ldots +x_s^j=y_1^j+\ldots +y_s^j\quad (1\le j\le k),
\end{equation}
with $1\le x_i,y_i\le X$ $(1\le i\le s)$. By considering mean values of an associated exponential sum, it 
follows that the validity of the {\it strongly diagonal} estimate
\begin{equation}\label{1.2}
J_{s,k}(X)\ll X^{s+\eps},
\end{equation}
for $1\le s\le \frac{1}{2}k(k+1)$, would imply the {\it main conjecture} in Vinogradov's mean value 
theorem, which asserts that for each $\eps>0$, one has
\begin{equation}\label{1.3}
J_{s,k}(X)\ll X^\eps (X^s+X^{2s-\frac{1}{2}k(k+1)}).
\end{equation}
Here and throughout this paper, the implicit constants associated with Vinogradov's notation $\ll $ and
 $\gg $ may depend on $s$, $k$ and $\eps$. For almost all of the eighty year history of the subject, such 
an estimate has seemed a very remote prospect indeed, for until only a year ago the bound (\ref{1.2}) 
was known to hold only for $s\le k+1$ (see \cite[Lemma 5.4]{Hua1965}). By enhancing the {\it efficient 
congruencing} method introduced in \cite{Woo2012}, recent work of the author joint with Ford 
\cite{FW2013} has established the estimate (\ref{1.2}) for $s\le \frac{1}{4}(k+1)^2$, amounting to half of 
the range $s\le \frac{1}{2}k(k+1)$ in which diagonal behaviour dominates (\ref{1.3}). Our goal in this 
paper is to adapt the {\it multigrade efficient congruencing method} detailed in our very recent work 
\cite{Woo2014} so as to obtain strongly diagonal estimates of the shape (\ref{1.2}). It transpires that for 
large $k$, we are able to establish the validity of the conjectured estimate (\ref{1.2}) in 100\% of the 
critical interval $1\le s\le \frac{1}{2}k(k+1)$. We consequently find ourselves within a whisker of the proof 
of the main conjecture (\ref{1.3}), a situation unprecedented in the analysis of mean value estimates for 
exponential sums of large degree.\par

The most precise form of our new estimate may be found in Theorem \ref{theorem9.2} below. For the 
present, we restrict ourselves to a conclusion relatively simple to state that is indicative of what may now be 
achieved.

\begin{theorem}\label{theorem1.1}
Let $k$ be a natural number with $k\ge 7$, put $r=k-\lceil 2\sqrt{k}\rceil +2$, and suppose that $s$ is 
an integer satisfying
$$1\le s\le kr-\tfrac{1}{2}r(r-1)-\sum_{m=1}^{r-1}\frac{m(r-m)}{s-r-m}.$$
Then for each $\eps>0$, one has $J_{s,k}(X)\ll X^{s+\eps}$.
\end{theorem}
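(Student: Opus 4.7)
The plan is to derive the theorem as a corollary of the sharper Theorem \ref{theorem9.2}, whose proof occupies the bulk of the paper and proceeds via the \emph{multigrade efficient congruencing} machinery of \cite{Woo2014}, refined here to produce strongly diagonal behaviour (\ref{1.2}) rather than the coarser bounds available in the middle range. The strategy proceeds in three phases.

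First I would set up the analytic framework. Introduce the exponential sum $f_k(\bfalp;X) = \sum_{1 \le x \le X}e(\alp_1 x + \cdots + \alp_k x^k)$, so that $J_{s,k}(X) = \int_{[0,1)^k}|f_k(\bfalp;X)|^{2s}\,d\bfalp$, and let $\lam_{s,k}^*$ denote the infimum of $\lam$ for which $J_{s,k}(X) \ll X^{\lam+\eps}$. Writing $\del = \lam_{s,k}^* - s$, the objective is $\del = 0$ throughout the asserted range. Next, for an auxiliary prime $p$ in a suitable range and integer congruence exponents $b < c$, introduce a family of $p$-adically conditioned mean values $K_m$ indexed by a \emph{grade} $m$ with $1 \le m \le r-1$, where $r$ is the multigrade parameter (to be specialized later as in the statement). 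These mean values count solutions of (\ref{1.1}) with one half of the variables confined to a fixed residue class modulo $p^b$ and the other half modulo $p^c$, with the standard diagonal pathologies excised; the grade $m$ encodes the $p$-adic scale at which a designated cluster of variables coincides.

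The technical heart of the argument is an iterative inequality obtained by lifting the system (\ref{1.1}) modulo $p^{jc}$ for $1 \le j \le k$ and applying H\"older's inequality to distribute the resulting $k$-fold mean value across all $r-1$ grades simultaneously, producing an estimate of the schematic shape
\begin{equation*}
K_1 \ll X^{\Theta(s,k,r)}\prod_{m=1}^{r-1} K_m^{\omega_m},
\end{equation*}
with H\"older weights $\omega_m$ summing to one and an explicit loss $\Theta$. Iterating this relation with geometrically growing congruence exponents $c$ amplifies $\del$ by a factor $\Phi(s,k,r)$ per round while accumulating a bounded cumulative loss, and once a trivial bound closes the iteration after $O(\log k)$ steps, the condition $\Phi \le 1$ forces $\del = 0$. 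Careful bookkeeping translates this into the inequality $s + \sum_{m=1}^{r-1}\frac{m(r-m)}{s-r-m} \le kr - \tfrac{1}{2}r(r-1)$ of the theorem: each summand represents the contribution of one grade, with numerator $m(r-m)$ combinatorial in origin and denominator $s - r - m$ reflecting the reserve of free variables available to absorb that loss.

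Finally, I would specialize $r$. Treating $r$ as a continuous variable and extremizing the right side of the constraint shows that the permissible range of $s$ extends closest to $\tfrac{1}{2}k(k+1)$ when $(k-r)^2 \approx 4k$; integer rounding selects $r = k - \lceil 2\sqrt{k}\rceil + 2$, and the hypothesis $k \ge 7$ ensures $r \ge 2$ so that the multigrade scheme is nontrivial. The hard part will be the H\"older step: arranging the iteration so that the individual grade losses combine to yield \emph{exactly} $\sum_{m=1}^{r-1}\frac{m(r-m)}{s-r-m}$, rather than a cruder upper bound of Cauchy–Schwarz type. This precise accounting is the essential advance over the single-grade version of \cite{Woo2014} and demands judicious, problem-specific selection of the congruence exponents at each stage of the iteration.
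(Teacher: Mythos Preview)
Your overall plan---deduce Theorem~\ref{theorem1.1} from Theorem~\ref{theorem9.2}---is exactly what the paper does. However, what you have written is almost entirely a (loose) sketch of the proof of Theorem~\ref{theorem9.2}, not of the deduction of Theorem~\ref{theorem1.1} from it. The paper's actual proof of Theorem~\ref{theorem1.1} (in \S10) takes Theorem~\ref{theorem9.2} as a black box and consists of the following concrete verifications, none of which you carry out. First, there is a variable shift: the $s$ of Theorem~\ref{theorem9.2} is the $s$ of Theorem~\ref{theorem1.1} minus $r$, so one sets $t_0 = kr - \tfrac{1}{2}r(r+1) - \Del$ with $\Del$ as in (\ref{10.1}) and must show $t_0$ lies in the admissible interval $[\max\{2r-1,\lceil \tfrac{1}{2}r(r-1)+\Del\rceil\},\, s_1]$. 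The upper bound $t_0 \le s_1$ is the non-trivial step: it reduces to $r(k-r) \ge 2\Del$, which the paper checks via the estimate $\Del < \tfrac{2}{3}(k-6)$ valid for $k \ge 7$ and $s > \tfrac{1}{4}(k+1)^2$. Small $s$ are handled by citing \cite[Theorem~1.1]{FW2013}.

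Your account of why $r = k - \lceil 2\sqrt{k}\rceil + 2$ is also off. This value is not obtained by extremising the right-hand side of the constraint in $r$; indeed $kr - \tfrac{1}{2}r(r-1) - \Del$ is increasing in $r$ throughout the relevant range, so one wants $r$ as large as possible. The upper bound $r \le r_0 = k - \lceil 2\sqrt{k}\rceil + 2$ is instead a \emph{structural} constraint of the iteration: Lemma~\ref{lemma6.1} requires $(k-m)b \ge (m+1)a$, and maintaining the invariant $a \le b/\sqrt{k}$ (needed so that the congruence exponents keep growing geometrically, cf.\ the argument around (\ref{8.21})) forces $r-1 \le k - \lceil 2\sqrt{k}\rceil + 1$. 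The condition $k \ge 7$ is not there to ensure $r \ge 2$ (that holds already for $k \ge 4$) but to make the numerical inequalities of \S10 go through.
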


A modicum of computation reveals that when $k$ is large, this estimate confirms the main conjecture 
(\ref{1.3}) for almost all of the critical interval of exponents $1\le s\le \tfrac{1}{2}k(k+1)$.

\begin{corollary}\label{corollary1.2}
When $s$ and $k$ are natural numbers with $k\ge 7$ and
$$1\le s\le \tfrac{1}{2}k(k+1)-\tfrac{7}{3}k,$$ 
then for each $\eps>0$, one has $J_{s,k}(X)\ll X^{s+\eps}$.
\end{corollary}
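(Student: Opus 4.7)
The plan is to deduce Corollary \ref{corollary1.2} from Theorem \ref{theorem1.1} by verifying its hypothesis for all $s$ in the stated range, supplemented by the strongly diagonal estimates of \cite{FW2013} for smaller values of $s$. Set $r = k - \lceil 2\sqrt{k}\rceil + 2$ and $u = k - r = \lceil 2\sqrt{k}\rceil - 2$. The first step is to rewrite the main term of Theorem \ref{theorem1.1}'s upper bound in the transparent form
\[
kr - \tfrac{1}{2}r(r-1) \;=\; \tfrac{1}{2}r(2k - r + 1) \;=\; \tfrac{1}{2}\bigl(k(k+1) - u(u+1)\bigr).
\]
Since $u \le 2\sqrt{k} - 1$ for every $k \ge 1$, we have $u(u+1) \le 4k - 2\sqrt{k}$, whence $kr - \tfrac{1}{2}r(r-1) \ge \tfrac{1}{2}k(k+1) - 2k + \sqrt{k}$.

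The next step is to estimate the sum $S(s) = \sum_{m=1}^{r-1}\frac{m(r-m)}{s-r-m}$. Provided $s \ge 2r$, each denominator is at least $s - 2r + 1$, and the elementary identity $\sum_{m=1}^{r-1} m(r-m) = \tfrac{1}{6}r(r^2-1)$ yields
\[
S(s) \;\le\; \frac{r(r^2 - 1)}{6(s - 2r + 1)}.
\]
For $s$ lying in the subinterval $\bigl(\tfrac{1}{4}(k+1)^2,\, \tfrac{1}{2}k(k+1) - \tfrac{7}{3}k\bigr]$ and for $r \le k$, the quantity $s - 2r + 1$ is comparable to $\tfrac{1}{2}k^2$, and hence $S(s) \le \tfrac{1}{3}k + O(1)$.

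Combining these two estimates, one finds for such $s$ that
\[
s + S(s) \;\le\; \tfrac{1}{2}k(k+1) - \tfrac{7}{3}k + \tfrac{k}{3} + O(1) \;=\; \tfrac{1}{2}k(k+1) - 2k + O(1),
\]
which is dominated by the lower bound $\tfrac{1}{2}k(k+1) - 2k + \sqrt{k}$ secured in the first step provided $k$ is sufficiently large. This verifies the hypothesis of Theorem \ref{theorem1.1} and yields $J_{s,k}(X) \ll X^{s+\eps}$. For the complementary range $1 \le s \le \tfrac{1}{4}(k+1)^2$, the same estimate is supplied at once by \cite{FW2013}; since a short calculation reveals $\tfrac{1}{4}(k+1)^2 \ge \tfrac{1}{2}k(k+1) - \tfrac{7}{3}k$ whenever $k \le 9$, Theorem \ref{theorem1.1} is genuinely invoked only for $k \ge 10$.

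The main obstacle is to sharpen the bookkeeping enough that the inequality $s + S(s) \le kr - \tfrac{1}{2}r(r-1)$ holds for every $k \ge 7$ rather than merely asymptotically. The implicit constants in the $O(1)$ terms above must be controlled explicitly, and one expects to verify a short range of small values of $k$ (those for which Theorem \ref{theorem1.1} is required but the asymptotic estimates are not yet tight) by direct numerical computation, before the inequality $\sqrt{k} \ge O(1)$ closes the argument uniformly for all larger $k$.
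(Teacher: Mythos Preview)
Your approach matches the paper's in outline: bound $kr-\tfrac{1}{2}r(r-1)$ below, bound the sum $S(s)$ above, and combine. But there is a genuine error in your estimate for $S(s)$. You assert that for $s\in\bigl(\tfrac{1}{4}(k+1)^2,\,\tfrac{1}{2}k(k+1)-\tfrac{7}{3}k\bigr]$ the quantity $s-2r+1$ is comparable to $\tfrac{1}{2}k^2$, and hence $S(s)\le\tfrac{1}{3}k+O(1)$. This is false at the bottom of the interval: when $s\approx\tfrac{1}{4}k^2$ one has $s-2r+1\approx\tfrac{1}{4}k^2$, and your bound yields only $S(s)\lesssim\tfrac{2}{3}k$. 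You then add the worst-case bound $s\le\tfrac{1}{2}k(k+1)-\tfrac{7}{3}k$ (attained at the top) to the claimed bound $S(s)\le\tfrac{1}{3}k+O(1)$ (valid only at the top), obtaining an estimate that does not follow for general $s$ in the interval. With the uniform bound $S(s)\le\tfrac{2}{3}k+O(1)$, the combined inequality $s+S(s)\le\tfrac{1}{2}k(k+1)-2k+\sqrt{k}$ fails.

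The repair is exactly what the paper does: rather than treating all $s$ at once, take the single extremal value $s=\tfrac{1}{2}k(k+1)-\lceil\tfrac{7}{3}k\rceil$. For this $s$ the denominators $s-r-m$ genuinely are of size $\tfrac{1}{2}k^2+O(k)$, and a careful version of your numerator estimate gives $S(s)<\tfrac{1}{3}k$ as a clean inequality (not merely up to $O(1)$) valid for every $k\ge 7$. The inequality $s<kr-\tfrac{1}{2}r(r-1)-S(s)$ then follows with margin $\sqrt{k}$, so Theorem~\ref{theorem1.1} applies at this $s$, and the bound for smaller $s$ follows at once by H\"older's inequality applied to $J_{s,k}(X)$. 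This also eliminates the need to invoke \cite{FW2013} separately, and the need for any numerical verification at small $k$.
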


As we have already noted, a conclusion analogous to Theorem \ref{theorem1.1} is made available in 
\cite[Theorem 1.1]{FW2013} in the more limited range $1\le s\le \frac{1}{4}(k+1)^2$. Earlier 
conclusions were limited to the much shorter interval $1\le s\le k+1$ (see \cite{VW1997} for a particularly 
precise statement in this situation). In view of the lower bound
\begin{equation}\label{1.4}
J_{s,k}(X)\gg X^s+X^{2s-\frac{1}{2}k(k+1)},
\end{equation}
that arises by considering the diagonal solutions of (\ref{1.1}) with $\bfx=\bfy$, together with a lower 
bound for the product of local densities (see \cite[equation (7.4)]{Vau1997}), one sees that the 
conclusion of Theorem \ref{theorem1.1} cannot hold when $s>\tfrac{1}{2}k(k+1)$.\par

The gap between our new result and a complete proof of the main conjecture (\ref{1.3}) stands at 
$\tfrac{7}{3}k$ variables, amounting to a proportion $O(1/k)$ of the length of the critical 
interval $1\le s\le \frac{1}{2}k(k+1)$. In a certain sense, therefore, our conclusion establishes the main 
conjecture in 100\% of the critical interval as $k\rightarrow \infty$, justifying the assertion concluding our 
opening paragraph. In \S11 we describe some modifications to our basic strategy which offer some 
improvement in the conclusion of Corollary \ref{corollary1.2} towards the main conjecture (\ref{1.3}). 
We have opted for a relatively concise account of this improvement, since the details of the associated 
argument are of sufficient complexity that the key elements of our basic strategy would be obscured were 
we to make this account the main focus of our exposition.\par

\begin{theorem}\label{theorem1.3}
When $k$ is a sufficiently large natural number, and $s$ is an integer satisfying 
$$1\le s\le \tfrac{1}{2}k(k+1)-\tfrac{1}{3}k-8k^{2/3},$$
then for each $\eps>0$, one has $J_{s,k}(X)\ll X^{s+\eps}$.
\end{theorem}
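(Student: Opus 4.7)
The plan is to analyze precisely how the $\tfrac{7}{3}k$ deficit in Corollary \ref{corollary1.2} arises, and then to invoke the modifications to the basic multigrade strategy described in \S11 to tighten the argument. Rewriting the admissibility condition of Theorem \ref{theorem1.1} in the form
$$\tfrac{1}{2}k(k+1) - s \ge \tfrac{1}{2}(k-r)(k-r+1) + \sum_{m=1}^{r-1}\frac{m(r-m)}{s-r-m},$$
one sees that the gap to the critical exponent splits into two distinct contributions. With the choice $r = k - \lceil 2\sqrt{k}\rceil + 2$ made in Theorem \ref{theorem1.1}, the quadratic term $\tfrac{1}{2}(k-r)(k-r+1)$ contributes approximately $2k$. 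Using the identity $\sum_{m=1}^{r-1} m(r-m) = \tfrac{1}{6}r(r-1)(r+1)$ together with $s-r-m \sim s \sim \tfrac{1}{2}k^2$ for $m\le r-1=O(k)$, the harmonic-type sum contributes approximately $r^3/(6s)\sim \tfrac{1}{3}k$. Together these produce precisely the $\tfrac{7}{3}k$ deficit.

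To push the total down to $\tfrac{1}{3}k + 8k^{2/3}$, I would take $r$ of the form $k - \lceil 4k^{1/3}\rceil$. The quadratic contribution then becomes $\tfrac{1}{2}(4k^{1/3})^2 + O(k^{1/3}) = 8k^{2/3} + O(k^{1/3})$, while the harmonic sum remains $\tfrac{1}{3}k + O(1)$. Summed, these yield precisely the bound claimed in Theorem \ref{theorem1.3}. The entire task therefore reduces to demonstrating that such an aggressive choice of $r$ is admissible in a strengthened form of Theorem \ref{theorem9.2}.

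The reason Theorem \ref{theorem1.1} is restricted to $r$ of order $k - \sqrt{k}$ is that the basic multigrade efficient congruencing iteration loses its grip when the buffer $k-r$ becomes too small: the step in which one conditions on $r$ well-spaced residue classes distributed among the $k$ grades becomes increasingly delicate, and ultimately degenerate, as $r\to k$. The modifications of \S11 would compensate for this by inserting an auxiliary conditioning or breeding step, effectively interpolating between the multigrade iteration and the more traditional efficient congruencing scheme of \cite{FW2013}. This restores the separation needed to push $r$ as close as $k - 4k^{1/3}$, at the cost of an additional error that is absorbed into the $8k^{2/3}$ term of the final bound.

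The main obstacle will be the delicate bookkeeping required to track the modified iteration over many rounds without accumulating losses of order greater than $O(k^{2/3})$. At each stage the growth of the multigrade hierarchy must be balanced against the shrinking buffer, and the final optimization of the constant $4$ in the choice of $r$ (responsible for the constant $8$ in $8k^{2/3}$) demands a quantitative analysis of the interaction between the conditioning steps and the congruence conditions. The persistence of the $\tfrac{1}{3}k$ term in the bound appears intrinsic to the method, reflecting a harmonic-sum contribution that seems not amenable to further reduction within the present framework.
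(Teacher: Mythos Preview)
Your high-level instinct---push $r$ closer to $k$ and use the refinements of \S11 to make this legal---is correct, but the specific mechanism you describe is not what the paper does, and your numerology for the $8k^{2/3}$ term is a coincidence rather than the actual source of the error.

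First, the paper takes $r=k-\lceil k^{1/3}\rceil$, not $k-\lceil 4k^{1/3}\rceil$. With the paper's choice one has $\tfrac{1}{2}(k-r)(k-r+1)\approx \tfrac{1}{2}k^{2/3}$, so the quadratic term you isolate contributes essentially nothing. The $8k^{2/3}$ in the final bound arises instead from error terms in a \emph{modified} recurrence analysis: the quantities $\gra$ and $\grb$ are redefined in (11.9) as $\gra=\tfrac{1}{2}k(k+1)-\tfrac{1}{3}k-3k^{2/3}$ and $\grb=\tfrac{1}{2}k(k-1)+3k^{5/3}$, and the $O(k^{2/3})$ losses there (coming from bounding the truncation error $\Ups$ in the proof of Lemma~\ref{lemma11.5}) propagate to the final $\tet_+$. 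The apparent match between your $\tfrac{1}{2}(4k^{1/3})^2=8k^{2/3}$ and the paper's constant is accidental.

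Second, and more seriously, the \S11 modification is not an ``auxiliary conditioning or breeding step'' grafted onto the existing iteration. The obstruction to taking $r$ this large is that Lemma~\ref{lemma6.1} requires $(k-m)b\ge (m+1)a$, which fails for large $m$ once $a/b$ is not forced to be tiny. The paper's fix is to \emph{truncate} the multigrade sum: for each node one introduces $u=u(a,b)=rb/(a+b)$ and applies the efficient-congruencing Lemma~\ref{lemma6.1} only for $0\le m\le u-1$, replacing it for $u\le m\le r-1$ by a crude H\"older estimate (Lemma~\ref{lemma11.1}) that sacrifices those branches entirely. This changes the combinatorics of the iteration tree (products now run over $\bfm$ with $0\le m_i\le \vtil_i-1$ rather than $0\le m_i\le r-1$), which in turn changes the recurrence governing $B_n$ from an exact identity to a recurrence \emph{inequality} (11.21). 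Establishing the needed lower bound $B_n\ge \Btil_n$ then requires a separate comparison argument (Lemma~\ref{lemma11.5}). None of this structure is visible in your sketch, and simply plugging a larger $r$ into the Theorem~\ref{theorem1.1} formula does not constitute a proof, because that formula ceases to be valid precisely in the regime you want.
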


Our methods are applicable not only for large values of $k$. By taking $r=k-\lceil 2\sqrt{k}\rceil +2$ in 
Theorem \ref{theorem9.2} below, one may establish an improvement on the conclusion of 
\cite[Theorem 1.1]{FW2013}. We note here that the latter delivers the bound $J_{s,k}(X)\ll X^{s+\eps}$ 
for $1\le s\le \frac{1}{4}(k+1)^2$.\par

\begin{theorem}\label{theorem1.4}
Define the exponent $D(k)$ as in Table 1. Then whenever $k\ge 4$ and $1\le s\le D(k)$, one has 
$J_{s,k}(X)\ll X^{s+\eps}$.
\end{theorem}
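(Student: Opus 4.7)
The plan is to deduce Theorem \ref{theorem1.4} from the master result, Theorem \ref{theorem9.2}, which supplies a strongly diagonal bound for $J_{s,k}(X)$ whenever an auxiliary positive integer parameter $r$ (with $1\le r<k$) and the exponent $s$ satisfy a certain implicit inequality analogous to the one appearing in Theorem \ref{theorem1.1}. The asymptotic choice $r=k-\lceil 2\sqrt{k}\rceil+2$ that powers Theorem \ref{theorem1.1} is optimal only in a large-$k$ limit, so for each $k\ge 4$ in the table I would instead search over all admissible values $1\le r\le k-1$, and for each candidate $r$ compute the largest integer $s$ for which the defining inequality
$$s\le kr-\tfrac12 r(r-1)-\sum_{m=1}^{r-1}\frac{m(r-m)}{s-r-m}$$
is satisfied. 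Taking the maximum of these integers $s$ over $r$ produces $D(k)$.

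The first step is to verify, for each $k$ in the table, that the hypotheses of Theorem \ref{theorem9.2} are met for the chosen $r$; this is essentially a check that $r$ lies in the allowed range and that $s-r-m>0$ for every $m$ in the sum, which is automatic once $s>2r$. The second step is the arithmetic: one rewrites the inequality as a polynomial condition in $s$ of degree $r$ (after clearing denominators) and determines its largest integer root numerically. Because the right-hand side is monotone in $s$ past a short transient, a simple bisection or a tabulation of both sides for $s$ in the plausible range $\tfrac14(k+1)^2\le s\le \tfrac12 k(k+1)$ suffices to isolate $D(k)$. The third step is simply to record the resulting entries, and to confirm that the values exceed the previous bound $\tfrac14(k+1)^2$ of \cite{FW2013}, thereby justifying the claimed improvement.

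I expect the main obstacle to lie in the optimization over $r$ rather than in any deep analytic input. For small $k$ (say $k=4,5,6$), the number $\lceil 2\sqrt{k}\rceil$ is comparable to $k$ itself, so the large-$k$ recipe degenerates and one must try several nearby values of $r$ to locate the best one; moreover, the penalty sum $\sum_{m=1}^{r-1}m(r-m)/(s-r-m)$ is genuinely $s$-dependent, so solving for $D(k)$ requires a self-consistent choice rather than a closed-form expression. Care is also needed at the endpoint cases of the summation range, because if $s$ is taken too close to $2r$ any individual term can dominate and destroy the inequality, forcing one to shift to a smaller $r$ even when a larger $r$ would be favored heuristically. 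Once the optimal $r$ is pinned down for each $k$, the verification of the theorem is a routine (but case-by-case) calculation that feeds directly into Theorem \ref{theorem9.2}.
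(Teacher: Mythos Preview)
Your strategy --- feed a specific $r$ into the master theorem and compute --- is the right one, and it is essentially what the paper does: the text preceding Theorem~\ref{theorem1.4} says simply to take $r=k-\lceil 2\sqrt{k}\rceil+2$ in Theorem~\ref{theorem9.2}. No optimisation over $r$ is performed (and the range $1\le r\le k-1$ you propose is in any case inadmissible, since Theorem~\ref{theorem9.2} requires $2\le r\le k-\lceil 2\sqrt{k}\rceil+2$).

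There is, however, a genuine gap. The inequality you plan to verify,
\[
s\le kr-\tfrac12 r(r-1)-\sum_{m=1}^{r-1}\frac{m(r-m)}{s-r-m},
\]
is the \emph{simplified} threshold of Theorem~\ref{theorem1.1}, not the sharper condition $s-r<s_1$ of Theorem~\ref{theorem9.2}, in which $s_1$ is the explicit quadratic irrational involving $\sqrt{\gra^2-4\grb}$. The proof in \S10 shows only that the Theorem~\ref{theorem1.1} threshold lies \emph{below} $s_1+r$; the shortfall is essentially $r$. Concretely, for $k=7$ and $r=3$ your inequality holds only for $s\le 17$ (at $s=18$ one gets $18-2/14-2/13\approx 17.70<18$), whereas Theorem~\ref{theorem9.2} yields $s_1\approx 17.17$ when $s-r=17$, so that $s=20$ is admissible and $D(7)=20$ as in the table. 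The same discrepancy occurs throughout: with your inequality you would obtain $D(4)\le 6$ rather than $8$, $D(8)\le 25$ rather than $29$, and so on. To recover the table you must work with the full condition $s-r<s_1$ from Theorem~\ref{theorem9.2}, computing $\Del=\sum_{m=1}^{r-1}m(r-m)/(s-r-m)$ and then checking numerically whether $s-r$ lies below the larger root $\tet_+=\tfrac12(\gra+\sqrt{\gra^2-4\grb})$ with $\gra=kr-\tfrac12 r(r-1)-\Del$ and $\grb=r(\tfrac12 r(r-1)+\Del)$.
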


$$\boxed{\begin{matrix} k&4&5&6&7&8&9&10&11&12\\
D(k)&8&10&17&20&29&38&44&55&68\\
\tfrac{1}{2}k(k+1)&10&15&21&28&36&45&55&66&78\end{matrix}}$$
$$\boxed{\begin{matrix}k&13&14&15&16&17&18&19&20\\
D(k)&75&90&105&122&132&151&170&191\\
\tfrac{1}{2}k(k+1)&91&105&120&136&153&171&190&210\end{matrix}}$$
\vskip.2cm
\begin{center}\text{Table 1: Values for $D(k)$ described in Theorem \ref{theorem1.4}.}\end{center}
\vskip.1cm
\noindent 

One may enquire concerning the impact of our methods in the case $k=3$. Here, although the methods of 
this paper do not apply directly, a careful variant of our ideas proves surprisingly successful. This is a 
matter which we address elsewhere.\par

A trivial consequence of the estimate supplied by Theorems \ref{theorem1.1} and \ref{theorem1.3} 
provides a new upper bound for $J_{s,k}(X)$ at the critical value $s=\frac{1}{2}k(k+1)$.

\begin{theorem}\label{theorem1.5}
Suppose that $k$ is a large natural number. Then for each $\eps>0$, one has
$$J_{\frac{1}{2}k(k+1),k}(X)\ll X^{\frac{1}{2}k(k+1)+\Del+\eps},$$
where $\Del=(\tfrac{1}{3}+o(1))k$.
\end{theorem}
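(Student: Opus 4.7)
\medskip\noindent\textbf{Proof plan for Theorem \ref{theorem1.5}.}
The plan is to deduce the estimate at the critical exponent $s=\tfrac{1}{2}k(k+1)$ from the strongly diagonal bound of Theorem \ref{theorem1.3} by a one-line trivial-bound argument, exploiting the fact that the extra factors of the exponential sum $f_k(\bfalp;X)=\sum_{1\le x\le X}e(\alp_1x+\ldots +\alp_kx^k)$ can always be estimated by $X$.

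First, I would write the Vinogradov integral in its analytic form
$$J_{s,k}(X)=\int_{[0,1)^k}|f_k(\bfalp;X)|^{2s}\d\bfalp ,$$
so that, for integers $t\ge s\ge 1$, the pointwise inequality $|f_k(\bfalp;X)|\le X$ yields
$$J_{t,k}(X)\le X^{2(t-s)}\int_{[0,1)^k}|f_k(\bfalp;X)|^{2s}\d\bfalp =X^{2(t-s)}J_{s,k}(X).$$
Next, I apply Theorem \ref{theorem1.3}, which is available provided $k$ is sufficiently large, with the choice
$$s=\tfrac{1}{2}k(k+1)-\tfrac{1}{3}k-8k^{2/3},\qquad t=\tfrac{1}{2}k(k+1).$$
This gives $J_{s,k}(X)\ll X^{s+\eps}$, so that
$$J_{t,k}(X)\ll X^{2(t-s)+s+\eps}=X^{2t-s+\eps}.$$
Finally, substituting the values of $s$ and $t$, one obtains
$$2t-s=\tfrac{1}{2}k(k+1)+\tfrac{1}{3}k+8k^{2/3},$$
which is of the shape $\tfrac{1}{2}k(k+1)+\Del$ with $\Del=(\tfrac{1}{3}+o(1))k$, as required.

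There is really no serious obstacle here: the content of the theorem lies entirely in Theorem \ref{theorem1.3}, and the passage to the critical exponent is purely mechanical. The only point demanding any care is to verify that the integer $s=\lfloor\tfrac{1}{2}k(k+1)-\tfrac{1}{3}k-8k^{2/3}\rfloor$ does indeed fall within the range permitted by Theorem \ref{theorem1.3} (which it does for all sufficiently large $k$) and to track the $o(k)$ error explicitly, noting that $8k^{2/3}=o(k)$. One could instead deploy Theorem \ref{theorem1.1} in the same way, but this yields the weaker constant $\tfrac{7}{3}$ in place of $\tfrac{1}{3}$; using Theorem \ref{theorem1.3} is what produces the sharper exponent $\Del=(\tfrac{1}{3}+o(1))k$ claimed.
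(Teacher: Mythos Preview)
Your proposal is correct and follows essentially the same approach as the paper: one applies Theorem~\ref{theorem1.3} at the largest admissible exponent $\grs=\lfloor \tfrac{1}{2}k(k+1)-\tfrac{1}{3}k-8k^{2/3}\rfloor$, then uses the trivial bound $|f_k(\bfalp;X)|\le X$ to pass from $2\grs$ to $k(k+1)$ moments, incurring exactly the loss $\Del=(\tfrac{1}{3}+o(1))k$. The only cosmetic difference is notation; your handling of the floor and the $o(k)$ error matches the paper's.
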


As remarked in the discussion following the statement of \cite[Theorem 1.2]{FW2013}, the work of that 
paper shows that a conclusion analogous to Theorem \ref{theorem1.5} holds with 
$\Del=(\frac{3}{2}-\sqrt{2})k^2+O(k)$. Our new result for the first time in the subject obtains a 
conclusion in which $\Del=o(k^2)$. Indeed, one now has a good asymptotic approximation to the main 
conjecture (\ref{1.3}) for all $s$, since we have
$$J_{s,k}(X)\ll X^{\Del_{s,k}}(X^s+X^{2s-\frac{1}{2}k(k+1)}),$$
with $\Del_{s,k}=O(k)$ for all $s$.\par

We are also able to obtain a very slight improvement in our previous bound for the the number of 
variables required to establish the anticipated asymptotic formula in Waring's problem, at least, when $k$ 
is sufficiently large. We refer the reader to Theorem \ref{theorem12.2} for an account of this new bound, 
and to Theorem \ref{theorem12.1} for some consequences of Theorem \ref{theorem1.3} in the context 
of Tarry's problem. Other applications of Vinogradov's mean value theorem can be found in the classical 
literature on the subject, such as \cite{ACK2004, Vin1947, Woo1992}.\par

We have written elsewhere concerning the basic strategy underlying efficient congruencing; a sketch of the 
basic ideas can be found in \cite[\S2]{Woo2012}. The multigrade efficient congruencing method seeks to 
more fully extract the information available from congruences of different levels. We direct the reader to 
\cite{Woo2014} for an introduction to such ideas. Perhaps it is worth noting that the main challenge in the 
present adaptation of multigrade efficient congruencing is that no loss of more than a constant factor can 
be tolerated in the basic congruencing steps. This task is complicated as we progress through the iteration, 
stepping from congruences modulo $p^a$ to congruences modulo $p^b$, since the ratio $b/a$ can vary 
widely according to which of the several possible moduli dominate proceedings at each stage of the 
iteration. Finally, we remark that although one can envision further improvement in the range of $s$ 
accommodated by Theorem \ref{theorem1.3}, it would seem that a new idea is required to replace the 
defect $\tfrac{1}{3}k$ by a quantity $ck$ with $c<\tfrac{1}{3}$. The term $\tfrac{1}{3}k$ seems to be 
an unavoidable consequence of the application of H\"older's inequality underlying Lemmata 
\ref{lemma7.1} and \ref{lemma11.3} that teases apart the information available from congruences of 
different levels.

\section{The basic infrastructure} We begin our discussion of the the proof of Theorem \ref{theorem1.1} 
by describing the components and basic notation that we subsequently assemble into the apparatus 
required for the multigrade efficient congruencing method. This is very similar to that introduced in our 
recent work \cite{Woo2014}, and resembles the infrastructure of our earliest work on efficient 
congruencing \cite{Woo2012}. The reader might wonder whether some kind of universal account could be 
given that would encompass all possibilities. For the present such a goal seems premature, since new 
innovations are the rule for such a rapidly evolving circle of ideas. Instead, novel consequences of the 
method seem to require careful arrangement of parameters together with a significant measure of artistry.\par

We consider $k$ to be fixed, and abbreviate $J_{s,k}(X)$ to $J_s(X)$ without further comment. Let 
$s\in \dbN$ be arbitrary, and define $\lam_s^*\in \dbR$ by means of the relation
$$\lam_s^*=\underset{X\rightarrow \infty}{\lim \sup}\frac{\log J_s(X)}{\log X}.$$
Thus, for each $\eps>0$, and any $X\in \dbR$ sufficiently large in terms of $s$, $k$ and $\eps$, one has 
$J_s(X)\ll X^{\lam_s^*+\eps}$. Note that the lower bound (\ref{1.4}) combines with a trivial estimate for 
$J_s(X)$ to show that $s\le \lam_s^*\le 2s$, whilst the conjectured upper bound (\ref{1.2}) implies that
$\lam_s^*=s$ for $s\le \frac{1}{2}k(k+1)$.\par

Next we recall some standard notational conventions. The letters $s$ and $k$ denote natural numbers with 
$k\ge 3$, and $\eps$ denotes a sufficiently small positive number. Our basic parameter is $X$, a large real 
number depending at most on $k$, $s$ and $\eps$, unless otherwise indicated. Whenever $\eps$ appears 
in a statement, we assert that the statement holds for each $\eps>0$. As usual, we write 
$\lfloor\psi\rfloor$ to denote the largest integer no larger than $\psi$, and $\lceil \psi\rceil$ to denote the 
least integer no smaller than $\psi$. We make sweeping use of vector notation. Thus, with $t$ implied 
from the environment at hand, we write $\bfz\equiv \bfw\pmod{p}$ to denote that 
$z_i\equiv w_i\pmod{p}$ $(1\le i\le t)$, or $\bfz\equiv \xi\pmod{p}$ to denote that 
$z_i\equiv \xi\pmod{p}$ $(1\le i\le t)$, or $[\bfz\mmod{q}]$ to denote the $t$-tuple 
$(\zet_1,\ldots ,\zet_t)$, where for $1\le i\le t$ one has $1\le \zet_i\le q$ and $z_i\equiv \zet_i\mmod{q}$. 
Finally, we employ the convention that whenever $G:[0,1)^k\rightarrow \dbC$ is integrable, then
$$\oint G(\bfalp)\d\bfalp =\int_{[0,1)^k}G(\bfalp)\d\bfalp .$$
Thus, on writing
$$f_k(\bfalp;X)=\sum_{1\le x\le X}e(\alp_1x+\alp_2x^2+\ldots +\alp_kx^k),$$
where as usual $e(z)$ denotes $e^{2\pi iz}$, it follows from orthogonality that
\begin{equation}\label{2.1}
J_t(X)=\oint |f_k(\bfalp;X)|^{2t}\d\bfalp \quad (t\in \dbN).
\end{equation}

\par We next introduce the parameters which define the iterative method that is the central concern of this 
paper. We suppose throughout that $k\ge 4$, and put
\begin{equation}\label{2.2}
r_0=k-\lceil 2\sqrt{k}\rceil +2.
\end{equation}
We take $r$ to be an integral parameter satisfying $1\le r\le r_0$, and we consider a natural number $s$ 
with $k+1\le s\le \tfrac{1}{2}k(k+1)$. We then define $\Del=\Del(r,s)$ by taking
\begin{equation}\label{2.3}
\Del=\sum_{m=1}^{r-1}\frac{m(r-m)}{s-m},
\end{equation}
and suppose in what follows that
\begin{equation}\label{2.4}
s\ge \max\{ 2r-1,\tfrac{1}{2}r(r-1)+\Del \}.
\end{equation}
Next, we put
\begin{equation}\label{2.5}
\gra=kr-\tfrac{1}{2}r(r-1)-\Del\quad \text{and}\quad \grb=r\left( \tfrac{1}{2}r(r-1)+\Del\right) ,
\end{equation}
and then define
\begin{equation}\label{2.6}
\tet_+=\tfrac{1}{2}(\gra+\sqrt{\gra^2-4\grb})\quad \text{and}\quad 
\tet_-=\tfrac{1}{2}(\gra-\sqrt{\gra^2-4\grb}).
\end{equation}
We will suppose throughout that $s<\tet_+$. Here, some explanation may defuse confusion that may arise 
from the implicit dependence of $\tet_+$ on $s$. In practice, the definition (\ref{2.3}) ensures that $\Del$ 
is no larger than about $\tfrac{1}{3}k$, and hence one finds from (\ref{2.6}) that $\tet_+$ is roughly 
$$kr-\tfrac{1}{2}r(r+1)-\Del>\tfrac{1}{2}r(r-1)+\Del.$$
The condition $s<\tet_+$ is consequently easily verified, since there are relatively few values of $s$ to check, 
and in particular none exceeding $\tfrac{1}{2}k(k+1)$. Likewise, the condition (\ref{2.4}) is also easily 
verified for the values of $s$ open to discussion.\par

\par Our goal is to establish that $\lam^*_{s+r}=s+r$. Having established the latter, one finds by 
applying H\"older's inequality to the right hand side of (\ref{2.1}) that whenever $1\le t\le s+r$, one has
$$J_t(X)\le \Bigl( \oint |f_k(\bfalp;X)|^{2s+2r}\d\bfalp \Bigr)^{t/(s+r)}\ll X^{t+\eps},$$
whence $\lam_t^*=t$. Thus it is that the main conclusions of this paper follow by fucusing on 
$\lam_{s+r}^*$. Henceforth, for brevity we write $\lam=\lam^*_{s+r}$. \par

\par Let $R$ be a natural number sufficiently large in terms of $s$ and $k$. Specifically, we choose $R$ as 
follows. From (\ref{2.5}) and (\ref{2.6}), one finds that $\tet_+\tet_-=\grb$, and so the hypothesis 
$s<\tet_+$ ensures that
$$\tet_-=\grb/\tet_+<\grb/s\le \grb/(\tfrac{1}{2}r(r-1)+\Del)=r.$$
Observe next that $\tet_+>\tet_-$, so that on writing $\tau=\tet_+/\tet_-$, one finds that
\begin{align*}
\frac{1}{\tet_-^R}\Bigl( \frac{\tet_+^{R+1}-\tet_-^{R+1}}{\tet_+-\tet_-}-\frac{\tet_+\tet_-}{r\sqrt{k}}
\Bigl( \frac{\tet_+^R-\tet_-^R}{\tet_+-\tet_-}\Bigr)\Bigr)&=\frac{\tau^{R+1}-1}{\tau-1}-\frac{\tau \tet_-}{r\sqrt{k}}
\Bigl( \frac{\tau^R-1}{\tau-1}\Bigr) \\
&> \Bigl( 1-\frac{1}{\sqrt{k}}\Bigr)\Bigl( \frac{\tau^{R+1}-1}{\tau-1}\Bigr) \\
&>(1-1/\sqrt{k})\tau^R.
\end{align*}
Hence we obtain the upper bound
$$(1-1/\sqrt{k})\tet_+^R<\frac{\tet_+^{R+1}-\tet_-^{R+1}}{\tet_+-\tet_-}-\frac{\tet_+\tet_-}{r\sqrt{k}}
\Bigl( \frac{\tet_+^R-\tet_-^R}{\tet_+-\tet_-}\Bigr).$$
Recall again our assumption that $s<\tet_+$, and put $\nu=\tet_+-s>0$. Then we have
$$s^n=\tet_+^n(1-\nu/\tet_+)^n\le \tet_+^ne^{-\nu n/\tet_+}.$$
Consequently, if we take $R=\lceil W\tet_+/\nu\rceil$, with $W$ a large enough integer, then we ensure 
that
$$s^R\le e^{-W}\tet_+^R<(1-1/\sqrt{k})\tet_+^R,$$
whence
\begin{equation}\label{2.7}
s^R<\frac{\tet_+^{R+1}-\tet_-^{R+1}}{\tet_+-\tet_-}-\frac{\tet_+\tet_-}{r\sqrt{k}}\left( 
\frac{\tet_+^R-\tet_-^R}{\tet_+-\tet_-}\right).
\end{equation}
The significance of this condition will become apparent in due course (see equation (\ref{8.1}) 
below, and the ensuing discussion).\par

Having fixed $R$ satisfying these conditions, we take $N$ to be a natural number sufficiently 
large in terms of $s$, $k$ and $R$, and then put
\begin{equation}\label{2.8}
B=Nk^N,\quad \tet=(17N^2(s+k))^{-3RN},\quad \del=(1000N^4(s+k))^{-3RN}\tet. 
\end{equation}
In view of the definition of $\lam$, there exists a sequence of natural numbers $(X_l)_{l=1}^\infty$, 
tending to infinity, with the property that $J_{s+r}(X_l)>X_l^{\lam-\del}$ $(l\in \dbN)$. Also, 
provided that $X_l$ is sufficiently large, one has the corresponding upper bound 
$J_{s+r}(Y)<Y^{\lam+\del}$ for $Y\ge X_l^{1/2}$. We consider a fixed element $X=X_l$ of the 
sequence $(X_l)_{l=1}^\infty$, which we may assume to be sufficiently large in terms of $s$, $k$ 
and $N$. We put $M=X^\tet$, and note from (\ref{2.8}) that $X^\del<M^{1/N}$. Throughout, 
implicit constants may depend on $s$, $k$, $N$, and also on $\eps$ in view of our earlier 
convention, but not on any other variable.\par

We next introduce the cast of exponential sums and mean values appearing in our arguments. Let $p$ be a 
prime number with $M<p\le 2M$ to be fixed in due course. When $c$ and $\xi$ are non-negative integers, 
and $\bfalp \in [0,1)^k$, we define
\begin{equation}\label{2.9}
\grf_c(\bfalp;\xi)=\sum_{\substack{1\le x\le X\\ x\equiv \xi\mmod{p^c}}}e(\alp_1x+\alp_2x^2+\ldots 
+\alp_kx^k).
\end{equation}
When $1\le m\le k-1$, denote by $\Xi_c^m(\xi)$ the set of integral $m$-tuples $(\xi_1,\ldots ,\xi_m)$, 
with $1\le \bfxi\le p^{c+1}$ and $\bfxi\equiv \xi\pmod{p^c}$, and satisfying the property that 
$\xi_i\not \equiv \xi_j\pmod{p^{c+1}}$ for $i\ne j$. We then put
\begin{equation}\label{2.10}
\grF_c^m(\bfalp;\xi)=\sum_{\bfxi\in \Xi_c^m(\xi)}\prod_{i=1}^m\grf_{c+1}(\bfalp;\xi_i).
\end{equation}
When $a$ and $b$ are positive integers, we define
\begin{align}
I_{a,b}^{m,r}(X;\xi,\eta)&=\oint |\grF_a^m(\bfalp;\xi)^2\grf_b(\bfalp;\eta)^{2s+2r-2m}|\d\bfalp ,
\label{2.11}\\
K_{a,b}^{m,r}(X;\xi,\eta)&=\oint |\grF_a^m(\bfalp;\xi)^2\grF_b^r(\bfalp;\eta)^2
\grf_b(\bfalp;\eta)^{2s-2m}|\d \bfalp .\label{2.12}
\end{align}

\par Note that, by orthogonality, the mean value $I_{a,b}^{m,r}(X;\xi,\eta)$ counts the number of 
integral solutions of the system
\begin{equation}\label{2.13}
\sum_{i=1}^m(x_i^j-y_i^j)=\sum_{l=1}^{s+r-m}(v_l^j-w_l^j)\quad (1\le j\le k),
\end{equation}
with $1\le \bfx,\bfy,\bfv,\bfw\le X$, $\bfv\equiv \bfw\equiv \eta \mmod{p^b}$, and
$\bfx,\bfy \in \Xi_a^m(\xi)\mmod{p^{a+1}}$. Similarly, the mean value $K_{a,b}^{m,r}(X;\xi,\eta)$ 
counts the number of solutions of
\begin{equation}\label{2.14}
\sum_{i=1}^m(x_i^j-y_i^j)=\sum_{l=1}^r(u_l^j-v_l^j)+\sum_{n=1}^{s-m}(w_n^j-z_n^j)
\quad (1\le j\le k),
\end{equation}
with $1\le \bfx,\bfy,\bfu,\bfv,\bfw,\bfz\le X$, and satisfying $\bfw\equiv \bfz\equiv \eta\mmod{p^b}$,
$$\bfx,\bfy \in \Xi_a^m(\xi)\mmod{p^{a+1}}\quad \text{and}\quad \bfu,\bfv\in 
\Xi_b^r(\eta)\mmod{p^{b+1}}.$$
Given any one such solution of the system (\ref{2.14}), an application of the Binomial Theorem shows 
that $\bfx-\eta$, $\bfy-\eta$, $\bfu-\eta$, $\bfv-\eta$, $\bfw-\eta$, $\bfz-\eta$ is also a solution. Since in 
any solution counted by $K_{a,b}^{m,r}(X;\xi,\eta)$, one has $\bfu\equiv \bfv\equiv \eta \mmod{p^b}$ 
and $\bfw\equiv \bfz\equiv \eta \mmod{p^b}$, we deduce that
\begin{equation}\label{2.15}
\sum_{i=1}^m(x_i-\eta)^j\equiv \sum_{i=1}^m(y_i-\eta)^j\mmod{p^{jb}}\quad (1\le j\le k).
\end{equation}

\par We put
\begin{align}
I_{a,b}^{m,r}(X)&=\max_{1\le \xi\le p^a}
\max_{\substack{1\le \eta\le p^b\\ \eta\not\equiv \xi\mmod{p}}}I_{a,b}^{m,r}(X;\xi,\eta),\label{2.16}\\
K_{a,b}^{m,r}(X)&=\max_{1\le \xi\le p^a}
\max_{\substack{1\le \eta\le p^b\\ \eta\not\equiv \xi\mmod{p}}}K_{a,b}^{m,r}(X;\xi,\eta).\label{2.17}
\end{align}
The implicit dependence on $p$ in the above notation will be rendered irrelevant in \S5, since we fix the 
choice of this prime following the proof of Lemma \ref{lemma5.1}.\par

We must align the definition of $K_{0,b}^{m,r}(X)$ with the conditioning idea. When $\xi$ is an integer 
and $\bfzet$ is a tuple of integers, we denote by $\Xi^m(\bfzet)$ the set of $m$-tuples 
$(\xi_1,\ldots ,\xi_m)\in \Xi_0^m(0)$ such that $\xi_i\not\equiv \zet_j\mmod{p}$ for all $i$ and $j$. 
Recalling (\ref{2.9}), we put
$$\grF^m(\bfalp;\bfzet)=\sum_{\bfxi \in \Xi^m(\bfzet)}\prod_{i=1}^m\grf_1(\bfalp;\xi_i),$$
and then define
\begin{align}
\Itil_c^{m,r}(X;\eta)&=\oint |\grF^m(\bfalp;\eta)^2\grf_c(\bfalp;\eta)^{2s+2r-2m}|\d\bfalp ,
\label{2.18}\\
\Ktil_c^{m,r}(X;\eta)&=\oint |\grF^m(\bfalp;\eta)^2\grF_c^r(\bfalp;\eta)^2\grf_c(\bfalp;\eta)^{2s-2m}|
\d\bfalp ,\label{2.19}\\
K_{0,c}^{m,r}(X)&=\max_{1\le \eta\le p^c}\Ktil_c^{m,r}(X;\eta).\label{2.20}
\end{align}

\par As in our earlier work, we make use of an operator that makes transparent the relationship between 
mean values and their anticipated magnitudes. For the purposes at hand, it is useful to reconfigure this 
normalisation so as to make visible the deviation in a mean value from strongly diagonal behaviour. Thus, 
in the present circumstances, we adopt the convention that
\begin{align}
\llbracket J_{s+r}(X)\rrbracket&=J_{s+r}(X)/X^{s+r},\label{2.21}\\
\llbracket I_{a,b}^{m,r}(X)\rrbracket&=\frac{I_{a,b}^{m,r}(X)}{(X/M^a)^m(X/M^b)^{s+r-m}},
\label{2.22}\\
\llbracket K_{a,b}^{m,r}(X)\rrbracket&=\frac{K_{a,b}^{m,r}(X)}{(X/M^a)^m(X/M^b)^{s+r-m}}.
\label{2.23}
\end{align}
Using this notation, our earlier bounds for $J_{s+r}(X)$ may be rewritten in the form
\begin{equation}\label{2.24}
\llbracket J_{s+r}(X)\rrbracket >X^{\Lam-\del}\quad \text{and}\quad \llbracket 
J_{s+r}(Y)\rrbracket<Y^{\Lam+\del}\quad (Y\ge X^{1/2}),
\end{equation}
where $\Lam$ is defined by $\Lam=\lam-(s+r)$.\par

Finally, we recall two simple estimates associated with the system (\ref{1.1}).

\begin{lemma}\label{lemma2.1}
Suppose that $c$ is a non-negative integer with $c\tet\le 1$. Then for each natural number $u$, one has
$$\max_{1\le \xi\le p^c}\oint |\grf_c(\bfalp;\xi)|^{2u}\d\bfalp \ll_u J_u(X/M^c).$$
\end{lemma}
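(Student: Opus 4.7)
The plan is to reduce $\grf_c(\bfalp;\xi)$ to a standard Weyl sum of length $\ll X/M^c$ via the change of summation variable $x = \xi + p^c y$ in (\ref{2.9}). If $p^c > X$ then $\grf_c(\bfalp;\xi)$ contains at most one term and the bound is trivial; otherwise $y$ ranges over an interval of $Y$ consecutive integers, where $Y \ll X/p^c \le X/M^c$ (using $p>M$), and the hypothesis $c\tet\le 1$ ensures $X/M^c \ge 1$. Expanding via the Binomial Theorem, one finds
$$\sum_{j=1}^k \alp_j(\xi + p^c y)^j = C(\bfalp,\xi) + \sum_{i=1}^k \bet_i\, y^i,$$
where $C(\bfalp,\xi)$ is independent of $y$ and
$$\bet_i = p^{ic} \sum_{j=i}^k \binom{j}{i} \xi^{j-i} \alp_j \qquad (1 \le i \le k).$$
Factoring out the constant phase $e(C)$ and translating the index of summation (which amounts to composing with a triangular linear map of unit determinant on the torus, and so leaves any integral invariant) identifies $|\grf_c(\bfalp;\xi)|$ with $|f_k(\bfbet;Y)|$, where on the right $f_k$ is understood to have length $Y$ in place of $X$.

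Next, I would exploit the outer change of variables $T : \bfalp \mapsto \bfbet$. This map is upper triangular with diagonal entries $p^{c},p^{2c},\ldots,p^{kc}$, so $|\det T| = p^{ck(k+1)/2}$ is a positive integer. Because $|f_k(\bfbet;Y)|^{2u}$ has period $1$ in each coordinate $\bet_i$, the classical unfolding argument --- tiling $T([0,1)^k)$ by $|\det T|$ unit translates of the fundamental cube and then invoking periodicity --- yields
$$\oint |\grf_c(\bfalp;\xi)|^{2u}\d\bfalp = \oint |f_k(\bfbet;Y)|^{2u}\d\bfbet = J_u(Y),$$
the last equality being the orthogonality identity (\ref{2.1}). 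Monotonicity of $J_u$ in its length parameter then delivers $J_u(Y) \ll_u J_u(X/M^c)$, as required. Since the maximum over $\xi \in [1,p^c]$ does not interact with the integral, taking the maximum at the end produces the stated bound.

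The only step that requires a little care is the torus change-of-variables argument, but it is classical and hinges solely on the triangular structure of $T$ together with the periodicity of the integrand, so no genuine obstacle is anticipated; the entire proof is a straightforward dilation-and-translation to the canonical Vinogradov integral.
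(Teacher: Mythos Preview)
Your proposal is correct and reproduces exactly the standard argument underlying \cite[Lemma 3.1]{Woo2012}, which is all the paper invokes. The substitution $x=\xi+p^cy$, the binomial expansion producing the upper-triangular integer change of phase variables, and the unfolding of the torus integral against a $\dbZ^k$-periodic integrand are precisely the ingredients used there; the residual issue of the interval length being $\lfloor (X-\xi)/p^c\rfloor+1$ rather than exactly $X/M^c$ is handled, as you indicate, by the trivial monotonicity and dyadic stability of $J_u$, and is absorbed into the $\ll_u$.
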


\begin{proof} This is \cite[Lemma 3.1]{Woo2012}.
\end{proof}

\begin{lemma}\label{lemma2.2}
Suppose that $c$ and $d$ are non-negative integers with $c\le \tet^{-1}$ and $d\le \tet^{-1}$. Then 
whenever $u,v\in \dbN$ and $\xi,\zet\in \dbZ$, one has
$$\oint |\grf_c(\bfalp;\xi)^{2u}\grf_d(\bfalp;\zet)^{2v}|\d\bfalp \ll_{u,v}(J_{u+v}(X/M^c))^{u/(u+v)}
(J_{u+v}(X/M^d))^{v/(u+v)}.$$
\end{lemma}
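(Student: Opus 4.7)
The plan is to deduce this bound as a direct two-step application of H\"older's inequality followed by Lemma \ref{lemma2.1}. The key observation is that the integrand is a product of pure powers of $|\grf_c(\bfalp;\xi)|$ and $|\grf_d(\bfalp;\zet)|$, so we may separate them by a single application of H\"older with conjugate exponents $(u+v)/u$ and $(u+v)/v$, obtaining
\begin{equation*}
\oint |\grf_c(\bfalp;\xi)^{2u}\grf_d(\bfalp;\zet)^{2v}|\d\bfalp
\le \Bigl( \oint |\grf_c(\bfalp;\xi)|^{2(u+v)}\d\bfalp\Bigr)^{u/(u+v)}
\Bigl( \oint |\grf_d(\bfalp;\zet)|^{2(u+v)}\d\bfalp\Bigr)^{v/(u+v)}.
\end{equation*}

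At this point the hypotheses $c\tet\le 1$ and $d\tet\le 1$ are precisely what is required to invoke Lemma \ref{lemma2.1} on each factor with parameter $u+v$. That lemma yields
$$\oint |\grf_c(\bfalp;\xi)|^{2(u+v)}\d\bfalp \ll_{u,v} J_{u+v}(X/M^c)$$
and the analogous bound for the $d$-factor, and substituting these back gives exactly the conclusion, with the implicit constant depending only on $u$ and $v$.

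There is really no substantive obstacle here: once one recognises that the correct H\"older split is the symmetric one that produces the common exponent $2(u+v)$ in both resulting factors, the two inequalities combine verbatim. The only mild subtlety is remembering that the suprema over $\xi$ and $\zet$ implicit in Lemma \ref{lemma2.1} trivially majorise the fixed residues appearing on the left, so no extra care is needed in passing from the lemma to the stated bound.
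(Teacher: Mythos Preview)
Your argument is correct and is exactly the standard proof: H\"older with exponents $(u+v)/u$ and $(u+v)/v$ followed by Lemma~\ref{lemma2.1} on each factor. The paper itself does not reprove this lemma but simply cites it as \cite[Corollary~2.2]{FW2013}, where the same H\"older-then-Lemma~\ref{lemma2.1} argument is carried out.
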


\begin{proof} This is \cite[Corollary 2.2]{FW2013}.
\end{proof}

\section{Auxiliary systems of congruences} Our goal of establishing strongly diagonal behaviour dictates 
that we must establish essentially diagonal behaviour for the solutions of auxiliary congruences central to 
our methods. Complicating our discussion in present circumstances is the potential for the ratio $b/a$ to 
be small, for in earlier work \cite{FW2013, Woo2013} this was assumed to be at least $\frac{1}{2}(k-1)$ 
or thereabouts. We begin by adjusting our notation to accommodate the demands of this paper. When $a$ 
and $b$ are integers with $1\le a<b$, we denote by $\calB_{a,b}^n(\bfm;\xi,\eta)$ the set of solutions of 
the system of congruences
\begin{equation}\label{3.1}
\sum_{i=1}^n(z_i-\eta)^j\equiv m_j\mmod{p^{jb}}\quad (1\le j\le k),
\end{equation}
with $1\le \bfz\le p^{kb}$ and $\bfz\equiv \bfxi\pmod{p^{a+1}}$ for some $\bfxi\in \Xi_a^n(\xi)$. We 
define an equivalence relation $\calR(\lam)$ on integral $n$-tuples by declaring $\bfx$ and $\bfy$ to be 
$\calR(\lam)$-equivalent when $\bfx\equiv \bfy\pmod{p^\lam}$. We then write 
$\calC_{a,b}^{n,h}(\bfm;\xi,\eta)$ for the set of $\calR(hb)$-equivalence classes of 
$\calB_{a,b}^n(\bfm;\xi,\eta)$, and we define $B_{a,b}^{n,h}(p)$ by putting
\begin{equation}\label{3.2}
B_{a,b}^{n,h}(p)=\max_{1\le \xi\le p^a}
\max_{\substack{1\le \eta\le p^b\\ \eta\not\equiv \xi\mmod{p}}}
\max_{1\le \bfm\le p^{kb}}\text{card}(\calC_{a,b}^{n,h}(\bfm;\xi,\eta)).
\end{equation}

\par When $a=0$ we modify these definitions, so that $\calB_{0,b}^n(\bfm;\xi,\eta)$ denotes the set of
 solutions of the system of congruences (\ref{3.1}) with $1\le \bfz\le p^{kb}$ and 
$\bfz\equiv \bfxi\mmod{p}$ for some $\bfxi\in \Xi_0^n(\xi)$, and for which in addition 
$\bfz\not\equiv \eta\mmod{p}$. As in the situation in which one has $a\ge 1$, we write 
$\calC_{0,b}^{n,h}(\bfm;\xi,\eta)$ for the set of $\calR(hb)$-equivalence classes of 
$\calB_{0,b}^n(\bfm;\xi,\eta)$, but we define $B_{0,b}^{n,h}(p)$ by putting
\begin{equation}\label{3.3}
B_{0,b}^{n,h}(p)=\max_{1\le \eta\le p^b}\max_{1\le \bfm\le p^{kb}}
\text{card}(\calC_{0,b}^{n,h}(\bfm;0,\eta)).
\end{equation}
We observe that the definition of $B_{a,b}^{n,h}(p)$ ensures that whenever $0<h'\le h$, one has
\begin{equation}\label{3.4}
B_{a,b}^{n,h'}(p)\le B_{a,b}^{n,h}(p).
\end{equation}

\begin{lemma}\label{lemma3.1}
Let $m$ be an integer with $0\le m\le k-1$, and suppose that $a$ and $b$ are integers with $0\le a<b$ 
and $(k-m)b\ge (m+1)a$. Suppose in addition that $h$ is a natural number with $a\le h\le (k-m)b-ma$. 
Then one has
$$B_{a,b}^{m+1,h/b}(p)\le k!.$$
\end{lemma}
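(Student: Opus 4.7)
The plan is to show that any two solutions in $\calB_{a,b}^{m+1}(\bfm;\xi,\eta)$ agree, up to a permutation of their $m+1$ coordinates, modulo $p^h$. Since $|S_{m+1}|=(m+1)!\le k!$, this bounds the number of $\calR(h)$-equivalence classes as desired. Let $\bfz,\bfz'$ be two such solutions, and write $y_i=z_i-\eta$ and $y_i'=z_i'-\eta$. The defining conditions give $v_p(y_i)=0$, $v_p(y_i-y_j)=a$ exactly for $i\ne j$ (and likewise for $\bfy'$), while the power-sum condition yields $\Delta_l:=p_l(\bfy)-p_l(\bfy')\equiv 0\pmod{p^{lb}}$ for $1\le l\le k$. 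Set $P(T)=\prod_j(T-y_j)$.

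The crux of the argument is the estimate $v_p(P(y_i'))\ge(k-m)b$ for each $i$. Granted this, since $P(y_i')=\prod_j(y_i'-y_j)$, let $\sigma(i)$ be the index $j$ maximising $v_p(y_i'-y_j)$, with maximum value $v$. If $v>a$, then for $j\ne\sigma(i)$ the valuation of $y_i'-y_j=(y_i'-y_{\sigma(i)})+(y_{\sigma(i)}-y_j)$ equals $\min(v,a)=a$, using the valuation condition on distinct $y_j$'s. The product valuations sum to $v+ma\ge(k-m)b$, forcing $v\ge(k-m)b-ma=h$. In the alternative $v\le a$, one deduces $(k-m)b\le(m+1)a$, which combined with the hypothesis $(k-m)b\ge(m+1)a$ forces equality, so $h\le a$ and the conclusion $y_i'\equiv y_{\sigma(i)}\pmod{p^h}$ is automatic. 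Injectivity of $\sigma$ follows from $v_p(y_{i_1}-y_{i_2})=a<h$, which prevents two distinct $y_{i}$'s from lying $p^h$-close to the same $y_{\sigma(i)}'$.

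The key estimate is proved by combining Newton's identities across all levels $1\le l\le k$. For $l\le m+1$, Newton expresses $\delta_l=e_l(\bfy)-e_l(\bfy')$ recursively in terms of the $\Delta_j$; for $m+1<l\le k$ one obtains, after subtraction of the $\bfy$ and $\bfy'$ versions,
\begin{equation*}
\sum_{j=1}^{m+1}(-1)^{j-1}\delta_j\,p_{l-j}(\bfy)=\Delta_l-\sum_{j=1}^{m+1}(-1)^{j-1}e_j(\bfy')\Delta_{l-j},
\end{equation*}
whose right-hand side has $p$-adic valuation at least $(l-m-1)b$. Writing $P(y_i')=-Q(y_i')$ with $Q=P'-P=\sum_l(-1)^l\delta_l T^{m+1-l}$, a Cramer/Vandermonde inversion of the resulting linear system expresses each $P(y_i')$ as a weighted combination of the $\Delta_j$'s. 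The Vandermonde determinant contributes valuation $\binom{m+1}{2}a$ and the minors contribute $\binom{m}{2}a$, so selecting the rows with the most restrictive moduli (those of largest index $l$) produces the desired exponent $(k-m)b$. The principal technical obstacle is tracking $p$-adic valuations through this inversion when the moduli $p^{lb}$ vary with $l$, while ensuring that the Vandermonde losses are compensated by the higher-index constraints; the hypothesis $(k-m)b\ge(m+1)a$ underpins both the key estimate and the ensuing pigeonhole step.
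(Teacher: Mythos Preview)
Your approach is genuinely different from the paper's. The paper makes the change of variables $z_i=p^ay_i+\xi$, so that $z_i-\eta=\zeta(p^ay_i\zeta^{-1}+1)$ with $\zeta=\xi-\eta$ a unit; it then keeps only the top $m+1$ congruences ($k-m\le j\le k$), applies a polynomial identity from \cite{Woo2013} to convert these into a system $\sum_i\psi_j(y_i)\equiv v_j\pmod{p^{h-a}}$ with $\psi_j(z)\equiv z^{j-k+m+1}\pmod p$, and finally invokes the Hensel-type result \cite[Theorem~1]{Woo1996a} on systems with unit Jacobian to obtain $\le k!$ solutions modulo $p^{h-a}$. Your route via Newton's identities and the factorisation $P(y_i')=\prod_j(y_i'-y_j)$ aims for the stronger conclusion that solutions agree modulo $p^h$ up to a permutation, giving $(m+1)!\le k!$.

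However, there is a genuine gap at the ``key estimate'' $v_p(P(y_i'))\ge(k-m)b$. Carry out the Cramer/Vandermonde inversion you describe. The tail Newton identities give, for $t=l-m-1$ with $1\le t\le k-m-1$,
\[
\sum_{i=1}^{m+1}(y_i')^{t}\,P(y_i')=r_t,\qquad v_p(r_t)\ge tb.
\]
Selecting the $m+1$ rows with largest $t$ (namely $t=k-2m-1,\ldots,k-m-1$, which already forces $k\ge 2m+2$), the coefficient matrix has determinant of valuation $\binom{m+1}{2}a$ and each cofactor has valuation at least $\binom{m}{2}a$, exactly as you say. But then Cramer yields
\[
v_p(P(y_i'))\ \ge\ \min_{t}\bigl(tb\bigr)+\tbinom{m}{2}a-\tbinom{m+1}{2}a\ =\ (k-2m-1)b-ma,
\]
not $(k-m)b$. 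The shortfall is $(m+1)b+ma$, and with this weaker bound the subsequent pigeonhole step only gives $y_i'\equiv y_{\sigma(i)}\pmod{p^{(k-2m-1)b-2ma}}$, which is strictly weaker than the required modulus $p^h$ whenever $m\ge 1$. (For instance, with $m=1$, $a=0$ and $k=4$ one obtains $v_p(P(y_i'))\ge b$ rather than $3b$.) Your proposal therefore needs either a substantially sharper valuation analysis of the overdetermined system---using all $k-m-1$ tail equations simultaneously, or an iterative bootstrap after first establishing the result modulo $p^b$---none of which is supplied. Note too that when $m>(k-2)/2$ there are fewer than $m+1$ tail identities available, so the square system you invoke does not even exist in that range; the paper's use of only the top $m+1$ congruences $k-m\le j\le k$, together with the $p^a$-rescaling, is precisely what sidesteps both difficulties.
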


\begin{proof} We are able to follow the arguments of the proofs of \cite[Lemma 3.5]{Woo2013} and 
\cite[Lemma 3.3]{FW2013}, with some modifications. For the sake of transparency of exposition, we 
provide an essentially complete account. We begin by considering the situation with $a\ge 1$, the 
alterations required when $a=0$ being easily accommodated within our basic argument. Consider 
fixed natural numbers $a$ and $b$ with $1\le a<b$ and $(k-m)b\ge (m+1)a$, and fixed integers $\xi$ 
and $\eta$ with $1\le \xi\le p^a$, $1\le \eta\le p^b$ and $\eta\not\equiv \xi\mmod{p}$. Note that in 
view of the relation (\ref{3.4}), the conclusion of the lemma follows in general if we can establish it when 
$h=(k-m)b-ma$. We henceforth assume that the latter is the case. We then denote by $\calD_1(\bfn)$ the 
set of $\calR(h)$-equivalence classes of solutions of the system of congruences
\begin{equation}\label{3.5}
\sum_{i=1}^{m+1}(z_i-\eta)^j\equiv n_j\mmod{p^{(k-m)b}}\quad (k-m\le j\le k),
\end{equation}
with $1\le \bfz\le p^{kb}$ and $\bfz\equiv \bfxi\mmod{p^{a+1}}$ for some 
$\bfxi\in \Xi_a^{m+1}(\xi)$. To any solution $\bfz$ of (\ref{3.1}) there corresponds a unique 
$(m+1)$-tuple $(n_{k-m},\ldots ,n_k)$, with $1\le \bfn\le p^{(k-m)b}$, for which (\ref{3.5}) holds and
$$n_j\equiv m_j\mmod{p^{(k-m)b}}\quad (k-m\le m\le k).$$
Since $h/b\le k-m$, we therefore infer that
\begin{equation}\label{3.6}
\text{card}(\calC_{a,b}^{m+1,h/b}(\bfm;\xi,\eta))\le \max_{1\le \bfn\le p^{(k-m)b}}
\text{card}(\calD_1(\bfn)).
\end{equation} 

\par We fix any choice of $\bfn$ for which the maximum is achieved in (\ref{3.6}). There is plainly no 
loss of generality in supposing that $\calD_1(\bfn)$ is non-empty. Observe that for any solution $\bfz'$ of 
(\ref{3.5}) there is an $\calR(k-m)$-equivalent solution $\bfz$ satisfying $1\le \bfz\le p^{(k-m)b}$. 
Rewriting each variable $z_i$ in (\ref{3.5}) in the shape $z_i=p^ay_i+\xi$, we infer from the hypothesis 
that $\bfz\equiv \bfxi\mmod{p^{a+1}}$ for some $\bfxi\in \Xi_a^{m+1}(\xi)$ that the $(m+1)$-tuple 
$\bfy$ necessarily satisfies
\begin{equation}\label{3.7}
y_i\not\equiv y_l\mmod{p}\quad (1\le i<l\le m+1).
\end{equation}
Write $\zet=\xi-\eta$, and note that the constraint $\eta\not\equiv \xi\mmod{p}$ ensures that 
$p\nmid \zet$. We denote the multiplicative inverse of $\zet$ modulo $p^{(k-m)b}$ by $\zet^{-1}$. Then 
we deduce from (\ref{3.5}) that $\text{card}(\calD_1(\bfn))$ is bounded above by the number of 
$\calR(h-a)$-equivalence classes of solutions of the system of congruences
\begin{equation}\label{3.8}
\sum_{i=1}^{m+1}(p^ay_i\zet^{-1}+1)^j\equiv n_j(\zet^{-1})^j\mmod{p^{(k-m)b}}\quad 
(k-m\le j\le k),
\end{equation}
with $1\le \bfy\le p^{(k-m)b-a}$ satisfying (\ref{3.7}). Let $\bfy=\bfw$ be any solution of the system 
(\ref{3.8}). Then we find that all other solutions $\bfy$ satisfy the system
\begin{equation}\label{3.9}
\sum_{i=1}^{m+1} \left( (p^ay_i\zet^{-1}+1)^j-(p^aw_i\zet^{-1}+1)^j\right) \equiv 0
\mmod{p^{(k-m)b}}\quad (k-m\le j\le k).
\end{equation}

We next apply \cite[Lemma 3.2]{Woo2013}, just as in the argument of the proof of 
\cite[Lemmata 3.3 to 3.6]{Woo2013}. Consider an index $j$ with $k-m\le j\le k$, and apply the latter 
lemma with $\alp=k-m-1$ and $\bet=j-k+m+1$. We find that there exist integers $c_{j,l}$ 
$(k-m-1\le l\le j)$ and $d_{j,u}$ $(j-k+m+1\le u\le j)$, bounded in terms of $k$, and with 
$d_{j,j-k+m+1}\ne 0$, for which one has the polynomial identity
\begin{equation}\label{3.10}
c_{j,k-m-1}+\sum_{l=k-m}^jc_{j,l}(x+1)^l=\sum_{u=j-k+m+1}^jd_{j,u}x^u.
\end{equation}
Since we may assume $p$ to be large, we may suppose that $p\nmid d_{j,j-k+m+1}$. Thus, multiplying 
(\ref{3.10}) through by the multiplicative inverse of $d_{j,j-k+m+1}$ modulo $p^{(k-m)b}$, we see that 
there is no loss in supposing that
$$d_{j,j-k+m+1}\equiv 1\mmod{p^{(k-m)b}}.$$
Consequently, by taking suitable linear combinations of the congruences comprising (\ref{3.9}), we discern 
that any solution of this system satisfies
$$(\zet^{-1}p^a)^{j-k+m+1}\sum_{i=1}^{m+1}(\psi_j(y_i)-\psi_j(w_i))\equiv 0
\mmod{p^{(k-m)b}}\quad (k-m\le j\le k),$$
in which
$$\psi_j(z)=z^{j-k+m+1}+\sum_{u=j-k+m+2}^jd_{j,u}(\zet^{-1}p^a)^{u-j+k-m-1}z^u.$$
Note here that
\begin{equation}\label{3.11}
\psi_j(z)\equiv z^{j-k+m+1}\mmod{p}.
\end{equation}

\par Denote by $\calD_2(\bfu)$ the set of $\calR(h-a)$-equivalence classes of solutions of the system of 
congruences
$$\sum_{i=1}^{m+1}\psi_j(y_i)\equiv u_j\mmod{p^{(k-m)b-(j-k+m+1)a}}\quad (k-m\le j\le k),$$
with $1\le \bfy\le p^{(k-m)b-a}$ satisfying (\ref{3.7}). Then we have shown thus far that
\begin{equation}\label{3.12}
\text{card}(\calD_1(\bfn))\le \max_{1\le \bfu\le p^{(k-m)b}}\text{card}(\calD_2(\bfu)).
\end{equation}
Let $\calD_3(\bfv)$ denote the set of solutions of the congruence
$$\sum_{i=1}^{m+1}\psi_j(y_i)\equiv v_j\mmod{p^{h-a}}\quad (k-m\le j\le k),$$
with $1\le \bfy\le p^{h-a}$ satisfying (\ref{3.7}). For $k-m\le j\le k$, we have
$$(k-m)b-(j-k+m+1)a\ge (k-m)b-(m+1)a=h-a.$$
Then we arrive at the upper bound
\begin{equation}\label{3.13}
\text{card}(\calD_2(\bfu))\le \max_{1\le \bfv\le p^{h-a}}\text{card}(\calD_3(\bfv)).
\end{equation}

\par By combining (\ref{3.6}), (\ref{3.12}) and (\ref{3.13}), we discern at this point that
\begin{equation}\label{3.14}
\text{card}(\calC_{a,b}^{m+1,h/b}(\bfm;\xi,\eta))\le \max_{1\le \bfv\le p^{h-a}}
\text{card}(\calD_3(\bfv)).
\end{equation}
Define the determinant
$$J(\bfpsi;\bfx)=\det \left(\psi'_{k-m+l-1}(x_i)\right)_{1\le i,l\le m+1}.$$
In view of (\ref{3.11}), one has $\psi'_{k-m+l-1}(y_i)\equiv ly_i^{l-1}\mmod{p}$. It follows 
from (\ref{3.7}) that
$$\det (y_i^{l-1})_{1\le i,l\le m+1}=\prod_{1\le i<u\le m+1}(y_i-y_u)\not\equiv 0\mmod{p},$$
so that, since $p>k$, we have $(J(\bfpsi;\bfy),p)=1$. We therefore deduce from 
\cite[Theorem 1]{Woo1996a}, just as in the corresponding argument of the proof of 
\cite[Lemma 3.3]{Woo2013} following \cite[equation (3.17)]{Woo2013}, that
$$\text{card}(\calD_3(\bfv))\le (k-m)(k-m+1)\ldots k\le k!,$$
and thus the conclusion of the lemma when $a\ge 1$ follows at once from (\ref{3.2}) and (\ref{3.14}).

\par When $a=0$ we must apply some minor modifications to the above argument. In this case, we 
denote by $\calD_1(\bfn;\eta)$ the set of $\calR(h)$-equivalence classes of solutions of the system of 
congruences (\ref{3.5}) with $1\le \bfz\le p^{kb}$ and $\bfz\equiv \bfxi\mmod{p}$ for some 
$\bfxi\in \Xi_0^{m+1}(0)$, and for which in addition $z_i\not \equiv \eta\mmod{p}$ for 
$1\le i\le m+1$. Then as in the opening paragraph of our proof, it follows from (\ref{3.1}) that
\begin{equation}\label{3.15}
\text{card}(\calC_{0,b}^{m+1,h/b}(\bfm;0,\eta))\le \max_{1\le \bfn\le p^{(k-m)b}}
\text{card}(\calD_1(\bfn;\eta)).
\end{equation}
But $\text{card}(\calD_1(\bfn;\eta))=\text{card}(\calD_1(\bfn;0))$, and $\text{card}(\calD_1(\bfn;0))$ 
counts the solutions of the system of congruences
$$\sum_{i=1}^{m+1}y_i^j\equiv n_j\mmod{p^{(k-m)b}}\quad (k-m\le j\le k),$$
with $1\le \bfy\le p^{(k-m)b}$ satisfying (\ref{3.7}), and in addition $p\nmid y_i$ $(1\le i\le m+1)$. 
Write
$$J(\bfy)=\det \left( (k-m+j-1)y_i^{k-m+j-2}\right)_{1\le i,j\le m+1}.$$
Then, since $p>k$, we have
$$J(\bfy)=\frac{k!}{(k-m-1)!}(y_1\ldots y_{m+1})^{k-m-1}\prod_{1\le i<j\le m+1}(y_i-y_j)
\not \equiv 0\mmod{p}.$$
We therefore conclude from \cite[Theorem 1]{Woo1996a} that
$$\text{card}(\calD_1(\bfn;0))\le (k-m)(k-m+1)\ldots k\le k!.$$
In view of (\ref{3.3}), the conclusion of the lemma therefore follows from (\ref{3.15}) when $a=0$. 
This completes our account of the proof of the lemma.
\end{proof}
 
\section{The conditioning process} The mean value $K_{a,b}^{m,r}(X;\xi,\eta)$ differs only slightly from 
the special case $K_{a,b}^{k-1,k-1}(X;\xi,\eta)$ considered in \cite{Woo2014}, and thus our treatment of 
the conditioning process may be swiftly executed.

\begin{lemma}\label{lemma4.1} Let $a$ and $b$ be integers with $b>a\ge 1$. Then whenever $s\ge 2r-1$, 
one has
$$I_{a,b}^{r,r}(X)\ll K_{a,b}^{r,r}(X)+M^{2s/3}I_{a,b+1}^{r,r}(X).$$
\end{lemma}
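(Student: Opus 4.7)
The plan is to execute the standard conditioning step of the efficient congruencing method, in a guise closely mirroring the treatment of $K_{a,b}^{k-1,k-1}$ in \cite{Woo2014}. The objective is to decompose $|\grf_b(\bfalp;\eta)|^{2s}$ into a well-conditioned part that is dominated by $K_{a,b}^{r,r}(X;\xi,\eta)$, together with an error accessible through H\"older's inequality.

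First, I would refine $\grf_b$ at the next level of $p$-adic precision via
$$\grf_b(\bfalp;\eta)=\sum_\zet\grf_{b+1}(\bfalp;\zet),$$
summed over the $p$ residues $\zet\in[1,p^{b+1}]$ with $\zet\equiv\eta\pmod{p^b}$. Raising to the $r$th power and separating the ordered tuples of distinct residues (which produce $\grF_b^r(\bfalp;\eta)$ by the definition of $\Xi_b^r(\eta)$) from the remaining $O(p^{r-1})$ tuples exhibiting a coincidence yields the identity
$$\grf_b(\bfalp;\eta)^r=\grF_b^r(\bfalp;\eta)+E(\bfalp;\eta),$$
in which each summand of $E$ contains a forced squared factor $\grf_{b+1}(\bfalp;\zet)^2$ accompanied by residual factors amounting in size to at most $|\grf_b(\bfalp;\eta)|^{r-2}$ (after summation over the remaining free residues).

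Squaring this identity and applying the elementary inequality $|z_1+z_2|^2\le 2|z_1|^2+2|z_2|^2$ gives the pointwise bound
$$|\grf_b(\bfalp;\eta)|^{2s}\le 2|\grF_b^r(\bfalp;\eta)|^2|\grf_b(\bfalp;\eta)|^{2s-2r}+2|E(\bfalp;\eta)|^2|\grf_b(\bfalp;\eta)|^{2s-2r}.$$
Integrating against $|\grF_a^r(\bfalp;\xi)|^2$ then produces
$$I_{a,b}^{r,r}(X;\xi,\eta)\le 2K_{a,b}^{r,r}(X;\xi,\eta)+2\oint|\grF_a^r(\bfalp;\xi)|^2|E(\bfalp;\eta)|^2|\grf_b(\bfalp;\eta)|^{2s-2r}\,{\rm d}\bfalp,$$
so that the lemma reduces to bounding the residual error integral by $M^{2s/3}I_{a,b+1}^{r,r}(X)$.

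The hard part is precisely this residual bound. I would begin by estimating $|E|^2$ pointwise via the coincidence structure: the combinatorial count of $O(p^{r-1})$ bad tuples, each extracting a factor $\grf_{b+1}(\bfalp;\zet)^2$, supplies an $M^2$-type loss together with a maximal factor $\max_\zet|\grf_{b+1}(\bfalp;\zet)|^4$ and a residual $|\grf_b|^{2r-4}$. The resulting integrand $|\grF_a^r|^2|\grf_{b+1}(\bfalp;\zet^*)|^4|\grf_b|^{2s-4}$ is then to be handled by a carefully chosen H\"older's inequality partitioning the contribution between $I_{a,b+1}^{r,r}(X)$ (absorbing the $\grf_{b+1}$ factors) and $I_{a,b}^{r,r}(X)$ (absorbing the $\grf_b$-tail), after which the resulting self-referential inequality is solved for $I_{a,b}^{r,r}(X)$. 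Exploiting the hypothesis $s\ge 2r-1$, which ensures that there is enough headroom in the powers of $\grf_b$ and $\grf_{b+1}$ for the H\"older split, together with the trivial bound $|\grf_{b+1}(\bfalp;\zet)|\ll X/M^{b+1}$, should yield the advertised loss $M^{2s/3}$. The principal technical challenge is the precise calibration of the H\"older exponents so that the accumulated powers of $M$ collapse to $2s/3$ rather than the cruder $M^s$ or $M^{2s-2}$ yielded by off-the-shelf applications.
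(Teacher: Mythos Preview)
Your pointwise decomposition $\grf_b(\bfalp;\eta)^r=\grF_b^r(\bfalp;\eta)+E(\bfalp;\eta)$, followed by squaring and integrating, does cleanly produce the $K_{a,b}^{r,r}(X)$ main term, but it cannot deliver the exponent $M^{2s/3}$ on the error. The difficulty is structural: a coincidence inside an $r$-tuple extracts only a factor $|\grf_{b+1}(\bfalp;\zet)|^2$, so that $|E|^2|\grf_b|^{2s-2r}$ is governed by $\sum_{\zet,\zet'}|\grf_{b+1}(\bfalp;\zet)|^2|\grf_{b+1}(\bfalp;\zet')|^2|\grf_b|^{2s-4}$. After H\"older with weight $2/s$ on each $\grf_{b+1}$ factor and summing over the $O(M^2)$ pairs $(\zet,\zet')$, the self-referential inequality resolves to $I_{a,b}^{r,r}(X)\ll K_{a,b}^{r,r}(X)+M^{s}I_{a,b+1}^{r,r}(X)$, which is exactly the ``cruder $M^s$'' you flag and which is fatal for the subsequent conditioning lemma (the saving there relies on $2/3<1$). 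The trivial bound $|\grf_{b+1}|\ll X/M^{b+1}$ does not rescue this, and the hypothesis $s\ge 2r-1$ plays no genuine role in your scheme beyond ensuring $2s-2r\ge 0$.

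The paper's argument does \emph{not} decompose $\grf_b^r$ pointwise. Instead it splits the Diophantine count $I_{a,b}^{r,r}(X;\xi,\eta)=T_1+T_2$ according to whether the full $s$-tuple $(v_1,\ldots,v_s)$ occupies at least $r$ residue classes modulo $p^{b+1}$, or at most $r-1$. In the first case one selects $r$ of the $v_i$ in distinct classes and applies Schwarz's inequality to obtain $T_1\ll(K_{a,b}^{r,r}I_{a,b}^{r,r})^{1/2}$. In the second case the hypothesis $s\ge 2r-1$ enters via pigeonhole: distributing $s$ variables among at most $r-1$ classes forces some class $\zet$ to contain at least three of them. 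This yields a factor $\grf_{b+1}(\bfalp;\zet)^3$ on one side of the mean value only, and H\"older with weight $3/(2s)$ together with a single sum over $\zet$ gives $T_2\ll M(I_{a,b}^{r,r})^{1-3/(2s)}(I_{a,b+1}^{r,r})^{3/(2s)}$. Solving the resulting inequality then produces precisely $M^{2s/3}$. The missing idea in your proposal is this combinatorial $T_1/T_2$ split at the level of all $s$ variables, which is where $s\ge 2r-1$ genuinely enters and which is what makes the triple coincidence available.
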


\begin{proof} Our argument follows the proof of \cite[Lemma 4.1]{Woo2014} with minor adjustments. 
Consider fixed integers $\xi$ and $\eta$ with $\eta\not\equiv \xi\mmod{p}$. Let $T_1$ denote the 
number of integral solutions $\bfx$, $\bfy$, $\bfv$, $\bfw$ of the system (\ref{2.13}) counted by 
$I_{a,b}^{r,r}(X;\xi,\eta)$ in which $v_1,\ldots,v_s$ together occupy at least $r$ distinct residue 
classes modulo $p^{b+1}$, and let $T_2$ denote the corresponding number of solutions in which 
these integers together occupy at most $r-1$ distinct residue classes modulo $p^{b+1}$. Then
\begin{equation}\label{4.1}
I_{a,b}^{r,r}(X;\xi,\eta)=T_1+T_2.
\end{equation}

\par From (\ref{2.10}) to (\ref{2.12}), orthogonality and Schwarz's inequality, one has
\begin{align}
T_1&\le \binom{s}{r}\oint |\grF_a^r(\bfalp;\xi)|^2\grF_b^r(\bfalp;\eta)
\grf_b(\bfalp;\eta)^{s-r}\grf_b(-\bfalp;\eta)^s\d\bfalp \notag \\
&\ll \left( K_{a,b}^{r,r}(X;\xi,\eta)\right)^{1/2}\left( I_{a,b}^{r,r}(X;\xi,\eta)\right)^{1/2}.\label{4.2}
\end{align}
In order to treat $T_2$, we observe first that the hypothesis $s\ge 2r-1$ ensures that there is an integer 
$\zet\equiv \eta \mmod{p^b}$ having the property that three at least of the variables $v_1,\ldots ,v_s$ 
are congruent to $\zet$ modulo $p^{b+1}$. Hence, recalling the definitions (\ref{2.10}) and 
(\ref{2.11}), one finds from orthogonality and H\"older's inequality that
\begin{align}
T_2&\le \binom{s}{3}\sum_{\substack{1\le \zet\le p^{b+1}\\ \zet\equiv \eta\mmod{p^b}}}
\oint |\grF_a^r(\bfalp;\xi)|^2\grf_{b+1}(\bfalp;\zet)^3\grf_b(\bfalp;\eta)^{s-3}\grf_b(-\bfalp;\eta)^s
\d\bfalp \notag \\
&\ll M\max_{\substack{1\le \zet \le p^{b+1}\\ \zet\equiv \eta \mmod{p^b}}}
(I_{a,b}^{r,r}(X;\xi,\eta))^{1-3/(2s)}(I_{a,b+1}^{r,r}(X;\xi,\zet))^{3/(2s)}.\label{4.3}
\end{align}

\par By substituting (\ref{4.2}) and (\ref{4.3}) into (\ref{4.1}), and recalling (\ref{2.16}) and
 (\ref{2.17}), we therefore conclude that
$$I_{a,b}^{r,r}(X)\ll (K_{a,b}^{r,r}(X))^{1/2}(I_{a,b}^{r,r}(X))^{1/2}+
M(I_{a,b}^{r,r}(X))^{1-3/(2s)}(I_{a,b+1}^{r,r}(X))^{3/(2s)},$$
whence
$$I_{a,b}^{r,r}(X)\ll K_{a,b}^{r,r}(X)+M^{2s/3}I_{a,b+1}^{r,r}(X).$$
This completes the proof of the lemma.
\end{proof}

Repeated application of Lemma \ref{lemma4.1} combines with a trivial bound for the mean value 
$K_{a,b+H}^{r,r}(X)$ to deliver the basic conditioning lemma of this section.

\begin{lemma}\label{lemma4.2} Let $a$ and $b$ be integers with $1\le a<b$, and let $H$ be any integer 
with $H\ge 15$. Suppose that $b+H\le (2\tet)^{-1}$. Then there exists an integer $h$ with $0\le h<H$ 
having the property that
$$I_{a,b}^{r,r}(X)\ll (M^h)^{2s/3}K_{a,b+h}^{r,r}(X)+(M^H)^{-s/4}(X/M^b)^s(X/M^a)^{\lam-s}.$$
\end{lemma}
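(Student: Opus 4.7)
The plan is to iterate Lemma~\ref{lemma4.1} a total of $H$ times and then dispatch the residual tail with a trivial estimate. Applying the lemma first to $I_{a,b}^{r,r}(X)$, then to the resulting $I_{a,b+1}^{r,r}(X)$, and so on, the telescoping recursion gives
$$I_{a,b}^{r,r}(X) \ll \sum_{h=0}^{H-1} M^{2sh/3} K_{a,b+h}^{r,r}(X) + M^{2sH/3} I_{a,b+H}^{r,r}(X).$$
Selecting the index $h\in\{0,\dots,H-1\}$ maximising $M^{2sh/3} K_{a,b+h}^{r,r}(X)$ and absorbing the constant $H$ into the implicit constant collapses the sum to its largest summand, matching the first term in the stated conclusion.

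For the amplified tail $M^{2sH/3}I_{a,b+H}^{r,r}(X)$, I would observe that by orthogonality $I_{a,b+H}^{r,r}(X;\xi,\eta)$ counts a sub-family of the non-negative solution count
$$\oint |\grf_a(\bfalp;\xi)|^{2r}|\grf_{b+H}(\bfalp;\eta)|^{2s}\d\bfalp,$$
since the extra constraint $\bfx,\bfy\in\Xi_a^r(\xi)\mmod{p^{a+1}}$ is merely a distinctness restriction on top of the condition $\bfx,\bfy\equiv\xi\mmod{p^a}$ already present in the integrand on the right. Applying Lemma~\ref{lemma2.2} with $u=r$, $v=s$ together with the upper bound in (\ref{2.24})---valid at both levels $a$ and $b+H$ because $b+H\le(2\tet)^{-1}$ forces $X/M^a, X/M^{b+H}\ge X^{1/2}$---then yields the bound
$$I_{a,b+H}^{r,r}(X) \ll (X/M^a)^{r\mu}(X/M^{b+H})^{s\mu},\qquad \mu=(\lam+\del)/(s+r).$$

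Finally I would verify that $M^{2sH/3}$ times this estimate is indeed dominated by $(M^H)^{-s/4}(X/M^b)^s(X/M^a)^{\lam-s}$. Setting $\Lam=\lam-(s+r)\ge 0$ so that $\mu=1+(\Lam+\del)/(s+r)$, and using the identity $\lam-s-r\mu=(s\Lam-r\del)/(s+r)$, the ratio of the left to the right side simplifies to
$$M^{sH(11/12-\mu)-(b-a)s\Lam/(s+r)}\cdot X^{O(\del)}.$$
Since $\mu\ge 1>11/12$ and $b>a$, the exponent of $M$ is at most $-sH/12$, while the $X^{O(\del)}$ factor is controlled by $X^\del\le M^{1/N}$ from (\ref{2.8}); for $H\ge 15$ the combined ratio is $\ll 1$ with generous slack.

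The main obstacle is the final verification. One must recognise that the apparently growing factor $(X/M^b)^{s(\mu-1)}$ produced by the trivial bound (when $\Lam>0$) is neutralised by the compensating factor $(X/M^a)^{-s\Lam/(s+r)}$, which collapses to $M^{-(b-a)s\Lam/(s+r)}\le 1$ thanks to $b>a$. The role of the hypothesis $H\ge 15$ is exactly to supply the surplus $-sH/12$ that both absorbs the lower-order $\del$-losses and comfortably dominates the margin $-s/4$ hard-coded in the target.
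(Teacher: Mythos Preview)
Your proof is correct and follows essentially the same approach as the paper: iterate Lemma~\ref{lemma4.1} to produce the sum plus tail, bound the tail via Lemma~\ref{lemma2.2} together with (\ref{2.24}), and verify the resulting exponent. The only cosmetic difference is that the paper packages the exponent computation as a quantity $\Ome$ and shows $\Ome\le -\tfrac{1}{3}Hs$ before comparing with the target $M^{-Hs/4}$, whereas you build the target directly into the ratio and obtain the margin $-sH/12$; the two arrangements are equivalent.
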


\begin{proof} Repeated application of Lemma \ref{lemma4.1} yields the upper bound
\begin{equation}\label{4.4}
I_{a,b}^{r,r}(X)\ll \sum_{h=0}^{H-1}(M^h)^{2s/3}K_{a,b+h}^{r,r}(X)+
(M^H)^{2s/3}I_{a,b+H}^{r,r}(X).
\end{equation}
On considering the underlying Diophantine systems, it follows from Lemma \ref{lemma2.2} that for each 
$\xi$ and $\eta$, one has
\begin{align*}
I_{a,b+H}^{r,r}(X;\xi,\eta)&\le \oint|\grf_a(\bfalp;\xi)^{2r}\grf_{b+H}(\bfalp;\eta)^{2s}|\d\bfalp \\
&\ll \left(J_{s+r}(X/M^a)\right)^{r/(s+r)}\left(J_{s+r}(X/M^{b+H})\right)^{s/(s+r)}.
\end{align*}
Since $M^{b+H}=(X^\tet)^{b+H}\le X^{1/2}$, we deduce from (\ref{2.16}) and (\ref{2.24}) that
\begin{align*}
(M^H)^{2s/3}I_{a,b+H}^{r,r}(X)&\ll X^\del \left( (X/M^a)^{r/(s+r)}(X/M^{b+H})^{s/(s+r)}
\right)^\lam (M^H)^{2s/3}\\
&=X^\del (X/M^b)^s(X/M^a)^{\lam-s}M^\Ome ,
\end{align*}
where
$$\Ome=\lam \left( a-\frac{ar}{s+r}-\frac{bs}{s+r}\right) +s(b-a)+Hs\left( \frac{2}{3}-\frac{\lam}{s+r}\right) .$$
Since $\lam\ge s+r$, the lower bound $b>a$ leads to the estimate
$$\Ome\le -s(b-a)\frac{\lam}{s+r}+s(b-a)-\tfrac{1}{3}Hs\le -\tfrac{1}{3}Hs.$$
This completes the proof of the lemma, since with $H\ge 15$ one has
$$X^\del M^{-Hs/3}\ll M^{-Hs/4}.$$
\end{proof}

\section{The precongruencing step} The alteration of the definition of $K_{a,b}^{m,r}(X)$ relative to our 
earlier treatments does not prevent previous precongruencing arguments from applying, mutatis mutandis. 
However, we seek to ensure that our initial value of $b$ is very large, so that subsequent iterations work 
efficiently. For this reason, the argument of the proof of \cite[Lemma 6.1]{FW2013} must be modified in 
several ways that demand a fairly complete account of the proof. We recall the definition (\ref{2.8}) of $B$.

\begin{lemma}\label{lemma5.1}
There exists a prime number $p$, with $M<p\le 2M$, and an integer $h$ with $0\le h\le 4B$, for which 
one has
$$J_{s+r}(X)\ll M^{2sB+2sh/3}K_{0,B+h}^{r,r}(X).$$
\end{lemma}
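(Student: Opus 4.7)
The plan is to establish the lemma in two phases. In a \emph{bootstrap} phase I first produce an inequality
$$J_{s+r}(X)\ll M^{2sB}\,\Itil_B^{r,r}(X;\eta)$$
for some prime $p\in(M,2M]$ and some residue class $\eta$ modulo $p^B$. In a \emph{conditioning} phase I then iterate an $a=0$ analogue of Lemma \ref{lemma4.1} for at most $4B$ steps to convert $\Itil_B^{r,r}$ into $\Ktil_{B+h}^{r,r}$, absorbing an extra factor of $M^{2sh/3}$, and finally invoke (\ref{2.20}).

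For the bootstrap, Chebyshev's estimate supplies $\gg M/\log M$ primes in $(M,2M]$, from which a pigeonhole argument selects one on which all subsequent estimates hold uniformly. Write
$$f_k(\bfalp;X)=\sum_{\eta=1}^{p^B}\grf_B(\bfalp;\eta),$$
and apply H\"older's inequality to $2s$ of the factors in $|f_k(\bfalp;X)|^{2s+2r}$ to obtain
$$|f_k(\bfalp;X)|^{2s}\le (p^B)^{2s-1}\sum_{\eta}|\grf_B(\bfalp;\eta)|^{2s}.$$
Integrating and pigeonholing on the dominant $\eta$ contributes a further $p^B$, together yielding the $M^{2sB}$ factor. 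The remaining $|f_k(\bfalp;X)|^{2r}$ is then replaced by $|\grF^r(\bfalp;\eta)|^2$: I expand $f_k(\bfalp;X)^r$ as $\grF^r(\bfalp;\eta)+\calE(\bfalp;\eta)$, where $\calE$ collects the ordered $r$-tuples of residues modulo $p$ that contain a repetition or a term congruent to $\eta$, and apply Cauchy--Schwarz to $|f_k(\bfalp;X)|^{2r}\le 2|\grF^r(\bfalp;\eta)|^2+2|\calE(\bfalp;\eta)|^2$. Each degenerate term in $\calE$ has strictly fewer than $r$ variables running freely over distinct residues, so Lemmas \ref{lemma2.1} and \ref{lemma2.2} control the corresponding contributions; the generous calibration of $\tet$ and $\del$ in (\ref{2.8}) then allows the resulting error to be absorbed into the $X^\del$ slack of (\ref{2.24}).

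For the conditioning phase I establish an $a=0$ analogue of Lemma \ref{lemma4.1}: whenever $B\le b\le(2\tet)^{-1}$, one has
$$\Itil_b^{r,r}(X;\eta)\ll \Ktil_b^{r,r}(X;\eta)+M^{2s/3}\max_{\eta'}\Itil_{b+1}^{r,r}(X;\eta').$$
The proof is structurally identical to that of Lemma \ref{lemma4.1}, the ``spectator'' factor $\grF_a^r(\bfalp;\xi)^2$ there being replaced here by the level-$0$ factor $\grF^r(\bfalp;\eta)^2$. Iterating this inequality for at most $4B$ steps from level $B$ upwards, together with the trivial tail estimate of Lemma \ref{lemma2.2} exactly as in the proof of Lemma \ref{lemma4.2}, produces an integer $h$ with $0\le h\le 4B$ and a choice of $\eta$ for which
$$\Itil_B^{r,r}(X;\eta)\ll M^{2sh/3}\Ktil_{B+h}^{r,r}(X;\eta).$$
Taking the maximum over $\eta$ on the right and invoking (\ref{2.20}) completes the proof.

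The principal obstacle lies in the bootstrap: the degenerate configurations encoded in $|\calE(\bfalp;\eta)|^2$ must be shown to contribute strictly less than the main term, so that the passage from $|f_k(\bfalp;X)|^{2r}$ to $|\grF^r(\bfalp;\eta)|^2$ costs only a bounded constant rather than a power of $X$. The narrow but positive gap between $\del$ and $\tet$ in (\ref{2.8}) is precisely what makes this absorption possible; the remainder of the argument is a routine adaptation of precongruencing techniques already present in \cite{FW2013,Woo2013,Woo2014}.
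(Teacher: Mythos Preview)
Your conditioning phase is correct and is exactly what the paper does: equation (5.3) is precisely your $a=0$ analogue of Lemma~\ref{lemma4.1}, and the iteration with a tail estimate via Lemma~\ref{lemma2.2} leading to (5.4)--(5.5) matches your description.

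The bootstrap phase, however, has a genuine gap. You sort the $2s$ factors into residue classes modulo $p^B$ first, and only afterwards try to impose distinctness modulo $p$ on the remaining $2r$ factors by writing $f_k(\bfalp;X)^r=\grF^r(\bfalp;\eta)+\calE(\bfalp;\eta)$ and bounding the contribution of $|\calE|^2$. Your justification---that Lemmata~\ref{lemma2.1} and \ref{lemma2.2} together with the calibration of $\tet$ and $\del$ absorb this error---does not work. A degenerate $r$-tuple in $\calE$ loses only a single free residue modulo $p$, so the saving is a factor of $M$, not a power of $X$; after Cauchy's inequality the integral $\oint |\grf_B(\bfalp;\eta)|^{2s}|\calE(\bfalp;\eta)|^2\d\bfalp$ admits only a bound of the same order of magnitude as the crude bound one would obtain for the main term via Lemma~\ref{lemma2.2}. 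There is no mechanism in Lemmata~\ref{lemma2.1} and \ref{lemma2.2} that distinguishes the error from the main term by a power of $X$, and the $X^\del$ slack in (\ref{2.24}) can absorb at most $M^{\del/\tet}$, which is negligible compared with the powers of $M$ in play.

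The paper avoids this difficulty by reversing the order of operations. Following the argument of \cite[Lemma~6.1]{FW2013}, it chooses a set $\calP$ of about $(s+r)^2\tet^{-1}$ primes in $(M,2M]$ and shows by pigeonhole that for some $p\in\calP$ one has $J_{s+r}(X)\ll T(p)$, where $T(p)$ counts only those solutions of (\ref{5.1}) in which $x_1,\ldots,x_{s+r}$ are already pairwise distinct modulo $p$, and likewise the $y_i$. The prime pigeonhole does real work here: a fixed pair $x_i\ne x_j$ satisfies $x_i\equiv x_j\mmod{p}$ for at most $O(\log X/\log M)$ primes, so with enough primes one can discard all coincidences at the outset. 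Only after distinctness is secured does the paper sort the first $s$ variables on each side into residue classes modulo $p^B$ and apply H\"older; the remaining $r$ variables then lie in $\Xi^r(\bfeta)$ automatically, and no error term $\calE$ ever appears. Your vague invocation of a prime pigeonhole ``on which all subsequent estimates hold uniformly'' does not capture this, and your explicit mechanism for controlling $\calE$ is the wrong one.
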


\begin{proof} The mean value $J_{s+r}(X)$ counts the number of integral solutions of the system
\begin{equation}\label{5.1}
\sum_{i=1}^{s+r}(x_i^j-y_i^j)=0\quad (1\le j\le k),
\end{equation}
with $1\le \bfx,\bfy\le X$. Let $\calP$ denote a set of $\lceil (s+r)^2\tet^{-1}\rceil$ prime numbers 
in $(M,2M]$. That such a set exists is a consequence of the Prime Number Theorem. The argument of 
the proof of \cite[Lemma 6.1]{FW2013} leading to equation (\ref{6.2}) of that paper shows that for 
some $p\in \calP$, one has $J_{s+r}(X)\ll T(p)$, where $T(p)$ denotes the number of solutions of the 
system (\ref{5.1}) counted by $J_{s+r}(X)$ in which $x_1,\ldots ,x_{s+r}$ are distinct modulo $p$, 
and likewise $y_1,\ldots ,y_{s+r}$ are distinct modulo $p$.\par

We now examine the residue classes modulo $p^B$ of a given solution $\bfx,\bfy$ counted by $T(p)$. 
Let $\bfeta$ and $\bfzet$ be $s$-tuples with $1\le \bfeta,\bfzet\le p^B$ satisfying the condition that for 
$1\le i\le s$, one has $x_i\equiv \eta_i\mmod{p^B}$ and $y_i\equiv \zet_i\mmod{p^B}$. Recall the 
notation introduced prior to (\ref{2.18}). Then since $x_1,\ldots ,x_{s+r}$ are distinct modulo $p$, it 
follows that $(x_{s+1},\ldots ,x_{s+r})\in \Xi^r(\bfeta)$, and likewise one finds that 
$(y_{s+1},\ldots ,y_{s+r})\in \Xi^r(\bfzet)$. Then on considering the underlying Diophantine systems, 
one finds that
$$T(p)\le \sum_{1\le \bfeta,\bfzet\le p^B}\oint \left( \prod_{i=1}^s\grf_B(\bfalp;\eta_i)
\grf_B(-\bfalp;\zet_i)\right) \grF^r(\bfalp;\bfeta)\grF^r(-\bfalp;\bfzet)\d\bfalp .$$
Write  
$$\calJ(\bftet,\psi)=\oint |\grF^r(\bfalp;\bftet)^2\grf_B(\bfalp;\psi)^{2s}|\d\bfalp .$$
Then by applying H\"older's inequality, and again considering the underlying Diophantine systems, we 
discern that
\begin{align*}
T(p)&\le \sum_{1\le \bfeta,\bfzet \le p^B}\prod_{i=1}^s \left( \calJ(\bfeta,\eta_i)\calJ(\bfzet,\zet_i)
\right)^{1/(2s)}\\
&\le \sum_{1\le \bfeta,\bfzet\le p^B}\prod_{i=1}^s\left( \calJ(\eta_i,\eta_i)\calJ(\zet_i,\zet_i)\right)^{1/(2s)}.
\end{align*}
Hence, on recalling the definition (\ref{2.18}), we obtain the upper bound
\begin{align}
T(p)&\le p^{2sB}\max_{1\le \eta\le p^B}\oint |\grF^r(\bfalp;\eta)^2\grf_B(\bfalp;\eta)^{2s}|\d\bfalp \notag \\
&=p^{2sB}\max_{1\le \eta\le p^B}\Itil_B^{r,r}(X;\eta).\label{5.2}
\end{align}
By modifying the argument of the proof of \cite[Lemma 6.1]{FW2013} leading from equation (\ref{6.3}) to equation 
(\ref{6.6}) of that paper, along the lines easily surmised from our proof of Lemma \ref{lemma4.1} above, one finds that
\begin{equation}\label{5.3}
\Itil_c^{r,r}(X;\eta)\ll \Ktil_c^{r,r}(X;\eta)+M^{2s/3}\max_{1\le \zet\le p^{c+1}}\Itil_{c+1}^{r,r}(X;\zet).
\end{equation}

\par Iterating (\ref{5.3}) in order to bound $\Itil_B^{r,r}(X;\eta)$, just as in the argument concluding the 
proof of \cite[Lemma 6.1]{FW2013} (and see also (\ref{4.4}) above), we discern from (\ref{5.2}) that
\begin{align}
J_{s+r}(X)\ll &\,M^{2sB}\max_{0\le h\le 4B}(M^h)^{2s/3}\Ktil_{B+h}^{r,r}(X)\notag \\
&\,+M^{2sB+8sB/3}\max_{1\le \zet\le p^{5B+1}}\Itil_{5B+1}^{r,r}(X;\zet).\label{5.4}
\end{align}
By considering the underlying Diophantine systems, we deduce from Lemma \ref{lemma2.2} that
$$\Itil_{5B+1}^{r,r}(X;\zet)\ll (J_{s+r}(X))^{r/(s+r)}(J_{s+r}(X/M^{5B+1}))^{s/(s+r)}.$$
In this way, we find from (\ref{5.4}) either that
\begin{equation}\label{5.5}
J_{s+r}(X)\ll M^{2sB+2sh/3}\Ktil_{B+h}^{r,r}(X)
\end{equation}
for some index $h$ with $0\le h\le 4B$, so that the conclusion of the lemma holds, or else that
$$J_{s+r}(X)\ll M^{14sB/3}(J_{s+r}(X))^{r/(s+r)}(J_{s+r}(X/M^{5B+1}))^{s/(s+r)}.$$
In the latter case, since $\lam\ge s+r$, we obtain the upper bound
\begin{align*}
J_{s+r}(X)&\ll M^{14(s+r)B/3}J_{s+r}(X/M^{5B+1})\\
&\ll M^{14(s+r)B/3}(X/M^{5B+1})^{\lam+\del}\\
&\ll X^{\lam+\del}M^{-(s+r)B/3}.
\end{align*}
Invoking the definition (\ref{2.8}) of $\del$, we find that $J_{s+r}(X)\ll X^{\lam-2\del}$, contradicting 
the lower bound (\ref{2.24}) if $X=X_l$ is large enough. We are therefore forced to accept the former 
upper bound (\ref{5.5}), and hence the proof of the lemma is completed by reference to (\ref{2.20}).
\end{proof}

At this point, we fix the prime number $p$, once and for all, in accordance with Lemma \ref{lemma5.1}.

\section{The efficient congruencing step} Our strategy for executing the efficient congruencing process is 
based on that in our recent work \cite[\S6]{Woo2014}, though in present circumstances only one phase 
is required, relating $K_{a,b}^{m+1,r}(X)$ to $K_{a,b}^{m,r}(X)$ and $I_{b,b'}^{r,r}(X)$, for a 
suitable value of $b'$. We begin our discussion of the efficient congruencing process with some additional 
notation. We define the generating function
\begin{equation}\label{6.1}
\grH_{c,d}^m(\bfalp;\xi)=\sum_{\bfxi\in \Xi_c^m(\xi)}
\sum_{\substack{1\le \bfzet\le p^d\\ \bfzet\equiv \bfxi\mmod{p^{c+1}}}}\prod_{i=1}^m
|\grf_d(\bfalp;\zet_i)|^2,
\end{equation}
in which we adopt the natural convention that $\grH_{c,d}^0(\bfalp;\xi)=1$. It is useful for future 
reference to record the consequence of H\"older's inequality given in \cite[equation (6.2)]{Woo2014}, 
namely that whenever $\ome$ is a real number with $m\ome\ge 1$, then
\begin{equation}\label{6.2}
\grH_{c,d}^m(\bfalp;\xi)^\ome \le (p^{d-c})^{m\ome -1}
\sum_{\substack{1\le \zet \le p^d\\ \zet \equiv \xi\mmod{p^c}}}|\grf_d(\bfalp;\zet)|^{2m\ome}.
\end{equation}

\begin{lemma}\label{lemma6.1} Let $m$ be an integer with $0\le m\le r-1$. Suppose that $a$ and $b$ 
are integers with $0\le a<b\le \tet^{-1}$ and $(k-m)b\ge (m+1)a$, and suppose further that 
$a\le b/\sqrt{k}$. Then whenever $b'$ is an integer with $a\le b'\le (k-m)b-ma$, one has
$$K_{a,b}^{m+1,r}(X)\ll \left( (M^{b'-a})^sI_{b,b'}^{r,r}(X)\right)^{1/(s-m)}
\left( K_{a,b}^{m,r}(X)\right)^{(s-m-1)/(s-m)}.$$
\end{lemma}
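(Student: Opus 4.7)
My plan is to combine Hölder's inequality with a congruencing-promotion step from level $a$ to level $b'$, the latter controlled by Lemma 3.1 via the congruence condition (2.15). Fix $\xi$ and $\eta$ realising the maximum in (2.17) for $K_{a,b}^{m+1,r}(X)$, and open (2.12) to write
$$K_{a,b}^{m+1,r}(X;\xi,\eta) = \oint |\grF_a^{m+1}(\bfalp;\xi)|^2 |\grF_b^r(\bfalp;\eta)|^2 |\grf_b(\bfalp;\eta)|^{2(s-m-1)} \d\bfalp.$$
I would apply Hölder's inequality with exponents $1/(s-m)$ and $(s-m-1)/(s-m)$, engineering the factorisation so that $s-m-1$ of the $s-m$ pieces agree (after consolidation) with the integrand $|\grF_a^m|^2|\grF_b^r|^2|\grf_b|^{2(s-m)}$ of $K_{a,b}^{m,r}$, yielding the advertised $K_{a,b}^{m,r}(X;\xi,\eta)^{(s-m-1)/(s-m)}$ contribution.

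Next, I would bound the remaining $1/(s-m)$-piece by a promotion argument. Writing $\grf_{a+1}(\bfalp;\xi_i) = \sum_{\zet_i\equiv \xi_i\pmod{p^{a+1}},\,1\le \zet_i\le p^{b'}}\grf_{b'}(\bfalp;\zet_i)$ for each of the $m+1$ factors in $\grF_a^{m+1}$ and applying Cauchy--Schwarz gives $|\grf_{a+1}(\bfalp;\xi_i)|^2 \le p^{b'-a-1}\sum_{\zet_i} |\grf_{b'}(\bfalp;\zet_i)|^2$, after which the surviving integrand contains only level-$b'$ exponential sums together with the $\grF_b^r$ factor. The combined Cauchy--Schwarz losses, tallied across the $(m+1)$ refined $\grF_a^{m+1}$-variables and the further redistribution of the $s-m-1$ factors of $|\grf_b|^2$ that the Hölder step carries along, furnish the $(M^{b'-a})^s$ prefactor. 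To prevent the sum over refinements $\bfzet$ from blowing up, I invoke the congruence (2.15), which follows from $\bfu,\bfv,\bfw,\bfz\equiv \eta\pmod{p^b}$ and the Binomial theorem: the tuples $\bfzet$ must satisfy $\sum_i(\zet_i-\eta)^j\equiv m_j\pmod{p^{jb}}$ for some $m_j$. By Lemma 3.1 applied with $h=b'$ (the hypotheses $a\le b'\le (k-m)b-ma$ and $(k-m)b\ge (m+1)a$ being precisely the lemma's input), the number of $\calR(b')$-equivalence classes of such $\bfzet$ is at most $k!$.

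With the multiplicity controlled, the promoted mean value reduces to an integral of shape $\oint |\grF_b^r(\bfalp;\eta)|^2|\grf_{b'}(\bfalp;\zet)|^{2s}\d\bfalp = I_{b,b'}^{r,r}(X;\eta,\zet)$ for some $\zet$ with $\zet\equiv \eta\pmod{p^b}$ (and hence $\zet\not\equiv \xi\pmod p$, since $\eta\not\equiv \xi\pmod p$), which by (2.16) is at most $I_{b,b'}^{r,r}(X)$. Assembling the pieces delivers the estimate of the lemma.

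The central technical obstacle is the exponent bookkeeping: the Hölder application must simultaneously produce a perfect $K_{a,b}^{m,r}$ on the second factor and leave enough room in the first factor so that, after the $(m+1)$-fold Cauchy--Schwarz promotion and the redistribution of the companion $\grf_b$-variables, the promoted integrand reassembles exactly into the integrand of $I_{b,b'}^{r,r}$ with only the factor $(M^{b'-a})^s$ extracted---no more. The hypothesis $a\le b/\sqrt{k}$ serves to absorb secondary losses from the Cauchy--Schwarz step when $a$ and $b$ are close in size, while the conditions $(k-m)b\ge (m+1)a$ and $a\le b'\le (k-m)b-ma$ are exactly what Lemma 3.1 demands of the auxiliary congruence system.
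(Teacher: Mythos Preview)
Your proposal reverses the order of the two main operations, and that reversal is fatal. You cannot apply H\"older directly to the integrand $|\grF_a^{m+1}(\bfalp;\xi)|^2|\grF_b^r(\bfalp;\eta)|^2|\grf_b(\bfalp;\eta)|^{2(s-m-1)}$ so as to produce $K_{a,b}^{m,r}$ as one factor, because $\grF_a^{m+1}(\bfalp;\xi)$ is a \emph{sum} over $(m+1)$-tuples $\bfxi\in\Xi_a^{m+1}(\xi)$ of products $\prod_i\grf_{a+1}(\bfalp;\xi_i)$, not itself a product. There is no algebraic factorisation $|\grF_a^{m+1}|^2=|\grF_a^m|^2\cdot(\text{something})$ to which H\"older could be applied, and your proposed pointwise Cauchy--Schwarz $|\grf_{a+1}(\bfalp;\xi_i)|^2\le p^{b'-a-1}\sum_{\zet_i}|\grf_{b'}(\bfalp;\zet_i)|^2$ does not act inside the square of a sum.

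The paper's argument proceeds in the opposite order. One first exploits the congruence (2.15) and Lemma~3.1 at the level of the Diophantine system to replace (inside the integral) $|\grF_a^{m+1}(\bfalp;\xi)|^2$ by the generating function $\grH_{a,b'}^{m+1}(\bfalp;\xi)$ of (6.1), at the cost of only the constant $k!$. The point is that $\grH_{a,b'}^{m+1}$ is a positive sum of products of moduli squared, and therefore does satisfy the trivial submultiplicative bound $\grH_{a,b'}^{m+1}\le \grH_{a,b'}^m\cdot\grH_{a,b'}^1$. Only after this conversion does one apply H\"older (with three exponents $\ome_1,\ome_2,\ome_3$); the $\grH_{a,b'}^m$-piece then converts \emph{back} to $K_{a,b}^{m,r}$ by dropping the diagonal constraint $\bfx\equiv\bfy\pmod{p^{b'}}$, while the $\grH_{a,b'}^1$- and $\grH_{a,b'}^m$-pieces raised to high powers yield $I_{b,b'}^{r,r}$ via (6.2), producing the factor $(M^{b'-a})^s$.

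Two smaller points: the promoted variable $\zet$ satisfies $\zet\equiv\xi\pmod{p^a}$ (it arises from refining the $\xi$-side, not the $\eta$-side), whence $\zet\not\equiv\eta\pmod p$, which is the condition needed in (2.16); your claim $\zet\equiv\eta\pmod{p^b}$ is backwards and would in fact violate that condition. And the hypothesis $a\le b/\sqrt{k}$ plays no role in the proof of this lemma; it is carried along only to guarantee that the output $b'$ can be chosen large enough for the next iteration.
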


\begin{proof} We first consider the situation in which $a\ge 1$. The argument associated with the case 
$a=0$ is very similar, and so we are able to appeal later to a highly abbreviated argument for this case in 
order to complete the proof of the lemma. Consider fixed integers $\xi$ and $\eta$ with
\begin{equation}\label{6.3}
1\le \xi\le p^a,\quad 1\le \eta\le p^b\quad \text{and}\quad \eta\not\equiv \xi\mmod{p}.
\end{equation}
The quantity $K_{a,b}^{m+1,r}(X;\xi,\eta)$ counts the number of integral solutions of the system 
(\ref{2.14}) with $m$ replaced by $m+1$, subject to the attendant conditions on $\bfx$, $\bfy$, $\bfu$, 
$\bfv$, $\bfw$, $\bfz$. Given such a solution of the system (\ref{2.14}), the discussion leading to 
(\ref{2.15}) shows that
\begin{equation}\label{6.4}
\sum_{i=1}^{m+1}(x_i-\eta)^j\equiv \sum_{i=1}^{m+1}(y_i-\eta)^j\mmod{p^{jb}}\quad 
(1\le j\le k).
\end{equation}
In the notation introduced in \S3, it follows that for some $k$-tuple of integers $\bfm$, both 
$[\bfx\mmod{p^{kb}}]$ and $[\bfy\mmod{p^{kb}}]$ lie in $\calB_{a,b}^{m+1}(\bfm;\xi,\eta)$. Write
$$\grG_{a,b}(\bfalp;\bfm)=\sum_{\bftet\in \calB_{a,b}^{m+1}(\bfm;\xi,\eta)} 
\prod_{i=1}^{m+1}\grf_{kb}(\bfalp;\tet_i).$$
Then on considering the underlying Diophantine system, we see from (\ref{2.12}) and (\ref{6.4}) that 
$$K_{a,b}^{m+1,r}(X;\xi,\eta)=\sum_{m_1=1}^{p^b}\ldots \sum_{m_k=1}^{p^{kb}}
\oint |\grG_{a,b}(\bfalp;\bfm)^2\grF_m^*(\bfalp)^2|\d\bfalp ,$$
where we write
\begin{equation}\label{6.5}
\grF_m^*(\bfalp)=\grF_b^r(\bfalp;\eta)\grf_b(\bfalp;\eta)^{s-m-1}.
\end{equation}

\par We now partition the vectors in each set $\calB_{a,b}^{m+1}(\bfm;\xi,\eta)$ into equivalence 
classes modulo $p^{b'}$ as in \S3. Write $\calC(\bfm)=\calC_{a,b}^{m+1,b'/b}(\bfm;\xi,\eta)$. An 
application of Cauchy's inequality leads via (\ref{3.2}) and Lemma \ref{lemma3.1} to the bound
\begin{align*}
|\grG_{a,b}(\bfalp;\bfm)|^2&=\Bigl| \sum_{\grC \in \calC(\bfm)}
\sum_{\bftet \in \grC}\prod_{i=1}^{m+1}\grf_{kb}(\bfalp;\tet_i)\Bigr|^2\\
&\le \text{card}(\calC(\bfm))\sum_{\grC\in \calC(\bfm)}
\Bigl| \sum_{\bftet \in \grC}\prod_{i=1}^{m+1}\grf_{kb}(\bfalp;\tet_i)\Bigr|^2\\
&\le k!\sum_{\grC\in \calC(\bfm)}\Bigl| \sum_{\bftet \in \grC}\prod_{i=1}^{m+1}
\grf_{kb}(\bfalp;\tet_i)\Bigr|^2.
\end{align*}
It may be helpful to note here that since $b'/b\le k-m-ma/b$, then in view of the relation 
(\ref{3.2}), it follows from Lemma \ref{lemma3.1} that
$$\text{card}(\calC(\bfm))=B_{a,b}^{m+1,b'/b}(p)\le k!.$$
Hence
$$K_{a,b}^{m+1,r}(X;\xi,\eta)\ll \sum_\bfm \sum_{\grC\in \calC(\bfm)}\oint 
\Bigl| \grF_m^*(\alp)\sum_{\bftet \in \grC}\prod_{i=1}^{m+1}\grf_{kb}(\bfalp;\tet_i)\Bigr|^2
\d \bfalp.$$
For each $k$-tuple $\bfm$ and equivalence class $\grC$, the integral above counts solutions of 
(\ref{2.14}) with the additional constraint that both $[\bfx \mmod{p^{kb}}]$ and 
$[\bfy \mmod{p^{kb}}]$ lie in $\grC$. In particular, one has $\bfx\equiv \bfy\mmod{p^{b'}}$. 
Moreover, as the sets $\calB_{a,b}^{m+1}(\bfm;\xi,\eta)$ are disjoint for distinct $k$-tuples $\bfm$ with 
$1\le m_j\le p^{jb}$ $(1\le j\le k)$, to each pair $(\bfx,\bfy)$ there corresponds at most one pair 
$(\bfm,\grC)$. Thus we deduce that 
$$K_{a,b}^{m+1,r}(X;\xi,\eta)\ll H^\dagger (X;\xi,\eta),$$
where $H^\dagger(X;\xi,\eta)$ denotes the number of solutions of (\ref{2.14}) subject to the additional 
condition $\bfx\equiv \bfy\mmod{p^{b'}}$. Hence, on considering the underlying Diophantine systems and 
recalling (\ref{6.1}), we discern that
\begin{equation}\label{6.6}
K_{a,b}^{m+1,r}(X;\xi,\eta)\ll \oint \grH_{a,b'}^{m+1}(\bfalp;\xi)|\grF_m^*(\bfalp)|^2\d\bfalp .
\end{equation}

\par An inspection of the definition of $\Xi_a^m(\xi)$ in the preamble to (\ref{2.10}) reveals on this 
occasion that when $\bfxi\in \Xi_a^{m+1}(\xi)$, then
$$(\xi_1,\ldots ,\xi_m)\in \Xi_a^m(\xi)\quad \text{and}\quad (\xi_{m+1})\in \Xi_a^1(\xi).$$
Then a further consideration of the underlying Diophantine systems leads from (\ref{6.6}) via (\ref{6.1}) 
to the upper bound
$$K_{a,b}^{m+1,r}(X;\xi,\eta)\ll \oint \grH^m_{a,b'}(\bfalp;\xi)\grH^1_{a,b'}(\bfalp;\xi)|
\grF_m^*(\bfalp)|^2\d\bfalp .$$
By applying H\"older's inequality to the integral on the right hand side of this relation, bearing in mind the 
definition (\ref{6.5}), we obtain the bound
\begin{equation}\label{6.7}
K_{a,b}^{m+1,r}(X;\xi,\eta)\ll U_1^{\ome_1}U_2^{\ome_2}U_3^{\ome_3},
\end{equation}
where
\begin{equation}\label{6.8}
\ome_1=\frac{s-m-1}{s-m},\quad \ome_2=\frac{1}{s},\quad \ome_3=\frac{m}{s(s-m)},
\end{equation}
and
\begin{equation}\label{6.9}
U_1=\oint \grH_{a,b'}^m(\bfalp;\xi)|\grF_b^r(\bfalp;\eta)^2\grf_b(\bfalp;\eta)^{2s-2m}|\d\bfalp ,
\end{equation}
\begin{equation}\label{6.10}
U_2=\oint |\grF_b^r(\bfalp;\eta)|^2\grH_{a,b'}^1(\bfalp;\xi)^s\d\bfalp ,
\end{equation}
\begin{equation}\label{6.11}
U_3=\oint |\grF_b^r(\bfalp;\eta)|^2\grH_{a,b'}^m(\bfalp;\xi)^{s/m}\d\bfalp .
\end{equation}

\par We now relate the mean values $U_i$ to those introduced in \S2. Observe first that a consideration of 
the underlying Diophantine system leads from (\ref{6.9}) via (\ref{6.1}) and (\ref{2.10}) to the upper 
bound
$$U_1\le \oint |\grF_a^m(\bfalp;\xi)^2\grF_b^r(\bfalp;\eta)^2\grf_b(\bfalp;\eta)^{2s-2m}|\d\bfalp.$$
On recalling (\ref{2.12}) and (\ref{2.17}), we thus deduce that
\begin{equation}\label{6.12}
U_1\le K_{a,b}^{m,r}(X).
\end{equation}
Next, by employing (\ref{6.2}) within (\ref{6.10}) and (\ref{6.11}), we find that
$$U_2+U_3\ll (M^{b'-a})^s\max_{\substack{1\le \zet\le p^{b'}\\ \zet\equiv \xi\mmod{p^a}}}
\oint |\grF_b^r(\bfalp ;\eta)^2\grf_{b'}(\bfalp;\zet)^{2s}|\d\bfalp .$$
Notice here that since the condition (\ref{6.3}) implies that $\eta\not\equiv \xi\mmod{p}$, and we have 
$\zet\equiv \xi\mmod{p^a}$ with $a\ge 1$, then once more one has $\zet\not\equiv \eta\mmod{p}$. In 
this way we deduce from (\ref{2.11}) and (\ref{2.16}) that
\begin{equation}\label{6.13}
U_2+U_3\ll (M^{b'-a})^s I_{b,b'}^{r,r}(X).
\end{equation}

By substituting (\ref{6.12}) and (\ref{6.13}) into the relation
$$K_{a,b}^{m+1,r}(X;\xi,\eta )\ll U_1^{\ome_1}(U_2+U_3)^{1-\ome_1},$$
that is immediate from (\ref{6.7}), and then recalling (\ref{6.8}) and (\ref{2.17}), the conclusion of the 
lemma follows when $a\ge 1$. When $a=0$, we must modify this argument slightly. In this case, from 
(\ref{2.19}) and (\ref{2.20}) we find that
$$K_{0,b}^{m+1,r}(X)=\max_{1\le \eta\le p^b}\oint |\grF^{m+1}(\bfalp;\eta)^2
\grF_b^r(\bfalp;\eta)^2\grf_b(\bfalp;\eta)^{2s-2m-2}|\d \bfalp.$$
The desired conclusion follows in this instance by pursuing the proof given above in the case $a\ge 1$,
 noting that the definition of $\grF^{m+1}(\bfalp;\eta)$ ensures that the variables resulting from the 
congruencing argument avoid the congruence class $\eta$ modulo $p$. This completes our account of 
the proof of the lemma.
\end{proof}

We note that in our application of Lemma \ref{lemma6.1}, we restrict to situations with $0\le m\le r-1$. 
Thus, since $r\le r_0$, it follows from (\ref{2.2}) that the hypothesis $a\le b/\sqrt{k}$ ensures that
$$k-m-ma/b\ge k-(k-2\sqrt{k}+1)-(k-2\sqrt{k}+1)/\sqrt{k}>\sqrt{k}.$$
Since $b$ is large, we are therefore at liberty to apply Lemma \ref{lemma6.1} with a choice for $b'$ 
satisfying the condition $b/b'<1/\sqrt{k}$, thereby preparing appropriately for subsequent applications 
of Lemma \ref{lemma6.1}.

\section{The multigrade combination} We next combine the estimates supplied by Lemma \ref{lemma6.1} 
so as to bound $K_{a,b}^{r,r}(X)$ in terms of the mean values $I_{b,k_mb}^{r,r}(X)$ $(0\le m\le r-1)$, 
in which the integers $k_m$ satisfy $k_m\le k-m-\lceil m/\sqrt{k}\rceil$. Before announcing our basic 
asymptotic estimate, we define the exponents
\begin{equation}\label{7.1}
s_m=s-m\quad \text{and}\quad \phi_m=(s-r)/(s_ms_{m+1})\quad (0\le m\le r-1).
\end{equation}
In addition, we write
\begin{equation}\label{7.2}
\phi^*=(s-r)/s,
\end{equation}
so that
\begin{align}
\phi^*+\sum_{m=0}^{r-1}\phi_m=&\, \frac{s-r}{s}+(s-r)\sum_{m=0}^{r-1}
(s_{m+1}^{-1}-s_m^{-1})\notag \\
=&\,\frac{s-r}{s}+(s-r)\left( \frac{1}{s-r}-\frac{1}{s}\right)=1.\label{7.3}
\end{align}

\begin{lemma}\label{lemma7.1}
Suppose that $a$ and $b$ are integers with $0\le a<b\le \tet^{-1}$ and $(k-r+1)b\ge ra$, and suppose 
further that $a\le b/\sqrt{k}$. Then whenever $a'$ is an integer with $a'\ge a$ for which 
$(k-r+1)b\ge ra'$, one has
$$K_{a,b}^{r,r}(X)\ll \left(J_{s+r}(X/M^b)\right)^{\phi^*}\prod_{m=0}^{r-1}
\left( (M^{b_m-a})^sI_{b,b_m}^{r,r}(X)\right)^{\phi_m},$$
where we write $b_m=(k-m)b-ma'$.
\end{lemma}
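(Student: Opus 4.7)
The plan is to iterate Lemma \ref{lemma6.1} exactly $r$ times, descending from $m=r-1$ down to $m=0$, while making the choice $b' = b_m := (k-m)b - ma'$ at the $m$-th stage. Each application then delivers an estimate of the shape
\[
K_{a,b}^{m+1,r}(X)\ll \bigl((M^{b_m-a})^{s}I_{b,b_m}^{r,r}(X)\bigr)^{1/s_m}K_{a,b}^{m,r}(X)^{s_{m+1}/s_m}.
\]
Telescoping these $r$ inequalities, the cumulative exponent on $K_{a,b}^{0,r}(X)$ becomes
\[
\prod_{j=0}^{r-1}\frac{s_{j+1}}{s_j}=\frac{s_r}{s_0}=\frac{s-r}{s}=\phi^*,
\]
while the cumulative exponent on the $m$-th multiplier $\bigl((M^{b_m-a})^{s}I_{b,b_m}^{r,r}(X)\bigr)$ becomes
\[
\frac{1}{s_m}\prod_{j=m+1}^{r-1}\frac{s_{j+1}}{s_j}=\frac{1}{s_m}\cdot\frac{s_r}{s_{m+1}}=\frac{s-r}{s_ms_{m+1}}=\phi_m,
\]
in exact agreement with (\ref{7.1}) and (\ref{7.2}). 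This yields the desired product structure modulo the final task of replacing $K_{a,b}^{0,r}(X)^{\phi^*}$ by $J_{s+r}(X/M^b)^{\phi^*}$.

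Before invoking the iteration, I verify that the hypotheses of Lemma \ref{lemma6.1} are met at every stage. The condition $(k-m)b\ge (m+1)a$ follows from $a\le b/\sqrt{k}$ together with $m\le r-1\le r_0-1$, since this gives $(m+1)/(k-m)\le r_0/(k-r_0+1)<\sqrt{k}$ once $k$ is large. For the choice $b'=b_m$, the upper bound $b_m\le (k-m)b-ma$ is automatic from $a'\ge a$. For the lower bound $b_m\ge a$, observe that $b_m$ decreases in $m$, so it suffices to treat $b_{r-1}$; the hypothesis $(k-r+1)b\ge ra'$ yields $b_{r-1}\ge (k-r+1)b/r$, and comparison with $a\le b/\sqrt{k}$ (using $(k-r+1)/r\ge 2/\sqrt{k}$) confirms $b_{r-1}\ge a$.

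It remains to handle the base case of the iteration. Since $\grF_a^0(\bfalp;\xi)\equiv 1$, and likewise $\grF^0(\bfalp;\eta)\equiv 1$ in the modified definition when $a=0$, the definitions (\ref{2.12}) and (\ref{2.19})--(\ref{2.20}) collapse to
\[
K_{a,b}^{0,r}(X;\xi,\eta)=\oint |\grF_b^r(\bfalp;\eta)|^2|\grf_b(\bfalp;\eta)|^{2s}\d\bfalp.
\]
By orthogonality this counts solutions of the Vinogradov system (\ref{1.1}) in $2(s+r)$ variables, every one of which is congruent to $\eta$ modulo $p^b$ (the distinctness constraint carried by $\grF_b^r$ only restricts to a sub-collection of solutions). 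Translating each variable by $-\eta$ and invoking the Binomial Theorem, and then rescaling by $p^b$, converts this count into one for (\ref{1.1}) with variables in an interval of length $\ll X/M^b$, yielding $K_{a,b}^{0,r}(X)\ll J_{s+r}(X/M^b)$. Inserting this into the telescoped inequality completes the proof.

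The main obstacle is the exponent bookkeeping: the telescoping cancellation that produces precisely $\phi_m$ and $\phi^*$ is the heart of the argument, and the identity $\phi^*+\sum_m\phi_m=1$ recorded in (\ref{7.3}) is what makes the estimate dimensionally consistent. Beyond that, the hypothesis checks for Lemma \ref{lemma6.1} at each of the $r$ stages are essentially routine, relying only on the quantitative control supplied by $r\le r_0$ and $a\le b/\sqrt{k}$.
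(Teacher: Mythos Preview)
Your proof is correct and follows essentially the same approach as the paper: both arguments iterate Lemma \ref{lemma6.1} for $m=r-1,r-2,\ldots,0$ with $b'=b_m$, the paper packaging this as a downward induction on the intermediate bound (\ref{7.4}) while you present it as a direct telescoping product. Your explicit verification of the hypotheses of Lemma \ref{lemma6.1} at each stage (which the paper leaves implicit) and your direct treatment of $K_{a,b}^{0,r}(X)$ are both sound; for the latter the paper simply cites Lemma \ref{lemma2.1}, and for the lower bound $b_{r-1}\ge a$ the cleaner route is to observe that $(k-r+1)b\ge ra'$ gives $b_{r-1}=(k-r+1)b-(r-1)a'\ge a'\ge a$ directly.
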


\begin{proof} We prove by induction that for $0\le l\le r-1$, one has
\begin{equation}\label{7.4}
K_{a,b}^{r,r}(X)\ll \left( K_{a,b}^{l,r}(X)\right)^{\phi^*_l}\prod_{m=l}^{r-1}
\left( (M^{b_m-a})^sI_{b,b_m}^{r,r}(X)\right)^{\phi_m},
\end{equation}
where
$$\phi^*_l=(s-r)/(s-l).$$
The conclusion of the lemma follows from the case $l=0$ of (\ref{7.4}), on noting that Lemma 
\ref{lemma2.1} delivers the estimate $K_{a,b}^{0,r}(X)\ll J_{s+r}(X/M^b)$.\par

We observe first that the inductive hypothesis (\ref{7.4}) holds when $l=r-1$, as a consequence of the 
case $m=r-1$ of Lemma \ref{lemma6.1} and the definitions (\ref{7.1}) and (\ref{7.2}). Suppose then that 
$J$ is a positive integer not exceeding $r-2$, and that the inductive hypothesis (\ref{7.4}) holds for 
$J<l\le r-1$. An application of Lemma \ref{lemma6.1} yields the estimate
$$K_{a,b}^{J+1,r}(X)\ll \left( (M^{b_J-a})^sI_{b,b_J}^{r,r}(X)\right)^{1/(s-J)}
\left( K_{a,b}^{J,r}(X)\right)^{(s-J-1)/(s-J)}.$$
On substituting this bound into the estimate (\ref{7.4}) with $l=J+1$, one obtains the new upper bound
$$K_{a,b}^{r,r}(X)\ll \left( K_{a,b}^{J,r}(X)\right)^{\phi^*_J}\prod_{m=J}^{r-1}
\left( (M^{b_m-a})^sI_{b,b_m}^{r,r}(X)\right)^{\phi_m},$$
and thus the inductive hypothesis holds with $l=J$. This completes the inductive step, so in view of our 
earlier remarks, the conclusion of the lemma now follows.
\end{proof}

We next convert Lemma \ref{lemma7.1} into a more portable form by making use of the anticipated 
magnitude operator $\llbracket \,\cdot \,\rrbracket$ introduced in (\ref{2.21}) to (\ref{2.23}).

\begin{lemma}\label{lemma7.2} Suppose that $a$ and $b$ are integers with $0\le a<b\le \tet^{-1}$ and 
$(k-r+1)b\ge ra$, and suppose further that $a\le b/\sqrt{k}$. Then whenever $a'$ is an integer with 
$a'\ge a$ for which $(k-r+1)b\ge ra'$, one has
$$\llbracket K_{a,b}^{r,r}(X)\rrbracket \ll \left( (X/M^b)^{\Lam+\del}\right)^{\phi^*}
\prod_{m=0}^{r-1}\llbracket I_{b,b_m}^{r,r}(X)\rrbracket^{\phi_m},$$
where $b_m=(k-m)b-ma'$.
\end{lemma}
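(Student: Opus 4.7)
The approach I would take is a direct normalisation argument: start from the unnormalised bound supplied by Lemma \ref{lemma7.1}, convert each mean value on the right-hand side into its normalised form using (\ref{2.21})--(\ref{2.23}), then divide through by the normalising factor for $K_{a,b}^{r,r}(X)$ on the left and verify that the remaining powers of $X$, $M^a$, and $M^b$ collapse, via (\ref{7.3}), to exactly what is claimed.

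Concretely, I would begin from the estimate
$$K_{a,b}^{r,r}(X)\ll \bigl(J_{s+r}(X/M^b)\bigr)^{\phi^*}\prod_{m=0}^{r-1}\bigl((M^{b_m-a})^sI_{b,b_m}^{r,r}(X)\bigr)^{\phi_m}$$
furnished by Lemma \ref{lemma7.1}. Since $b\le \tet^{-1}$ (and the hypothesis $a\le b/\sqrt{k}$ together with the application regime ensures $X/M^b\ge X^{1/2}$), the upper bound in (\ref{2.24}) delivers
$$J_{s+r}(X/M^b)\ll (X/M^b)^{s+r+\Lam+\del}.$$
For each factor indexed by $m$, the definition (\ref{2.22}) gives
$$I_{b,b_m}^{r,r}(X)=\llbracket I_{b,b_m}^{r,r}(X)\rrbracket (X/M^b)^r(X/M^{b_m})^s,$$
and the elementary identity $(M^{b_m-a})^s(X/M^{b_m})^s=(X/M^a)^s$ neatly absorbs the prefactor $(M^{b_m-a})^s$ into $(X/M^a)^s$. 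Thus each bracketed term contributes $\llbracket I_{b,b_m}^{r,r}(X)\rrbracket^{\phi_m}(X/M^a)^{s\phi_m}(X/M^b)^{r\phi_m}$.

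The remaining step is exponent bookkeeping. Dividing through by $(X/M^a)^r(X/M^b)^s$ to produce $\llbracket K_{a,b}^{r,r}(X)\rrbracket$ on the left, the exponent of $(X/M^a)$ on the right becomes $s\sum_{m=0}^{r-1}\phi_m-r$, which vanishes by (\ref{7.3}) combined with the identity $s\phi^*=s-r$; and the exponent of $(X/M^b)$ becomes
$$(s+r+\Lam+\del)\phi^*+r(1-\phi^*)-s = s\phi^*+r+(\Lam+\del)\phi^*-s = (\Lam+\del)\phi^*,$$
again using $s\phi^*=s-r$. Assembling these, the stated bound follows. The only substantive step is confirming that $X/M^b$ lies in the admissible range for the upper bound in (\ref{2.24}); once that is in hand, the remainder is purely mechanical manipulation of exponents, and there is no combinatorial obstacle.
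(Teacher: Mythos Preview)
Your proposal is correct and follows essentially the same route as the paper: both start from Lemma~\ref{lemma7.1}, invoke (\ref{2.24}) to replace $J_{s+r}(X/M^b)$ by $(X/M^b)^{s+r+\Lam+\del}$, pass to normalised mean values via (\ref{2.21})--(\ref{2.23}), and then verify using (\ref{7.3}) and $s\phi^*=s-r$ that the residual powers of $M^a$ and $M^b$ vanish. The paper packages the exponent bookkeeping slightly differently, collecting the surplus as $M^{\mu^*b+\nu^*a}$ and showing $\mu^*=\nu^*=0$, but your separate tracking of the $(X/M^a)$- and $(X/M^b)$-exponents amounts to the same computation.
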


\begin{proof} We find from Lemma \ref{lemma7.1} in combination with (\ref{2.23}) that
\begin{equation}\label{7.5}
\llbracket K_{a,b}^{r,r}(X)\rrbracket \ll M^{\mu^*b+\nu^*a}
\left( (X/M^b)^{\Lam+\del}\right)^{\phi^*}\prod_{m=0}^r
\llbracket I_{b,b_m}^{r,r}(X)\rrbracket^{\phi_m},
\end{equation}
where
$$\mu^*=s-\phi^*(s+r)-\sum_{m=0}^{r-1}r\phi_m\quad \text{and}\quad 
\nu^*=r-s\sum_{m=0}^{r-1}\phi_m.$$
On recalling (\ref{7.2}) and (\ref{7.3}), a modicum of computation confirms that
$$\mu^*=s-\phi^*(s+r)-r(1-\phi^*)=0\quad \text{and}\quad \nu^*=r-s(1-\phi^*)=0.$$
The conclusion of the lemma is therefore immediate from (\ref{7.5}).
\end{proof}

We next turn to the task of establishing a multistep multigrade combination. Here, in order to keep 
complications under control, we discard some information not essential to our ultimate iterative process. 
We begin by introducing some additional notation. We recall that $R$ is a positive integer sufficiently large 
in terms of $s$ and $k$. We consider $R$-tuples of integers $(m_1,\ldots ,m_R)\in [0,r-1]^R$, to each of 
which we associate an $R$-tuple of integers $\bfh=(h_1(\bfm),\ldots ,h_R(\bfm))\in [0,\infty)^R$. The 
integral tuples $\bfh(\bfm)$ will be fixed as the iteration proceeds, with $h_n(\bfm)$ depending at most 
on the first $n$ coordinates of $(m_1,\ldots ,m_R)$. We may abuse notation in some circumstances by 
writing $h_n(\bfm,m_n)$ or $h_n(m_1,\ldots ,m_{n-1},m_n)$ in place of $h_n(m_1,\ldots ,m_R)$, 
reflecting the latter implicit dependence. We suppose that a (large) positive integer $b$ has already been 
fixed. We then define the sequences $(a_n)=(a_n(\bfm;\bfh))$ and $(b_n)=(b_n(\bfm;\bfh))$ by putting
\begin{equation}\label{7.6}
a_0=\lfloor b/\sqrt{k}\rfloor\quad \text{and}\quad b_0=b,
\end{equation}
and then applying the iterative relations
\begin{equation}\label{7.7}
a_n=b_{n-1}\quad \text{and}\quad b_n=(k-m_n)b_{n-1}-m_na_{n-1}+h_n(\bfm)\quad (1\le n\le R).
\end{equation}
Finally, we define the quantity $\Tet_n(\bfm;\bfh)$ for $0\le n\le R$ by putting
\begin{equation}\label{7.8}
\Tet_n(\bfm;\bfh)=(X/M^b)^{-\Lam-\del}\llbracket K_{a_n,b_n}^{r,r}(X)\rrbracket +M^{-3sk^Rb}.
\end{equation}

\begin{lemma}\label{lemma7.3}
Suppose that $a$ and $b$ are integers with $0\le a<b\le (16Rk^{2R}\tet)^{-1}$ and $(k-r+1)b\ge ra$, 
and suppose further that $a\le b/\sqrt{k}$. Then there exists a choice for $\bfh(\bfm)\in [0,r-1]^R$, 
satisfying the condition that $0\le h_n(\bfm)\le 15k^Rb$ $(1\le n\le R)$, and for which one has
$$(X/M^b)^{-\Lam-\del}\llbracket K_{a,b}^{r,r}(X)\rrbracket \ll \prod_{\bfm\in [0,r-1]^R}
\Tet_R(\bfm;\bfh)^{\phi_{m_1}\ldots \phi_{m_R}}.$$
\end{lemma}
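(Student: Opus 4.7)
The plan is to prove the result by induction on $n$ for $1\le n\le R$, showing that one can choose functions $h_1(\bfm),\ldots,h_n(\bfm)$, with $h_j$ depending only on $(m_1,\ldots,m_j)$ and satisfying $0\le h_j(\bfm)\le 15k^Rb$, such that
$$(X/M^b)^{-\Lam-\del}\llbracket K_{a,b}^{r,r}(X)\rrbracket\ll\prod_{(m_1,\ldots,m_n)\in[0,r-1]^n}\Tet_n(m_1,\ldots,m_n;\bfh)^{\phi_{m_1}\cdots\phi_{m_n}};$$
the conclusion of the lemma is the case $n=R$. For the base case $n=1$, I apply Lemma \ref{lemma7.2} to $K_{a,b}^{r,r}(X)$ with the choice $a'=a_0=\lfloor b/\sqrt{k}\rfloor$, permissible because $a\in\dbZ$ and $a\le b/\sqrt{k}$ together force $a\le a_0$. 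The resulting $I$-factors $\llbracket I_{b,b_{m_1}^{(1)}}^{r,r}\rrbracket$, where $b_{m_1}^{(1)}=(k-m_1)b-m_1a_0$, are each passed through Lemma \ref{lemma4.2} with $H=15k^Rb$, producing $h_1(m_1)\in[0,H)$ and a $K_{a_1,b_1}^{r,r}$-bound in which $a_1=b_0=b$ and $b_1=b_{m_1}^{(1)}+h_1$ match (\ref{7.6})--(\ref{7.7}). Dividing through by $(X/M^b)^{\Lam+\del}$ and invoking the identity $\phi^*+\sum_m\phi_m=1$ to cancel the residual $(X/M^b)$-exponent, while bounding the normalized Lemma \ref{lemma4.2} error by $(M^H)^{-s/4}(X/M^b)^{-\del}\le M^{-3sk^Rb}$, yields the claim at $n=1$.

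For the inductive step $n\to n+1$, I bound each factor $\Tet_n(m_1,\ldots,m_n;\bfh)$ of the hypothesis individually. In the trivial case $\Tet_n\ll M^{-3sk^Rb}$, any choice of $h_{n+1}$ works: since each $\Tet_{n+1}\ge M^{-3sk^Rb}$ and $\sum_{m_{n+1}}\phi_{m_{n+1}}=1-\phi^*<1$, one has
$$\prod_{m_{n+1}}\Tet_{n+1}(\bfm;\bfh)^{\phi_{m_{n+1}}}\ge M^{-3sk^Rb(1-\phi^*)}\gg M^{-3sk^Rb}\gg\Tet_n.$$
Otherwise, $\Tet_n\ll(X/M^b)^{-\Lam-\del}\llbracket K_{a_n,b_n}^{r,r}\rrbracket$, and I repeat the base-case maneuver: Lemma \ref{lemma7.2} applied to $K_{a_n,b_n}^{r,r}(X)$ with $a'=a_n$ produces $b_{m_{n+1}}^{(n+1)}=(k-m_{n+1})b_n-m_{n+1}a_n$, and Lemma \ref{lemma4.2} then supplies $h_{n+1}(\bfm)\in[0,H)$ with $b_{n+1}=b_{m_{n+1}}^{(n+1)}+h_{n+1}$ satisfying (\ref{7.7}). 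The inequality $(X/M^{b_n})^{(\Lam+\del)\phi^*}\le(X/M^b)^{(\Lam+\del)\phi^*}$, valid since $b_n\ge b$ and $\Lam+\del\ge 0$, lets the $(X/M^b)^{-\Lam-\del}$ normalization telescope; the error term acquires an extra factor $M^{(b-b_n)\Lam}\le 1$ and is controlled exactly as before. Thus $\Tet_n\ll\prod_{m_{n+1}}\Tet_{n+1}^{\phi_{m_{n+1}}}$, and substituting into the level-$n$ bound closes the induction.

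The main technical hurdle is the verification that the hypotheses of Lemma \ref{lemma7.2} survive at every level: namely $a_n\le b_n/\sqrt{k}$, $(k-r+1)b_n\ge ra_n$, and $b_n\le\tet^{-1}$. From (\ref{7.7}), the inductive identity $a_n=b_{n-1}$, and the inductive bound $a_{n-1}\le b_{n-1}/\sqrt{k}$, one obtains
$$\frac{b_n}{b_{n-1}}\ge k-m_n-m_n\frac{a_{n-1}}{b_{n-1}}\ge k-(r-1)\Bigl(1+\frac{1}{\sqrt{k}}\Bigr).$$
Since $r\le r_0=k-\lceil 2\sqrt{k}\rceil+2$, the right-hand side exceeds $\sqrt{k}$, which propagates both $a_n=b_{n-1}\le b_n/\sqrt{k}$ and, via the bound $r/(k-r+1)\le\sqrt{k}$ in the same regime, the inequality $(k-r+1)b_n\ge ra_n$. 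The crude estimate $b_n\le k^nb+15k^Rb\cdot(k^n-1)/(k-1)\le 16k^{R+n}b\le 16k^{2R}b$ combined with the hypothesis $b\le(16Rk^{2R}\tet)^{-1}$ delivers $b_n\le(R\tet)^{-1}\le\tet^{-1}$ and, provided $R$ is chosen sufficiently large in terms of $k$, also $b_{m_{n+1}}^{(n+1)}+H\le(2\tet)^{-1}$, as Lemma \ref{lemma4.2} demands.
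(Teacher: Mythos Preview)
Your proof is correct and follows essentially the same approach as the paper's: induction on the depth $n$, with the base case obtained by applying Lemma~\ref{lemma7.2} with $a'=a_0=\lfloor b/\sqrt{k}\rfloor$ followed by Lemma~\ref{lemma4.2}, and the inductive step splitting each $\Tet_n$ into the trivial case $\Tet_n\ll M^{-3sk^Rb}$ and the substantive case handled by reapplying Lemmata~\ref{lemma7.2} and~\ref{lemma4.2} at level $(a_n,b_n)$. Your explicit verification that the hypotheses $a_n\le b_n/\sqrt{k}$, $(k-r+1)b_n\ge ra_n$, and $b_n+H\le(2\tet)^{-1}$ propagate through the iteration is more detailed than the paper's account, which defers some of this to the discussion after Lemma~\ref{lemma6.1} and to \S\ref{lemma8.2}'s proof, but the substance is identical.
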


\begin{proof} We prove by induction on $l$ that when $1\le l\le R$, one has the upper bound
\begin{equation}\label{7.9}
(X/M^b)^{-\Lam-\del}\llbracket K_{a,b}^{r,r}(X)\rrbracket \ll \prod_{\bfm\in [0,r-1]^l}
\Tet_l(\bfm;\bfh)^{\phi_{m_1}\ldots \phi_{m_l}}.
\end{equation}
We observe first that, as a consequence of (\ref{7.3}) and Lemma \ref{lemma7.2}, one has
\begin{equation}\label{7.10}
(X/M^b)^{-\Lam-\del}\llbracket K_{a,b}^{r,r}(X)\rrbracket \ll \prod_{m=0}^{r-1}
\left( (X/M^b)^{-\Lam-\del}\llbracket I_{b,b_m^*}^{r,r}(X)\rrbracket \right)^{\phi_m},
\end{equation}
where
$$b_m^*=(k-m)b-m\lfloor b/\sqrt{k}\rfloor .$$
Moreover, it follows from Lemma \ref{lemma4.2} that for each $m$ with $0\le m\le r-1$, there exists an 
integer $h=h(m)$, with $0\le h\le 15k^Rb$, having the property that
$$I_{b,b_m^*}^{r,r}(X)\ll (M^h)^{2s/3}K_{b,b_m^*+h}^{r,r}(X)+
(M^{15k^Rb})^{-s/4}(X/M^{b_m^*})^s(X/M^b)^{\lam-s},$$
whence from (\ref{2.22}) and (\ref{2.23}) we discern that
\begin{equation}\label{7.11}
\llbracket I_{b,b_m^*}^{r,r}(X)\rrbracket \ll M^{-hs/3}\llbracket K_{b,b_1}^{r,r}(X)\rrbracket 
+M^{-3sk^Rb}(X/M^b)^{\Lam+\del},
\end{equation}
where
$$b_1=(k-m)b_0-ma_0+h(m).$$
The inductive hypothesis (\ref{7.9}) therefore follows in the case $R=1$ by substituting (\ref{7.11}) into 
(\ref{7.10}). Notice here that we have discarded the factor $M^{-hs/3}$ from (\ref{7.11}), since at this 
stage of our argument, it serves only as an unwelcome complication.\par

Suppose next that the inductive hypothesis (\ref{7.9}) holds for $1\le l<L$, with some integer $L$ 
satisfying $L\le R$. Consider the quantity $\Tet_{L-1}(\bfm;\bfh)$ for a given tuple 
$\bfm\in [0,r-1]^{L-1}$ and a fixed tuple $\bfh=\bfh(\bfm)$. We distinguish two possibilities. If it is the 
case that
$$\Tet_{L-1}(\bfm;\bfh)\ll M^{-3sk^Rb},$$
then from (\ref{7.3}) one finds that
$$\Tet_{L-1}(\bfm;\bfh)=\left( \prod_{m_L=0}^{r-1}(M^{-3sk^Rb})^{\phi_{m_L}}\right)^{s/r},$$
so that
\begin{equation}\label{7.12}
\Tet_{L-1}(\bfm;\bfh)\ll \prod_{m_L=0}^{r-1}(M^{-3sk^Rb})^{\phi_{m_L}}\ll \prod_{m_L=0}^{r-1}
\Tet_L(\bfm,m_L;\bfh,0)^{\phi_{m_L}}.
\end{equation}

\par When
$$\Tet_{L-1}(\bfm;\bfh)\ll (X/M^b)^{-\Lam-\del}\llbracket K_{a_{L-1},b_{L-1}}^{r,r}(X)\rrbracket ,$$
meanwhile, we apply Lemma \ref{lemma7.2} to obtain the bound
\begin{equation}\label{7.13}
\Tet_{L-1}(\bfm;\bfh)\ll \left(\frac{X}{M^b}\right)^{-\Lam-\del}
\left(\frac{X}{M^{b_{L-1}}}\right)^{(\Lam+\del)(1-r/s)}\prod_{m_L=0}^{r-1} 
\llbracket I_{b_{L-1},b_{m_L}^*}^{r,r}(X)\rrbracket^{\phi_{m_L}},
\end{equation}
where
$$b_{m_L}^*=(k-m_L)b_{L-1}-ma_{L-1}.$$
Notice here that the hypothesis $b\le (16Rk^{2R}\tet)^{-1}$ ensures that $b_{L-1}\le \tet^{-1}$, so that 
Lemma \ref{lemma7.2} is applicable. Again invoking Lemma \ref{lemma4.2}, we find that for each integer
 $m_L$ with $0\le m_L\le r-1$, there exists an integer $h_L=h_L(\bfm,m_L)$, with 
$0\le h_L\le 15k^Rb$, having the property that
$$\llbracket I_{b_{L-1},b_{m_L}^*}^{r,r}(X)\rrbracket \ll M^{-h_Ls/3}
\llbracket K_{b_{L-1},b_L}^{r,r}(X)\rrbracket +M^{-3sk^Rb}(X/M^{b_{L-1}})^{\Lam+\del}.$$
Thus we deduce that
\begin{align*}
(X/M^b)^{-\Lam-\del}\llbracket I_{b_{L-1},b_{m_L}^*}^{r,r}(X)\rrbracket &\ll 
(X/M^b)^{-\Lam-\del}\llbracket K_{a_L,b_L}^{r,r}(X)\rrbracket +M^{-3sk^Rb}\\
&\ll \Tet_L(\bfm,m_L;\bfh,h_L).
\end{align*}
On substituting this estimate into (\ref{7.13}), we deduce  from (\ref{7.3}) that
\begin{align}
\Tet_{L-1}(\bfm;\bfh)&\ll (M^{b-b_{L-1}})^{(1-r/s)(\Lam+\del)}
\prod_{m_L=0}^{r-1}\Tet_L(\bfm,m_L;\bfh,h_L)^{\phi_{m_L}}\notag \\
&\ll \prod_{m_L=0}^{r-1}\Tet_L(\bfm,m_L;\bfh,h_L)^{\phi_{m_L}}.\label{7.14}
\end{align}
Applying this estimate in combination with (\ref{7.12}) within the case $l=L-1$ of the inductive hypothesis 
(\ref{7.9}), we conclude that for some choice of the integer $h_L=h_L(\bfm)$, one has the upper bound
$$(X/M^b)^{-\Lam-\del}\llbracket K_{a,b}(X)\rrbracket \ll 
\prod_{\bfm\in [0,r-1]^{L-1}}\prod_{m_L=0}^{r-1}
\Tet_L(\bfm,m_L;\bfh,h_L)^{\phi_{m_1}\ldots \phi_{m_L}}.$$
This confirms the inductive hypothesis (\ref{7.9}) for $l=L$, and thus the conclusion of the lemma follows 
by induction.
\end{proof}

We remark that in obtaining the estimate (\ref{7.14}), we made use of the trivial lower bound 
$b_{L-1}\ge b$. By discarding the power $M^{b-b_{L-1}}$ at this point, we are throwing away potentially 
useful information. However, it transpires that the weak information made available by the factor $M^b$ in 
the conclusion of Lemma \ref{lemma7.3} already suffices for our purposes in the main iteration.

\section{The latent monograde process} Our objective in this section is to convert the block estimate 
encoded in Lemma \ref{lemma7.3} into a single monograde estimate that can be incorporated into our 
iterative method. We begin by recalling an elementary lemma from our previous work \cite{Woo2014}.

\begin{lemma}\label{lemma8.1}
Suppose that $z_0,\ldots ,z_l\in \dbC$, and that $\bet_i$ and $\gam_i$ are positive real numbers for 
$0\le i\le l$. Put $\Ome=\bet_0\gam_0+\ldots +\bet_l\gam_l$. Then one has
$$|z_0^{\bet_0}\ldots z_l^{\bet_l}|\le \sum_{i=0}^l|z_i|^{\Ome/\gam_i}.$$
\end{lemma}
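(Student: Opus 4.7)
The plan is to reduce the claim to the weighted AM--GM inequality with weights chosen to match the target exponents, and then to drop the resulting weights since each lies in $(0,1]$.

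First, I would set $w_i = \beta_i\gamma_i/\Omega$ for $0\le i\le l$. By the definition of $\Omega$, these are positive reals with $\sum_{i=0}^l w_i = 1$, so in particular $0<w_i\le 1$. Next, I would introduce the variables $y_i=|z_i|^{\Omega/\gamma_i}$, chosen precisely so that
$$\prod_{i=0}^l y_i^{w_i}=\prod_{i=0}^l |z_i|^{(\Omega/\gamma_i)(\beta_i\gamma_i/\Omega)}=\prod_{i=0}^l |z_i|^{\beta_i}.$$
This is where the bookkeeping exactly lines up: the exponents $\Omega/\gamma_i$ on the right hand side of the conclusion are engineered so that, when geometric-averaged against the weights $w_i$, one recovers the product on the left.

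Now I would apply the weighted arithmetic--geometric mean inequality, valid because the $w_i$ are nonnegative and sum to $1$, obtaining
$$\prod_{i=0}^l y_i^{w_i}\le \sum_{i=0}^l w_i y_i=\sum_{i=0}^l \frac{\beta_i\gamma_i}{\Omega}|z_i|^{\Omega/\gamma_i}.$$
Finally, since $0<w_i\le 1$ for each $i$, one has $w_i|z_i|^{\Omega/\gamma_i}\le |z_i|^{\Omega/\gamma_i}$, and summing over $i$ yields the claimed bound
$$\bigl|z_0^{\beta_0}\cdots z_l^{\beta_l}\bigr|=\prod_{i=0}^l|z_i|^{\beta_i}\le \sum_{i=0}^l|z_i|^{\Omega/\gamma_i}.$$

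There is really no obstacle here; the only subtlety is recognising the correct normalisation $w_i=\beta_i\gamma_i/\Omega$ so that weighted AM--GM produces precisely the target exponents on both sides, after which the dropping of the weights is immediate from $\sum_i w_i=1$.
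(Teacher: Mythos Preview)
Your proof is correct. The paper itself does not supply an argument here but merely cites the result from earlier work (\cite[Lemma~8.1]{Woo2014}); your weighted AM--GM derivation is precisely the standard proof of this elementary inequality and is in fact more informative than the paper's own treatment.
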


\begin{proof} This is \cite[Lemma 8.1]{Woo2014}.
\end{proof}

Before proceeding further, we introduce some additional notation. Define the positive number $s_0$ by 
means of the relation
\begin{equation}\label{8.1}
s_0^R=\frac{\tet_+^{R+1}-\tet_-^{R+1}}{\tet_+-\tet_-}-\frac{\tet_+\tet_-}{r\sqrt{k}}
\left( \frac{\tet_+^R-\tet_-^R}{\tet_+-\tet_-}\right) ,
\end{equation}
in which $\tet_\pm$ are defined as in (\ref{2.6}). We recall that, in view of (\ref{2.7}), one has 
$s<s_0$. Next, casting an eye toward the iterative relations (\ref{7.6}) and (\ref{7.7}), we define 
new sequences $(\atil_n)=(\atil_n(\bfm))$ and $(\btil_n)=(\btil_n(\bfm))$ by means of the relations
\begin{equation}\label{8.2}
\atil_0=1/\sqrt{k}\quad \text{and}\quad \btil_0=1,
\end{equation}
and
\begin{equation}\label{8.3}
\atil_n=\btil_{n-1}\quad \text{and}\quad \btil_n=(k-m_n)\btil_{n-1}-m_n\atil_{n-1}\quad (1\le n\le R).
\end{equation}
We then define
\begin{equation}\label{8.4}
k_\bfm=\btil_R(\bfm)\quad \text{and}\quad \rho_\bfm=\btil_R(\bfm)(s/s_0)^R\quad \text{for}\quad 
\bfm\in [0,r-1]^R.
\end{equation}
The motivation for defining the sequences $(\atil_n)$ and $(\btil_n)$ is to provide a base pair of sequences 
corresponding to the simplified situation in which $h_n(\bfm)=0$ for all $n$ and $\bfm$. It transpires that 
the corresponding sequences $(a_n)$ and $(b_n)$, equipped with potentially positive values of $h_n(\bfm)$, 
may be bounded below by the sequences $(\atil_n)$ and $(\btil_n)$, although this turns out to be less simple 
to establish than might be supposed.

\begin{lemma}\label{lemma8.2} Suppose that $\Lam\ge 0$, and let $a$ and $b$ be integers with 
$$0\le a<b\le (20Rk^{2R}\tet)^{-1}\quad \text{and}\quad (k-r+1)b\ge ra,$$
and suppose further that $a\le b/\sqrt{k}$. Suppose in addition that there are real numbers $\psi$, $c$ and $\gam$, with
$$0\le c\le (2\del)^{-1}\tet,\quad \gam\ge -sb\quad \text{and}\quad \psi\ge 0,$$
such that
\begin{equation}\label{8.5}
X^\Lam M^{\Lam \psi}\ll X^{c\del}M^{-\gam}\llbracket K_{a,b}^{r,r}(X)\rrbracket .
\end{equation}
Then, for some $\bfm\in [0,r-1]^R$, there is a real number $h$ with $0\le h\le 16k^{2R}b$, and positive 
integers $a'$ and $b'$ with $a'\le b'/\sqrt{k}$, such that
\begin{equation}\label{8.6}
X^\Lam M^{\Lam \psi'}\ll X^{c'\del}M^{-\gam'}\llbracket K_{a',b'}^{r,r}(X)\rrbracket ,
\end{equation}
where $\psi'$, $c'$, $\gam'$ and $b'$ are real numbers satisfying the conditions
$$\psi'=\rho_\bfm\left(\psi+\left(1-r/s\right)b\right),\quad c'=\rho_\bfm(c+1),\quad 
\gam'=\rho_\bfm\gam,\quad b'=k_\bfm b+h.$$
Moreover, the real number $k_\bfm$ satisfies $2^R\le k_\bfm\le k^R$.
\end{lemma}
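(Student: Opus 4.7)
The plan is to apply Lemma \ref{lemma7.3} to the hypothesis \eqref{8.5}, then use Lemma \ref{lemma8.1} to extract a single dominant multi-index $\bfm \in [0,r-1]^R$ from the resulting product of $\Tet_R$ values, and finally analyze the two alternatives in the definition \eqref{7.8} of $\Tet_R$ to produce \eqref{8.6}. First, combining \eqref{8.5} with the upper bound of Lemma \ref{lemma7.3} applied to $\llbracket K_{a,b}^{r,r}(X)\rrbracket$ yields, for a fixed tuple $\bfh$ with $0\le h_n(\bfm)\le 15k^Rb$,
\[X^{-(c+1)\del}M^{\Lam\psi+\gam+b(\Lam+\del)}\ll \prod_{\bfm\in[0,r-1]^R}\Tet_R(\bfm;\bfh)^{\phi_{m_1}\cdots\phi_{m_R}}.\]
I then apply Lemma \ref{lemma8.1} with weights $\bet_\bfm=\phi_{m_1}\cdots\phi_{m_R}$, choosing the auxiliary parameters $\gam_\bfm$ so that $\Ome/\gam_\bfm=\rho_\bfm$ uniformly in $\bfm$. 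The self-consistency condition $\Ome=\sum_\bfm\bet_\bfm\gam_\bfm$ then reduces to the combinatorial identity
\[\sum_{\bfm\in[0,r-1]^R}\frac{\phi_{m_1}\cdots\phi_{m_R}}{\btil_R(\bfm)}=\left(\frac{s}{s_0}\right)^R,\]
which is precisely the purpose of the definition \eqref{8.1} of $s_0$.

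To establish this identity, I would induct on $R$, using the recurrence \eqref{8.3} in the form $\btil_R=(k-m_R)\btil_{R-1}-m_R\btil_{R-2}$ together with the partial-fraction identity $\phi_m/(s-r)=1/s_{m+1}-1/s_m$. Summing over $m_R$ at the innermost step produces a telescoping structure, and the resulting averaged recurrence has characteristic polynomial $\tet^2-\gra\tet+\grb$, which matches \eqref{2.6}. The correction term $-\tet_+\tet_-/(r\sqrt{k})$ appearing in \eqref{8.1} arises from the boundary condition $\atil_0=1/\sqrt{k}$ from \eqref{8.2}. Solving the recurrence in closed form reproduces exactly the expression for $s_0^R$.

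With the identity in hand, Lemma \ref{lemma8.1} furnishes a single $\bfm\in[0,r-1]^R$ with
\[X^{-(c+1)\del}M^{\Lam\psi+\gam+b(\Lam+\del)}\ll \Tet_R(\bfm;\bfh)^{\rho_\bfm}.\]
I then split into two cases according to \eqref{7.8}. If the error term $M^{-3sk^Rb}$ dominates $\Tet_R(\bfm;\bfh)$, the assumptions $c\le(2\del)^{-1}\tet$, $\gam\ge -sb$, $b\le(20Rk^{2R}\tet)^{-1}$, and $\Lam\ge 0$, together with the lower bound $\rho_\bfm\ge 2^R(s/s_0)^R$ and the parameter choices \eqref{2.8}, force a contradiction for $X=X_l$ sufficiently large. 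Otherwise $\Tet_R(\bfm;\bfh)\ll(X/M^b)^{-\Lam-\del}\llbracket K_{a_R,b_R}^{r,r}(X)\rrbracket$; substituting, taking $\rho_\bfm$-th roots, and compensating with appropriate factors of $X^\Lam$ (permitted since $\Lam\ge 0$), one verifies \eqref{8.6} with $a'=a_R$, $b'=b_R=k_\bfm b+h$ for $h\le 16k^{2R}b$, and the stated $\psi'$, $c'$, $\gam'$. The bounds $2^R\le k_\bfm\le k^R$ and $a'\le b'/\sqrt{k}$ follow by induction on \eqref{8.2}--\eqref{8.3}, using $r\le r_0$ from \eqref{2.2} to ensure $k-m_n-m_n/\sqrt{k}\ge\sqrt{k}\ge 2$ at each step.

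The principal obstacle is the combinatorial identity above, whose proof couples the linear recurrence for $(\btil_n)$ to the averaging structure supplied by the weights $\phi_m$, with precisely the right boundary contribution to match \eqref{8.1}. The subsequent exponent bookkeeping, while intricate, is routine by design: the parameters $\psi'$, $c'$, $\gam'$ in the statement of the lemma are set up to absorb exactly the factors of $\rho_\bfm$ arising from Lemma \ref{lemma8.1}.
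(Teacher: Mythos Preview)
Your overall architecture is right, but the application of Lemma~\ref{lemma8.1} is inverted, and this breaks the argument. You choose the auxiliary parameters so that $\Ome/\gam_\bfm=\rho_\bfm$; the correct choice is $\gam_\bfm=\btil_R(\bfm)$, which gives $\Ome/\gam_\bfm=B_R/\btil_R(\bfm)=1/\rho_\bfm$. With the paper's choice, Lemma~\ref{lemma8.1} yields, for some $\bfm$,
\[
\bigl(X^{-\Lam}\llbracket K_{a_R,b_R}^{r,r}(X)\rrbracket+M^{-3sk^Rb}\bigr)^{1/\rho_\bfm}\gg X^{-(c+1)\del}M^{\Lam(\psi+(1-r/s)b)+\gam},
\]
and raising both sides to the power $\rho_\bfm$ produces exactly $c'=\rho_\bfm(c+1)$, $\psi'=\rho_\bfm(\psi+(1-r/s)b)$, $\gam'=\rho_\bfm\gam$. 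Under your choice the exponent on $\Tet_R$ is $\rho_\bfm$, so ``taking $\rho_\bfm$-th roots'' delivers $c'=(c+1)/\rho_\bfm$ and so on, contradicting the statement you are trying to prove.

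Correspondingly, the combinatorial identity is the wrong one. The self-consistency condition with $\gam_\bfm=\btil_R(\bfm)$ is
\[
\Ome=B_R=\sum_{\bfm\in[0,r-1]^R}\phi_{m_1}\cdots\phi_{m_R}\,\btil_R(\bfm)=(s_0/s)^R,
\]
with $\btil_R(\bfm)$ as a \emph{multiplicative} weight, and this is precisely what the definition \eqref{8.1} of $s_0$ encodes. Your proposed identity $\sum_\bfm\phi_{m_1}\cdots\phi_{m_R}/\btil_R(\bfm)=(s/s_0)^R$ is not equivalent (the two sums agree only when $\btil_R(\bfm)$ is constant in $\bfm$, which it is not), and it is not what \eqref{8.1} asserts. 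The verification proceeds by setting $B_n=\sum_\bfm\bet_\bfm^{(n)}\btil_n(\bfm)$ and $A_n=\sum_\bfm\bet_\bfm^{(n)}\atil_n(\bfm)$, deriving from \eqref{8.3} the linear recurrence $s^2B_{n+2}-s\gra B_{n+1}+\grb B_n=0$ with initial data $sB_1=\gra-\grb/(r\sqrt{k})$ and $s^2B_2=\gra(\gra-\grb/(r\sqrt{k}))-\grb$, and then solving explicitly in terms of $\tet_\pm$; the boundary contribution from $\atil_0=1/\sqrt{k}$ enters through the initial data, not through a telescoping sum over $m_R$. Once you make this correction, the remainder of your outline (disposing of the $M^{-3sk^Rb}$ alternative, and verifying $a_R\le b_R/\sqrt{k}$ and $k_\bfm b\le b_R\le k_\bfm b+16k^{2R}b$ via the recursions) goes through essentially as in the paper.
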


\begin{proof} We deduce from the postulated bound (\ref{8.5}) and Lemma \ref{lemma7.3} that 
there exists a choice of the tuple $\bfh=\bfh(\bfm)$, with $0\le h_n(\bfm)\le 15k^Rb$ 
$(1\le n\le R)$, such that
$$X^\Lam M^{\Lam \psi}\ll X^{(c+1)\del}M^{-\gam}(X/M^b)^\Lam \prod_{\bfm\in [0,r-1]^R}
\Tet_R(\bfm;\bfh)^{\phi_{m_1}\ldots \phi_{m_R}}.$$
Consequently, one has
$$\prod_{\bfm\in [0,r-1]^R}\Tet_R(\bfm;\bfh)^{\phi_{m_1}\ldots \phi_{m_R}}\gg 
X^{-(c+1)\del}M^{\Lam (\psi+b)+\gam}.$$
Note that, in view of (\ref{7.3}), one has
\begin{equation}\label{8.7}
\sum_{m=0}^{r-1}\phi_m=r/s,
\end{equation}
so that
$$\sum_{\bfm\in [0,r-1]^R}\phi_{m_1}\ldots \phi_{m_R}=(r/s)^R\le r/s.$$
Then we deduce from the definition (\ref{7.8}) of $\Tet_n(\bfm;\bfh)$ that
\begin{align}\label{8.8}
\prod_{\bfm\in [0,r-1]^R}&\left( X^{-\Lam}\llbracket K_{a_R,b_R}^{r,r}(X)\rrbracket 
+M^{-3sk^Rb}\right)^{\phi_{m_1}\ldots \phi_{m_R}}\notag \\
&\ \ \ \ \ \ \ \ \ \ \ \ \ \ \gg X^{-(c+1)\del}M^{\Lam (\psi+(1-r/s)b)+\gam}.
\end{align}

\par In preparation for our application of Lemma \ref{lemma8.1}, we examine the exponents 
$\phi_{m_1}\ldots \phi_{m_R}$ occurring in the lower bound (\ref{8.8}). Our plan is to 
apply Lemma \ref{lemma8.1} with exponents given by
$$\bet_\bfm^{(n)}=\phi_{m_1}\ldots \phi_{m_n}\quad \text{and}\quad 
\gam_\bfm^{(n)}=\btil_n(\bfm)\quad (\bfm\in [0,r-1]^n),$$
in the natural sense. In order to analyse the quantity $\Ome$ that will emerge from the application 
of this lemma, we define 
$$B_n=\sum_{\bfm\in [0,r-1]^n}\bet_\bfm^{(n)}\btil_n(\bfm)\quad \text{and}\quad 
A_n=\sum_{\bfm\in [0,r-1]^n}\bet_\bfm^{(n)}\atil_n(\bfm),$$
and then put $\Ome=B_R$. From the iterative formulae (\ref{8.2}) and (\ref{8.3}), we obtain for 
$n\ge 1$ the relations
\begin{align}
B_{n+1}&=\sum_{m_{n+1}=0}^{r-1}\sum_{\bfm\in [0,r-1]^n}\left( (k-m_{n+1})\btil_n(\bfm)-
m_{n+1}\atil_n(\bfm)\right)\phi_{m_1}\ldots \phi_{m_{n+1}}\notag \\
&=B_n\sum_{m=0}^{r-1}(k-m)\phi_m-A_n\sum_{m=0}^{r-1}m\phi_m,\label{8.9}
\end{align}
and
\begin{equation}\label{8.10}
A_{n+1}=\sum_{m_{n+1}=0}^{r-1}
\sum_{\bfm\in [0,r-1]^n}\btil_n(\bfm)\phi_{m_1}\ldots \phi_{m_{n+1}}=
B_n\sum_{m=0}^{r-1}\phi_m.
\end{equation}

\par We observe that from (\ref{7.1}), one has
$$\sum_{m=0}^{r-1}(k-m)\phi_m=\sum_{m=0}^{r-1}\frac{(k-m)(s-r)}{(s-m)(s-m-1)}.$$
The summands on the right hand side here may be rewritten in the shape
$$\frac{(r-m)(k-m)}{s-m}-\frac{(r-m-1)(k-m-1)}{s-m-1}-\frac{r-m-1}{s-m-1}.$$
Hence we obtain
$$s\sum_{m=0}^{r-1}(k-m)\phi_m=kr-s\sum_{l=1}^r\frac{r-l}{s-l}=kr-\tfrac{1}{2}r(r-1)-\Del,$$
in which $\Del$ is defined via (\ref{2.3}). In addition, one has
\begin{align*}
s\sum_{m=0}^{r-1}m\phi_m&=sk\sum_{m=0}^{r-1}\phi_m-s\sum_{m=0}^{r-1}(k-m)\phi_m\\
&=kr-(kr-\tfrac{1}{2}r(r-1)-\Del)\\
&=\tfrac{1}{2}r(r-1)+\Del .
\end{align*}

\par Utilising the formulae just obtained, we conclude from (\ref{8.9}) that for $n\ge 1$, one has
$$sB_{n+1}=(kr-\tfrac{1}{2}r(r-1)-\Del)B_n-(\tfrac{1}{2}r(r-1)+\Del)A_n,$$
while from (\ref{8.7}) and (\ref{8.10}), one sees that
$$sA_{n+1}=rB_n.$$
Then, on recalling (\ref{2.5}), we arrive at the iterative relation
\begin{equation}\label{8.11}
s^2B_{n+2}-s\gra B_{n+1}+\grb B_n=0\quad (n\ge 1).
\end{equation}
In addition, also from (\ref{2.5}) and (\ref{8.2}), one has the initial data
\begin{equation}\label{8.12}
sB_1=s\sum_{m=0}^{r-1}\left( (k-m)\btil_0-m\atil_0\right)\phi_m=\gra-\grb/(r\sqrt{k}),
\end{equation}
and from (\ref{8.2}) and (\ref{8.7}), one finds that
$$sA_1=s\sum_{m=0}^{r-1}\btil_0\phi_m=r.$$
Thus we see also that
\begin{align}
s^2B_2&=s^2\sum_{m=0}^{r-1}\left( (k-m)B_1-mA_1\right)\phi_m=s(\gra B_1-\grb A_1/r)\notag \\
&=\gra(\gra-\grb/(r\sqrt{k}))-\grb .\label{8.13}
\end{align}

\par On recalling (\ref{2.6}), one finds that the recurrence formula (\ref{8.11}) has a solution of the shape
$$s^nB_n=\sig_+\tet_+^n+\sig_-\tet_-^n\quad (n\ge 1),$$
where, in view of (\ref{8.12}) and (\ref{8.13}), one has
$$\sig_+\tet_++\sig_-\tet_-=sB_1=\gra -\grb/(r\sqrt{k})$$
and
$$\sig_+\tet_+^2+\sig_-\tet_-^2=s^2B_2=\gra(\gra-\grb/(r\sqrt{k}))-\grb.$$
Since $\gra=\tet_++\tet_-$ and $\grb=\tet_+\tet_-$, we therefore deduce that
$$s^nB_n=\frac{\tet_+^{n+1}-\tet_-^{n+1}}{\tet_+-\tet_-}-\frac{\tet_+\tet_-}{r\sqrt{k}}
\left( \frac{\tet_+^n-\tet_-^n}{\tet_+-\tet_-}\right) .$$
In particular, on recalling (\ref{8.1}), we find that $s^RB_R=s_0^R$, so that
\begin{equation}\label{8.14}
B_R=(s_0/s)^R.
\end{equation}
Also, therefore, it follows from (\ref{2.7}) that $s<s_0$, and hence also that $B_R>1$.\par

Returning now to the application of Lemma \ref{lemma8.1}, we note first that $\Ome=B_R$, and hence 
(\ref{8.8}) yields the relation
$$\sum_{\bfm\in [0,r-1]^R}\left( X^{-\Lam}\llbracket 
K_{a_R,b_R}^{r,r}(X)\rrbracket +M^{-3sk^Rb}\right)^{B_R/\btil_R(\bfm)}\gg 
X^{-(c+1)\del}M^{\Lam (\psi+(1-r/s)b)+\gam}.$$
But in view of (\ref{8.4}) and (\ref{8.14}), one has $\btil_R(\bfm)/B_R=\rho_\bfm$, and thus we find 
that for some tuple $\bfm\in [0,r-1]^R$, one has
$$X^{-\Lam}\llbracket K_{a_R,b_R}^{r,r}(X)\rrbracket +M^{-3sk^Rb}\gg X^{-\rho_\bfm(c+1)\del}
M^{\Lam \rho_\bfm(\psi+(1-r/s)b)+\rho_\bfm\gam},$$
whence,
\begin{equation}\label{8.15}
X^{-\Lam}\llbracket K_{a_R,b_R}^{r,r}(X)\rrbracket +M^{-3sk^Rb}\gg X^{-c'\del}
M^{\Lam \psi'+\gam'}.
\end{equation}

\par Two further issues remain to be resolved, the first being the removal of the term $M^{-3sk^Rb}$ on 
the left hand side of (\ref{8.15}). We observe that the relations (\ref{8.3}) ensure that 
$\btil_R(\bfm)\le k^R$, and hence (\ref{8.4}) reveals that $\rho_\bfm<\btil_R(\bfm)\le k^R$. Our 
hypothesis on $c$ therefore ensures that $c'\del<2k^Rc\del\le k^R\tet$, so that
$$X^{-c'\del}M^{\Lam \psi'+\gam'}\ge M^{-k^R+\rho_\bfm \gam}> M^{-k^R-sk^Rb}.$$
Since
$$M^{-3sk^Rb}\le M^{-2sk^R-sk^Rb},$$
it follows from (\ref{8.15}) that
\begin{equation}\label{8.16}
X^{-\Lam}\llbracket K_{a_R,b_R}^{r,r}(X)\rrbracket \gg X^{-c'\del}M^{\Lam \psi'+\gam'}.
\end{equation}

\par Our final task consists of extracting appropriate constraints on the parameters $a_R$ and $b_R$. We 
note first that from (\ref{7.6}) one has $a_0=\lfloor b/\sqrt{k}\rfloor\le b/\sqrt{k}$, so that on writing
$$h_1^*(\bfm)=h_1(\bfm)+m_1(b/\sqrt{k}-a_0)$$
and
$$h_n^*(\bfm)=h_n(\bfm)\quad (n>1),$$
we have
$$h_1^*(\bfm)\le h_1(\bfm)+m_1\le h_1(\bfm)+k<16k^Rb$$
and
$$h_n^*(\bfm)=h_n(\bfm)\le 16k^Rb\quad (n>1).$$
Define the sequences $(a_n^*)=(a_n^*(\bfm;\bfh))$ and $(b_n^*)=(b_n^*(\bfm;\bfh))$ by means of 
the relations
$$a_n^*=b_{n-1}^*\quad \text{and}\quad b_n^*=(k-m_n)b_{n-1}^*-m_na_{n-1}^*+h_n^*(\bfm)
\quad (1\le n\le R),$$
where
$$a_0^*=b/\sqrt{k}\quad \text{and}\quad b_0^*=b.$$
Then a comparison with (\ref{7.6}) and (\ref{7.7}) reveals that $a_n^*=a_n$ and $b_n^*=b_n$ for 
$n\ge 1$. Moreover, it is apparent from (\ref{8.2}) and (\ref{8.3}) that $\atil_n b=a_n^*(\bfm;{\bf0})$ 
and $\btil_nb=b_n^*(\bfm;\bf0)$ for $n\ge 0$. We claim that for $n\ge 0$, one has
\begin{equation}\label{8.17}
b_n^*(\bfm;{\bf0})\le b_n^*(\bfm;\bfh)\le b_n^*(\bfm;{\bf0})+16k^{R+n}b.
\end{equation}
The validity of these inequalities for $n=0$ is immediate from the definition of $b_n^*(\bfm;\bfh)$. We 
use an inductive argument to establish (\ref{8.17}) for $n\ge 1$, though this requires some discussion.
\par

Observe first that, by linearity, the recurrence sequence $b_n^*(\bfm;\bfh)$ is given by the formula
\begin{equation}\label{8.18}
b_n^*(\bfm;\bfh)=b_n^*(\bfm;{\bf0})+\sum_{l=1}^Rc_{n,l}(\bfm;\bfh)\quad (0\le n\le R),
\end{equation}
in which $c_{n,l}(\bfm;\bfh)$ is determined by the relations
$$c_{l,l}(\bfm;\bfh)=h_l^*(\bfm)\quad \text{and}\quad c_{n,l}(\bfm;\bfh)=0\quad (n<l),$$
with
\begin{equation}\label{8.19}
c_{n+1,l}(\bfm;\bfh)=(k-m_{n+1})c_{n,l}(\bfm;\bfh)-m_{n+1}c_{n-1,l}(\bfm;\bfh)\quad (n>l).
\end{equation}
We recall our assumption that $r\le k-\lceil 2\sqrt{k}\rceil +2$, which ensures that
$$m_{n+1}\le r-1\le k-\lceil 2\sqrt{k}\rceil +1.$$
In view of this upper bound, we are able to prove by induction that
\begin{equation}\label{8.20}
c_{n+1,l}(\bfm;\bfh)>\sqrt{k}c_{n,l}(\bfm;\bfh)
\end{equation}
for each $n$. In order to confirm this assertion, note first that such is the case when $n=l-1$. Granted that 
$c_{n,l}(\bfm;\bfh)>\sqrt{k}c_{n-1,l}(\bfm;\bfh)$, meanwhile, one deduces from (\ref{8.19}) that
\begin{align}
c_{n+1,l}(\bfm;\bfh)&\ge (\lceil 2\sqrt{k}\rceil -1)c_{n,l}(\bfm;\bfh)-
(k-\lceil 2\sqrt{k}\rceil +1)c_{n,l}(\bfm;\bfh)/\sqrt{k}\notag \\
&\ge \left( (2\sqrt{k}-1)-(\sqrt{k}-2+1/\sqrt{k})\right) c_{n,l}(\bfm;\bfh)\notag \\
&>\sqrt{k}c_{n,l}(\bfm;\bfh),\label{8.21}
\end{align}
establishing this inductive hypothesis. Thus, in particular, one arrives at the lower bound 
$c_{n,l}(\bfm;\bfh)\ge 0$ for all $n$. On substituting this conclusion into (\ref{8.18}), we deduce that 
$b_n^*(\bfm;\bfh)\ge b_n^*(\bfm;{\bf0})$ for every $n$, confirming the first of the inequalities in 
(\ref{8.17}).\par

Next, making use of the lower bound $c_{n,l}(\bfm;\bfh)\ge 0$, just obtained, within (\ref{8.19}), 
we find that
$$c_{n+1,l}(\bfm;\bfh)\le kc_{n,l}(\bfm;\bfh)\quad (n>l),$$
so that
$$c_{n,l}(\bfm;\bfh)\le k^{n-l}h_l^*(\bfm)\quad (n\ge l).$$
We therefore deduce from (\ref{8.18}) and the bound $h_n^*(\bfm)\le 16k^Rb$ that
$$b_n^*(\bfm;\bfh)\le b_n^*(\bfm;{\bf0})+16k^Rb\sum_{l=1}^n k^{n-l}\le b_n^*(\bfm;{\bf0})+
16k^{R+n}b.$$
This confirms the second of the inequalities in (\ref{8.17}).\par

We now make use of the inequalities (\ref{8.17}). Observe first that in view of (\ref{8.4}), one has
$$b_R=b_R^*(\bfm;\bfh)\le b_R^*(\bfm;{\bf0})+16k^{2R}b=\btil_R(\bfm)b+16k^{2R}b=
k_\bfm b+16k^{2R}b.$$
In addition, one has
$$b_R=b_R^*(\bfm;\bfh)\ge b_R^*(\bfm;{\bf0})=\btil_R(\bfm)b=k_\bfm b.$$
Thus, there exists an integer $h_\bfm$, with $0\le h_\bfm\le 16k^{2R}b$, for which one has 
$b_R=k_\bfm b+h_\bfm$.\par

The argument confirming (\ref{8.21}) shows also that for each $n\ge 0$, one has
\begin{equation}\label{8.22}
b_{n+1}^*(\bfm;{\bf0})>\sqrt{k}b_n^*(\bfm;{\bf0}).
\end{equation}
Then it follows from (\ref{8.18}) and (\ref{8.20}) that
$$b_R^*(\bfm;\bfh)>\sqrt{k}b_{R-1}^*(\bfm;\bfh),$$
whence
$$a_R=b_{R-1}=b_{R-1}^*(\bfm;\bfh)<b_R^*(\bfm;\bfh)/\sqrt{k}=b_R/\sqrt{k}.$$
Then we deduce from (\ref{8.16}) that there exist integers $b'=b_R$, $a'=a_R$ and $h=h_\bfm$ satisfying 
the conditions
$$0\le h_\bfm\le 16k^{2R}b,\quad b'=k_\bfm b+h\quad \text{and}\quad a'\le b'/\sqrt{k},$$
and for which
$$X^{-\Lam }\llbracket K_{a',b'}^{r,r}(X)\rrbracket \gg X^{-c'\del}M^{\Lam \psi'+\gam'}.$$
The desired conclusion (\ref{8.6}) now follows, with all the associated conditions.\par

It now remains only to confirm the bounds on $k_\bfm$ asserted in the final line of the statement of the 
lemma. For this, we note that by applying (\ref{8.22}) in an inductive argument, one obtains from 
(\ref{8.4}) the lower bound
$$k_\bfm=\btil_R(\bfm)=b_R^*(\bfm;{\bf0})/b\ge (\sqrt{k})^Rb_0^*(\bfm;{\bf0})/b\ge 2^R.$$
Likewise, though more directly, one has
$$k_\bfm=\btil_R(\bfm)=b_R^*(\bfm;{\bf0})/b\le k^Rb_0^*(\bfm;{\bf0})/b=k^R.$$
Thus $2^R\le k_\bfm\le k^R$, and the proof of the lemma is complete.
\end{proof}

\section{The iterative process} We begin with a crude estimate of use at the conclusion of our argument.

\begin{lemma}\label{lemma9.1}
Suppose that $a$ and $b$ are integers with $0\le a<b\le (2\tet)^{-1}$. Then provided that $\Lam\ge 0$, 
one has
$$\llbracket K_{a,b}^{r,r}(X)\rrbracket \ll X^{\Lam+\del}.$$
\end{lemma}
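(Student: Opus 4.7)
The plan is to bound $K_{a,b}^{r,r}(X;\xi,\eta)$ by the pure mean value $\oint|\grf_a(\bfalp;\xi)|^{2r}|\grf_b(\bfalp;\eta)|^{2s}\d\bfalp$ at the Diophantine level, and then to combine Lemma \ref{lemma2.2} with the upper bound recorded in (\ref{2.24}). Fix integers $\xi$ and $\eta$ with $1\le \xi\le p^a$, $1\le \eta\le p^b$ and $\eta\not\equiv \xi\mmod{p}$. By orthogonality, $K_{a,b}^{r,r}(X;\xi,\eta)$ counts integral solutions of the system (\ref{2.14}) with $m=r$, in which the $r$ pairs $(x_i,y_i)$ satisfy $\bfx,\bfy\equiv \xi\mmod{p^a}$, the $s$ remaining pairs $(u_l,v_l)$ and $(w_n,z_n)$ all satisfy $\bfu,\bfv,\bfw,\bfz\equiv \eta\mmod{p^b}$, and certain distinctness conditions hold modulo $p^{a+1}$ or $p^{b+1}$. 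Discarding every distinctness condition enlarges the solution set to the one counted by $\oint|\grf_a(\bfalp;\xi)|^{2r}|\grf_b(\bfalp;\eta)|^{2s}\d\bfalp$, and so
\begin{equation*}
K_{a,b}^{r,r}(X;\xi,\eta)\le \oint|\grf_a(\bfalp;\xi)|^{2r}|\grf_b(\bfalp;\eta)|^{2s}\d\bfalp.
\end{equation*}

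Since $a,b\le (2\tet)^{-1}\le \tet^{-1}$, Lemma \ref{lemma2.2} with $u=r$ and $v=s$ yields
\begin{equation*}
\oint|\grf_a(\bfalp;\xi)|^{2r}|\grf_b(\bfalp;\eta)|^{2s}\d\bfalp\ll (J_{s+r}(X/M^a))^{r/(s+r)}(J_{s+r}(X/M^b))^{s/(s+r)}.
\end{equation*}
The hypothesis $a,b\le (2\tet)^{-1}$ also ensures $M^a,M^b\le X^{1/2}$, placing $X/M^a$ and $X/M^b$ in the range $Y\ge X^{1/2}$ where (\ref{2.24}) supplies $J_{s+r}(Y)\ll Y^{s+r+\Lam+\del}$. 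Substituting, extracting the factor $(X/M^a)^r(X/M^b)^s$, and noting that $\Lam\ge 0$ implies $\Lam+\del>0$ so that $X/M^a,X/M^b\le X$ can be used to absorb the residual factor into $X^{\Lam+\del}$, we obtain
\begin{equation*}
K_{a,b}^{r,r}(X;\xi,\eta)\ll (X/M^a)^r(X/M^b)^s\, X^{\Lam+\del}.
\end{equation*}
The claim for $a\ge 1$ then follows by taking maxima as in (\ref{2.17}) and invoking the normalisation (\ref{2.23}).

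It remains to address the case $a=0$, where $K_{0,b}^{r,r}(X)=\max_\eta \Ktil_b^{r,r}(X;\eta)$ via (\ref{2.20}) features $\grF^r(\bfalp;\eta)$ in place of $\grF_0^r(\bfalp;\xi)$. Here the $r$ pairs $(\bfx,\bfy)$ carry only the constraints $\bfx,\bfy\not\equiv \eta\mmod{p}$ together with mutual distinctness modulo $p$; discarding these replaces $\grf_a(\bfalp;\xi)$ with $\grf_0(\bfalp;0)=f_k(\bfalp;X)$, after which the identical chain of estimates carries through with $M^a=1$ and $X/M^a=X$. The only genuinely delicate step in the whole argument is the initial Diophantine relaxation — verifying that the constrained solution set counted by $K_{a,b}^{r,r}$ genuinely embeds into the unconstrained set counted by the product mean value; once this is secured, the remainder is a routine application of the two lemmata assembled in Section 2.
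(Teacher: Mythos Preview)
Your proof is correct and follows essentially the same route as the paper's own argument: relax the Diophantine constraints encoded in $\grF_a^r$ and $\grF_b^r$ to pass to $\oint|\grf_a|^{2r}|\grf_b|^{2s}\d\bfalp$, invoke Lemma~\ref{lemma2.2}, feed in the upper bound from (\ref{2.24}), and use $\Lam\ge 0$ together with $X/M^a,X/M^b\le X$ to collapse the residual factor into $X^{\Lam+\del}$. Your treatment is in fact more explicit than the paper's, which compresses the Diophantine relaxation into the phrase ``on considering the underlying Diophantine equations'' and does not separately address the $a=0$ case governed by (\ref{2.20}); your handling of that case is correct and a worthwhile clarification.
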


\begin{proof} On considering the underlying Diophantine equations, we deduce from Lemma 
\ref{lemma2.2} that
$$K_{a,b}^{r,r}(X)\ll \left( J_{s+r}(X/M^a)\right)^{r/(s+r)}\left( J_{s+r}(X/M^b)\right)^{s/(s+r)},$$
whence
$$\llbracket K_{a,b}^{r,r}(X)\rrbracket \ll \frac{X^\del 
\left( (X/M^a)^{r/(s+r)}(X/M^b)^{s/(s+r)}\right)^{s+r+\Lam}}{(X/M^a)^r(X/M^b)^s}
\le X^{\Lam+\del}.$$
This completes the proof of the lemma.
\end{proof}

We now come to the main event.

\begin{theorem}\label{theorem9.2} Suppose that $s$, $k$ and $r$ are natural numbers with $k\ge 4$, 
$$2\le r\le k-\lceil 2\sqrt{k}\rceil +2\quad \text{and}\quad \max\{2r-1,\tfrac{1}{2}r(r-1)+\Del\}\le s<s_1,$$
where
$$s_1=\tfrac{1}{2}(kr-\tfrac{1}{2}r(r-1)-\Del)
\left( 1+\sqrt{1-\frac{4r(\tfrac{1}{2}r(r-1)+\Del)}{(kr-\tfrac{1}{2}r(r-1)-\Del)^2}}\right) ,$$
and
$$\Del=\sum_{m=1}^{r-1}\frac{m(r-m)}{s-m}.$$
Then for each $\eps>0$, one has
$$J_{s+r}(X)\ll X^{s+r+\eps}.$$
\end{theorem}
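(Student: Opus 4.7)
The plan is to establish the theorem by contradiction. Under the hypotheses of the theorem, the entire infrastructure of Section 2 applies: the condition $s<s_1$ is exactly $s<\tet_+$ in view of (2.5) and (2.6), so the choice of $R$ satisfying (2.7) is admissible. Suppose that $\Lam:=\lam-(s+r)>0$, so that the lower bound $J_{s+r}(X)>X^{s+r+\Lam-\del}$ of (2.24) is non-trivial. The strategy is to iterate the monograde congruencing estimate of Lemma 8.2 sufficiently many times to clash with the crude bound of Lemma 9.1.

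We initialize the iteration using Lemma 5.1, which supplies a prime $p\in(M,2M]$ and an integer $h$ with $0\le h\le 4B$ for which
$$J_{s+r}(X)\ll M^{2sB+2sh/3}K_{0,B+h}^{r,r}(X).$$
Combining this with the lower bound from (2.24) and the normalization (2.23), we obtain a starting inequality of the shape (8.5) with $a_0=0$, $b_0=B+h$, $\psi_0=0$, $c_0=1$, and $\gam_0=-sB+sh/3$. In particular $\gam_0\ge -sb_0$, so the hypotheses of Lemma 8.2 are met. We then apply Lemma 8.2 repeatedly: at the $n$-th step it produces a tuple $\bfm_n\in[0,r-1]^R$ and new parameters obeying
$$\psi_{n+1}=\rho_{\bfm_n}(\psi_n+(1-r/s)b_n),\quad c_{n+1}=\rho_{\bfm_n}(c_n+1),$$
$$\gam_{n+1}=\rho_{\bfm_n}\gam_n,\quad b_{n+1}=k_{\bfm_n}b_n+h_n,$$
for some nonnegative $h_n\le 16k^{2R}b_n$. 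The hypotheses of Lemma 8.2 persist across $N$ iterations by the calibration of (2.8): both $b_n$ and $c_n$ grow by at most a factor $k^R$ per step, and $\tet,\del$ are chosen vastly smaller than $k^{-RN}$; the condition $\gam_n\ge -sb_n$ is preserved since $\rho_{\bfm}<k_{\bfm}$ by (8.4) together with (2.7).

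After $N$ iterations, Lemma 9.1 yields $\llbracket K_{a_N,b_N}^{r,r}(X)\rrbracket\ll X^{\Lam+\del}$, which combined with the iterated form of (8.6) gives
$$M^{\Lam\psi_N+\gam_N}\ll X^{(c_N+1)\del}.$$
Unrolling the recursion, the leading ($j=N-1$) term of $\psi_N=(1-r/s)\sum_{j=0}^{N-1}\bigl(\prod_{i>j}\rho_{\bfm_i}\bigr)b_j$ yields $\psi_N\gg b_{N-1}\gg b_0\prod_{i=0}^{N-2}k_{\bfm_i}$, whereas $|\gam_N|\le sB\prod_i\rho_{\bfm_i}=sB(s/s_0)^{RN}\prod_i k_{\bfm_i}$ and $c_N\ll N\prod_i\rho_{\bfm_i}$. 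Using the crucial margin $s<s_0$ secured by (2.7), both ratios $|\gam_N|/\psi_N$ and $c_N\del/(\tet\psi_N)$ decay geometrically as $N\to\infty$; in particular $\Lam\psi_N+\gam_N\ge\tfrac12\Lam\psi_N$ for $N$ large. Taking logarithms in the displayed inequality and using $M=X^\tet$ then forces
$$\Lam\le\frac{2(c_N+1)\del+O(1)}{\tet\psi_N},$$
whose right-hand side tends to $0$ as $N\to\infty$ (essentially at rate $(s/s_0)^{RN}$), contradicting $\Lam>0$.

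The main obstacle is Step 2: the careful bookkeeping needed to verify that the admissibility hypotheses $b_n\le(20Rk^{2R}\tet)^{-1}$, $c_n\le(2\del)^{-1}\tet$ and $\gam_n\ge-sb_n$ persist under $N$ applications of Lemma 8.2, and the estimation showing that $\psi_N$ dominates both $|\gam_N|$ and $c_N\del/\tet$. The essential ingredient behind the domination is the identity $\rho_{\bfm}=k_{\bfm}(s/s_0)^R$ from (8.4) combined with $s<s_0$, which provides a geometric gain of $(s/s_0)^R<1$ at each iteration; without the strict margin $s<s_1=\tet_+$, this gain would disappear and the iteration would fail to close.
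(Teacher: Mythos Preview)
Your proposal is correct and follows essentially the same approach as the paper: initialize via Lemma~5.1, iterate Lemma~8.2 $N$ times while maintaining the admissibility constraints, close with Lemma~9.1, and extract a bound $\Lam\le o_N(1)$ using the geometric margin $s<s_0$ furnished by (2.7) and (8.4).

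Two small points. First, your unrolled formula for $\psi_N$ has an off-by-one: it should read $\psi_N=(1-r/s)\sum_{j=0}^{N-1}\bigl(\prod_{i\ge j}\rho_{\bfm_i}\bigr)b_j$, not $\prod_{i>j}$. Second, your endgame differs slightly from the paper's. You argue that $|\gam_N|/\psi_N\le\frac{s(s/s_0)^{R(N-1)}}{1-r/s}\to 0$, then absorb $\gam_N$ into $\tfrac12\Lam\psi_N$ (this requires choosing $N$ large in terms of $\Lam$, which is legitimate since $\Lam$ depends only on $s,k$). The paper instead bounds each of the $N$ summands in $\psi_N$ from below by the same quantity $(s/s_0)^{RN}\kap_0\cdots\kap_{N-1}B$, picking up a factor of $N$; combined with $|\gam_N|\le s(s/s_0)^{RN}\kap_0\cdots\kap_{N-1}B$ and $X^{(c_N+1)\del}<M$, this yields the clean $X$-independent bound $\Lam\le 2s/(N(1-r/s))$ directly, without needing $N$ to depend on $\Lam$. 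Both routes are valid; the paper's avoids the mild circularity and the fuss over the $O(1)$ implicit constant, while yours makes the role of the geometric gain $(s/s_0)^R$ more transparent.
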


\begin{proof} We prove that when $s$ is the largest integer smaller than $s_1$, then one has 
$\Lam\le 0$, for in such circumstances the conclusion of the lemma follows at once from 
(\ref{2.24}). By reference to (\ref{2.5}) and (\ref{2.6}), we find that $s<s_1=\tet_+$. Thus, 
since $R$ has been chosen sufficiently large in terms of $s$, $k$ and $\tet_+$, the hypothesis 
$s<s_1$ ensures that $s<s_0$, where $s_0$ is defined by means of (\ref{8.1}). Assume then that 
$\Lam\ge 0$, for otherwise there is 
nothing to prove. We begin by noting that as a consequence of Lemma \ref{lemma5.1}, one finds from 
(\ref{2.21}) and (\ref{2.23}) that there exists an integer $h_{-1}$ with $0\le h_{-1}\le 4B$ such that
$$\llbracket J_{s+r}(X)\rrbracket \ll M^{Bs-sh_{-1}/3}\llbracket K_{0,B+h_{-1}}^{r,r}(X)\rrbracket .$$
We therefore deduce from (\ref{2.24}) that
\begin{equation}\label{9.1}
X^\Lam \ll X^\del \llbracket J_{s+r}(X)\rrbracket \ll X^\del M^{Bs-sh_{-1}/3}
\llbracket K_{0,B+h_{-1}}^{r,r}(X)\rrbracket .
\end{equation}

\par Next we define sequences $(\kap_n)$, $(h_n)$, $(a_n)$, $(b_n)$, $(c_n)$, $(\psi_n)$ and 
$(\gam_n)$, for $0\le n\le N$, in such a way that
\begin{equation}\label{9.2}
2^R\le \kap_{n-1}\le k^R,\quad 0\le h_{n-1}\le 16k^{2R}b_{n-1},
\end{equation}
and
\begin{equation}\label{9.3}
X^\Lam M^{\Lam \psi_n}\ll X^{c_n\del}M^{-\gam_n}\llbracket K_{a_n,b_n}^{r,r}(X)\rrbracket .
\end{equation}
We note here that the sequences $(a_n)$ and $(b_n)$ are not directly related to our earlier use of these 
letters. Given a fixed choice for the sequences $(a_n)$, $(\kap_n)$ and $(h_n)$, the remaining sequences 
are defined by means of the relations
\begin{align}
b_{n+1}&=\kap_nb_n+h_n,\label{9.4}\\
c_{n+1}&=(s/s_0)^R\kap_n(c_n+1),\label{9.5}\\
\psi_{n+1}&=(s/s_0)^R\kap_n(\psi_n+(1-r/s)b_n),\label{9.6}\\
\gam_{n+1}&=(s/s_0)^R\kap_n\gam_n.\label{9.7}
\end{align}
We put
$$\kap_{-1}=k^R,\quad b_{-1}=1,\quad a_0=0,\quad b_0=B+h_{-1},$$
$$\psi_0=0,\quad c_0=1,\quad \gam_0=\tfrac{1}{3}sh_{-1}-Bs,$$
so that both (\ref{9.2}) and (\ref{9.3}) hold with $n=0$ as a consequence of our initial choice of 
$\kap_{-1}$ and $b_{-1}$, together with (\ref{9.1}). We prove by induction that for each non-negative 
integer $n$ with $n<N$, the sequences $(a_m)_{m=0}^n$, $(\kap_m)_{m=0}^n$ and 
$(h_m)_{m=-1}^n$ may be chosen in such a way that
\begin{align}
1\le b_n\le (20Rk^{2R}\tet)^{-1},\quad \psi_n\ge 0,&\quad \gam_n\ge -sb_n,\quad 
0\le c_n\le (2\del)^{-1}\tet,\label{9.8}\\
0\le a_n\le b_n/\sqrt{k},&\quad (k-r+1)b_n\ge ra_n,\label{9.9}
\end{align}
and so that (\ref{9.2}) and (\ref{9.3}) both hold with $n$ replaced by $n+1$.\par

Let $0\le n<N$, and suppose that (\ref{9.2}) and (\ref{9.3}) both hold for the index $n$. We have already 
shown such to be the case for $n=0$. From (\ref{9.2}) and (\ref{9.4}) we find that 
$b_n\le 4(17k^{2R})^nB$, whence, by invoking (\ref{2.8}), we find that for $0\le n\le N$ one has
$$b_n\le (20Rk^{2R}\tet)^{-1}.$$
It is apparent from (\ref{9.5}) and (\ref{9.6}) that $c_n$ and $\psi_n$ are non-negative for all $n$. 
Observe also that since $s\le s_0$ and $\kap_m\le k^R$, then by iterating (\ref{9.5}) we obtain the 
bound
\begin{equation}\label{9.10}
c_n\le k^{Rn}+k^{R}\Bigl( \frac{k^{Rn}-1}{k^R-1}\Bigr) \le 3k^{Rn}\quad (n\ge 0),
\end{equation}
and by reference to (\ref{2.8}) we see that $c_n\le (2\del)^{-1}\tet$ for $0\le n<N$.\par

In order to bound $\gam_n$, we recall that $s\le s_0$ and iterate the relation (\ref{9.7}) to deduce that
\begin{equation}\label{9.11}
\gam_m=(s/s_0)^{Rm}\kap_0\ldots \kap_{m-1}\gam_0\ge -(s/s_0)^{Rm}\kap_0\ldots \kap_{m-1}Bs.
\end{equation}
In addition, we find from (\ref{9.4}) that for $m\ge 0$ one has $b_{m+1}\ge \kap_mb_m$, so that an 
inductive argument yields the lower bound
$$b_m\ge \kap_0\ldots \kap_{m-1}b_0\ge \kap_0\ldots \kap_{m-1}B.$$
Hence we deduce from (\ref{9.11}) that
$$\gam_m\ge -(s/s_0)^{Rm}sb_m>-sb_m.$$
Assembling this conclusion together with those of the previous paragraph, we have shown that 
(\ref{9.8}) holds for $0\le n\le N$.\par

At this point in the argument, we may suppose that (\ref{9.3}), (\ref{9.8}) and (\ref{9.9}) hold for 
the index $n$. An application of Lemma \ref{lemma8.2} therefore reveals that there exist real 
numbers $\kap_n$, $h_n$ and $a_n$ satisfying the constraints implied by (\ref{9.2}) with $n$ 
replaced by $n+1$, for which the upper bound (\ref{9.3}) holds for some $a_n$ with 
$0\le a_n\le b_n/\sqrt{k}$, also with $n$ replaced by $n+1$. Our hypothesis on $r$, moreover, 
ensures that $(k-r+1)b_{n+1}\ge ra_{n+1}$. Hence (\ref{9.9}) holds also with $n$ replaced by 
$n+1$. This completes the inductive step, so that in particular the upper bound (\ref{9.3}) holds 
for $0\le n\le N$.\par

We now exploit the bound just established. Since we have the upper bound 
$b_N\le 4(17k^{2R})^NB\le (2\tet)^{-1}$, it is a consequence of Lemma \ref{lemma9.1} that
$$\llbracket K_{a_N,b_N}^{r,r}(X)\rrbracket\ll X^{\Lam+\del}.$$
By combining this with (\ref{9.3}) and (\ref{9.11}), we obtain the bound
\begin{equation}\label{9.12}
X^\Lam M^{\Lam\psi_N}\ll X^{\Lam+(c_N+1)\del}
M^{\kap_0\ldots \kap_{N-1}Bs(s/s_0)^{RN}}.
\end{equation}
Meanwhile, an application of (\ref{9.10}) in combination with (\ref{2.8}) shows that 
$X^{(c_N+1)\del }<M$. We therefore deduce from (\ref{9.12}) that
$$\Lam \psi_N\le (s/s_0)^{RN}\kap_0\ldots \kap_{N-1}Bs+1.$$
Notice here that $\kap_n\ge 2^R$ and
$$s/s_0\ge (s_1-1)/s_0\ge 1-2/s_0>\tfrac{1}{2}.$$
Hence
$$1<(s/s_0)^{RN}\kap_0\ldots \kap_{N-1}Bs,$$
so that
\begin{equation}\label{9.13}
\Lam \psi_N\le 2(s/s_0)^{RN}\kap_0\ldots \kap_{N-1}Bs.
\end{equation}
A further application of the lower bound $b_n\ge \kap_0\ldots \kap_{n-1}B$ leads from (\ref{9.6}) and 
the bound $s\le s_0$ to the relation
\begin{align*}
\psi_{n+1}&=(s/s_0)^R\left( \kap_n\psi_n+\kap_n(1-r/s)b_n\right) \\
&\ge (s/s_0)^R\kap_n\psi_n+(s/s_0)^R\kap_n(1-r/s)\kap_0\ldots \kap_{n-1}B\\
&\ge (s/s_0)^R\kap_n\psi_n+(s/s_0)^{R(n+1)}(1-r/s)\kap_0\ldots \kap_nB.
\end{align*}
An inductive argument therefore delivers the lower bound
$$\psi_N\ge N(1-r/s)(s/s_0)^{RN}\kap_0\ldots \kap_{N-1}B.$$
Thus we deduce from (\ref{9.13}) that
$$\Lam\le \frac{2(s/s_0)^{RN}\kap_0\ldots \kap_{N-1}Bs}{N(1-r/s)(s/s_0)^{RN}
\kap_0\ldots \kap_{N-1}B}=\frac{2s(1-r/s)^{-1}}{N}.$$
Since we are at liberty to take $N$ as large as we please in terms of $s$ and $k$, we are forced to 
conclude that $\Lam\le 0$. In view of our opening discussion, this completes the proof of the theorem.
\end{proof}

\section{Strongly diagonal behaviour}
We are now equipped to establish Theorem \ref{theorem1.1} and its corollary. Let $k$ be an integer with 
$k\ge 7$, put $r=k-\lceil 2\sqrt{k}\rceil +2$, and suppose that $s$ is a natural number with 
$s\ge \tfrac{1}{4}(k+1)^2$. We note that \cite[Theorem 1.1]{FW2013} demonstrates that whenever 
$1\le s\le \tfrac{1}{4}(k+1)^2$, then one has $J_{s,k}(X)\ll X^{s+\eps}$, and so we are certainly at 
liberty to assume that $s>\tfrac{1}{4}(k+1)^2$. Write
\begin{equation}\label{10.1}
\Del=\sum_{m=1}^{r-1}\frac{m(r-m)}{s-m-r},
\end{equation}
and define $s_1$ as in the statement of Theorem \ref{theorem9.2}. We aim to show that the real number
$$t_0=kr-\tfrac{1}{2}r(r+1)-\Del$$
satisfies the bounds
$$\max\{ 2r-1,\lceil \tfrac{1}{2}r(r-1)+\Del\rceil\}\le t_0\le s_1.$$
If these bounds be confirmed, then it follows from Theorem \ref{theorem9.2} that whenever
$$1\le s\le t_0+r=kr-\tfrac{1}{2}r(r-1)-\Del,$$
then $J_{s,k}(X)\ll X^{s+\eps}$, and the conclusion of Theorem \ref{theorem1.1} follows.\par

We observe in our first step that, in view of the lower bound $s>\tfrac{1}{4}(k+1)^2$ and our 
hypothesis $k\ge 7$, one has
\begin{align*}
\Del&\le \sum_{m=1}^{k-5}\frac{m(k-4-m)}{s-k+4-m}\le 
\sum_{m=1}^{k-5}\frac{m(k-4-m)}{\tfrac{1}{4}(k^2-6k+37)}\\
&=\frac{2(k-3)(k-4)(k-5)}{3(k^2-6k+37)}<\tfrac{2}{3}(k-6).
\end{align*}
Next, one finds that $t_0\le s_1$ provided only that
$$(kr-\tfrac{1}{2}r(r-1)-\Del-2r)^2\le (kr-\tfrac{1}{2}r(r-1)-\Del)^2-4r(\tfrac{1}{2}r(r-1)+\Del),$$
an inequality that is satisfied provided that
$$4r(kr-\tfrac{1}{2}r(r-1)-\Del)-4r^2\ge 4r(\tfrac{1}{2}r(r-1)+\Del).$$
This last inequality amounts to the constraint $kr-r(r-1)-r\ge 2\Del$. We therefore conclude that 
$t_0\le s_1$ whenever $r(k-r)\ge \tfrac{4}{3}(k-6)$. But when $k\ge 7$, it is easily verified that
$$r(k-r)\ge (k-\lceil 2\sqrt{k}\rceil +2)(\lceil 2\sqrt{k}\rceil -2)\ge \tfrac{4}{3}(k-6)+1,$$
and hence we confirm that $t_0\le s_1$, as desired.\par

Meanwhile, the bound $\Del<\tfrac{2}{3}(k-6)$ ensures that
\begin{align*}
kr-\tfrac{1}{2}r(r+1)-\Del&\ge (k-r)r+\left( \tfrac{1}{2}r(r-1)+\Del\right) -2\Del \\
&> \tfrac{1}{2}r(r-1)+\Del +\left( r(k-r)-\tfrac{4}{3}(k-6)\right) \\
&\ge \tfrac{1}{2}r(r-1)+\Del +1.
\end{align*}
Since $k\ge 7$, moreover, one has
\begin{align*}
kr-\tfrac{1}{2}r(r+1)-\Del &\ge k(r-1)-\tfrac{1}{2}r(r+1)\ge k(r-1)-\tfrac{1}{2}r(k-3)\\
&=\tfrac{1}{2}(k+3)r-k>2r-1.
\end{align*}
Then we have $t_0\ge \max\{2r-1,\lceil \tfrac{1}{2}r(r-1)+\Del\rceil\}$, confirming the final claimed 
bound. In view of our earlier discussion, the proof of Theorem \ref{theorem1.1} is complete.\par

We turn now to the proof of Corollary \ref{corollary1.2}. We suppose again that $k\ge 7$, and we put
$$s=\tfrac{1}{2}k(k+1)-\lceil \tfrac{7}{3}k\rceil .$$
Again defining $\Del$ as in (\ref{10.1}), in which $r=k-\lceil 2\sqrt{k}\rceil +2$, we find that
\begin{align*}
\Del&\le \sum_{m=1}^{k-5}\frac{m(k-4-m)}{\tfrac{1}{2}k(k+1)-(k-4)-(\tfrac{7}{3}k+\tfrac{2}{3})-m}\\
&\le \frac{(k-3)(k-4)(k-5)}{3k^2-23k+50}<\tfrac{1}{3}k.
\end{align*}
Hence we deduce that
\begin{align*}
kr-&\tfrac{1}{2}r(r-1)-\Del\\
&>k(k-\lceil 2\sqrt{k}\rceil +2)-\tfrac{1}{2}(k-\lceil 2\sqrt{k}\rceil +2)
(k-\lceil 2\sqrt{k}\rceil +1)-\tfrac{1}{3}k\\
&=\tfrac{1}{2}k(k+1)-\tfrac{1}{2}(\lceil 2\sqrt{k}\rceil -1)(\lceil 2\sqrt{k}\rceil -2)-\tfrac{1}{3}k\\
&\ge \tfrac{1}{2}k(k+1)-\sqrt{k}(2\sqrt{k}-1)-\tfrac{1}{3}k>s.
\end{align*}
We therefore deduce from Theorem \ref{theorem1.1} that since
$$s<kr-\tfrac{1}{2}r(r-1)-\sum_{m=1}^{r-1}\frac{m(r-m)}{s-r-m},$$
then $J_{s,k}(X)\ll X^{s+\eps}$. This completes the proof of the corollary.

\section{A refinement of the core iteration}
Our goal in this section is to outline an extension of our basic method that permits, for large exponents 
$k$, the refinement of Corollary \ref{corollary1.2} to deliver Theorem \ref{theorem1.3}. The details of 
this extension are complicated enough that we aim for a somewhat abbreviated account, exploiting the 
discussion of \S\S2--10 so as to provide an outline.\par

In \S2 we fixed $r$ to be a parameter satisfying $1\le r\le k-\lceil 2\sqrt{k}\rceil +2$ in order to ensure 
that, as the iteration bounding $K_{a,b}^{r,r}(X)$ in terms of related mean values $K_{a',b'}^{r,r}(X)$ 
proceeds, the exponents $a'$ and $b'$ satisfy the condition $a'\le b'/\sqrt{k}$. The point of the latter 
constraint, in fact, is to ensure that the iteration tree of mean values does not encounter a situation in 
which $b'<a'$, which would be fatal for the method. By relaxing this condition, but pruning the iteration 
tree, we permit the possibility that $a'$ may be much closer to $b'$ in size, with the exponent mean $s_0$ 
increased so that it becomes closer to $\tfrac{1}{2}k(k+1)$.\par

Let $l=\lceil k^{1/3}\rceil$ and put $r=k-l$. We take $s$ to be a natural number satisfying
$$\tfrac{1}{2}k(k+1)-3k\le s+r\le \tfrac{1}{2}k(k+1)-\tfrac{1}{3}k-8k^{2/3}.$$
We take $R$ and $N$ to be natural numbers sufficiently large in terms of $s$ and $k$ in the same manner 
as in \S2, and define $B$, $\tet$ and $\del$ as in (\ref{2.8}). Other aspects of our initial set-up follow 
those of \S2, mutatis mutandis. Lemma \ref{lemma6.1} remains valid provided that $(k-m)b\ge (m+1)a$. 
If instead one has $(k-m)b<(m+1)a$, then one may make use of an alternate inequality based on the 
application of H\"older's inequality. 

\begin{lemma}\label{lemma11.1} Let $m$ be an integer with $0\le m\le r-1$. Suppose that $a$ and $b$ 
are integers with $0\le a<b\le \tet^{-1}$. Then one has
$$K_{a,b}^{m+1,r}(X)\ll \left( J_{s+r}(X/M^a)\right)^{\ome_1}
\left( J_{s+r}(X/M^b)\right)^{\ome_2}
\left( K_{a,b}^{m,r}(X)\right)^{\ome_3},$$
where
$$\ome_1=\frac{s}{(s+r)(s-m)},\quad \ome_2=\frac{r}{(s+r)(s-m)}\quad \text{and}\quad 
\ome_3=\frac{s-m-1}{s-m}.$$
\end{lemma}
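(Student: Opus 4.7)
The plan is to prove Lemma \ref{lemma11.1} by combining a Diophantine relaxation step with one application of H\"older's inequality (with two factors), and closing the argument via Lemma \ref{lemma2.2}. The starting observation is that $K_{a,b}^{m+1,r}(X;\xi,\eta)$ counts integral solutions of the system underlying (\ref{2.14}) in which both $\bfx$ and $\bfy$ lie in $\Xi_a^{m+1}(\xi)\mmod{p^{a+1}}$. Upon relaxing the distinctness constraint on the $(m+1)$-th coordinates of $\bfx$ and of $\bfy$---so that $x_{m+1}$ and $y_{m+1}$ are required merely to satisfy $x_{m+1}\equiv y_{m+1}\equiv \xi\mmod{p^a}$---the relaxed system has at least as many solutions, and since $\grF_a^1=\grf_a$, its exponential sum representation is $|\grF_a^m\grf_a|^2$ in place of $|\grF_a^{m+1}|^2$. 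Consequently
$$K_{a,b}^{m+1,r}(X;\xi,\eta)\le \oint |\grF_a^m(\bfalp;\xi)|^2|\grf_a(\bfalp;\xi)|^2|\grF_b^r(\bfalp;\eta)|^2|\grf_b(\bfalp;\eta)|^{2s-2m-2}\d\bfalp.$$

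Next I would apply H\"older's inequality with conjugate exponents $p=(s-m)/(s-m-1)$ and $q=s-m$, splitting the integrand above as a product $A\cdot B$ with
$$A=\bigl(|\grF_a^m|^2|\grF_b^r|^2|\grf_b|^{2s-2m}\bigr)^{1/p},\qquad B=|\grF_a^m|^{2/(s-m)}|\grf_a|^2|\grF_b^r|^{2/(s-m)}.$$
One verifies that $A^p$ is precisely the integrand of $K_{a,b}^{m,r}(X;\xi,\eta)$, so the first H\"older factor produces $(K_{a,b}^{m,r}(X))^{\omega_3}$. The remaining factor is $(\oint B^q\d\bfalp)^{1/q}$, in which $B^q=|\grF_a^m|^2|\grf_a|^{2(s-m)}|\grF_b^r|^2$.

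To handle $\oint B^q\d\bfalp$, I would invoke the Diophantine relaxation trick once more, this time releasing the distinctness constraints on both $\grF_a^m$ and $\grF_b^r$, to obtain
$$\oint |\grF_a^m|^2|\grf_a|^{2(s-m)}|\grF_b^r|^2\d\bfalp \le \oint |\grf_a(\bfalp;\xi)|^{2s}|\grf_b(\bfalp;\eta)|^{2r}\d\bfalp,$$
whereupon Lemma \ref{lemma2.2} with $u=s$ and $v=r$ delivers the upper bound $\ll J_{s+r}(X/M^a)^{s/(s+r)}J_{s+r}(X/M^b)^{r/(s+r)}$. Raising to the $1/(s-m)$ power produces exactly $J_{s+r}(X/M^a)^{\omega_1}J_{s+r}(X/M^b)^{\omega_2}$, and maximising over admissible $\xi,\eta$ delivers the claim of the lemma, with the case $a=0$ handled by cosmetic modifications along the lines of the final paragraph of the proof of Lemma \ref{lemma6.1}.

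The main obstacle is verifying that the relaxation step is legitimate, since the implicit bound $|\grF_a^{m+1}|^2\le |\grF_a^m|^2|\grf_a|^2$ is \emph{not} a pointwise inequality on the integrands; rather, it holds only after integration, by interpreting both sides as counts of solutions to the associated Diophantine systems and observing that loosening a distinctness condition can only enlarge the solution set. A modest book-keeping check is also needed to confirm that the exponents emerging from H\"older match $\omega_1,\omega_2,\omega_3$ as stated in the lemma.
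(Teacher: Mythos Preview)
Your argument is correct. The initial Diophantine relaxation producing the integrand $|\grF_a^m|^2|\grf_a|^2|\grF_b^r|^2|\grf_b|^{2s-2m-2}$ matches the paper exactly, and your two-factor H\"older split with exponents $p=(s-m)/(s-m-1)$ and $q=s-m$ is clean: one checks directly that $A\cdot B$ recovers the integrand, that $A^p$ is the integrand of $K_{a,b}^{m,r}(X;\xi,\eta)$, and that $B^q=|\grF_a^m|^2|\grf_a|^{2(s-m)}|\grF_b^r|^2$. Your subsequent relaxation of $\oint B^q$ to $\oint|\grf_a|^{2s}|\grf_b|^{2r}$ is legitimate for exactly the reason you identify---orthogonality interprets both as solution counts, and dropping distinctness constraints enlarges the set---and then Lemma~\ref{lemma2.2} with $(u,v)=(s,r)$ closes the argument with precisely the exponents $\ome_1,\ome_2$.

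The paper takes a somewhat different route after the common first step: it applies a five-factor H\"older inequality, producing integrals $I_1,\ldots,I_5$ with exponents $\psi_1,\ldots,\psi_5$, where $I_2=\oint|\grf_a|^{2s+2r}$ and $I_3=\oint|\grf_b|^{2s+2r}$ are pure moments handled by Lemma~\ref{lemma2.1}, while $I_4$ is $K_{a,b}^{m,r}$ and the mixed integrals $I_1,I_5$ are dominated by $I_2,I_4$ via Diophantine relaxation. The five exponents then recombine as $\psi_1+\psi_2=\ome_1$, $\psi_3=\ome_2$, $\psi_4+\psi_5=\ome_3$. Your route is more economical: by keeping the mixed moment $\oint|\grf_a|^{2s}|\grf_b|^{2r}$ intact and invoking Lemma~\ref{lemma2.2} rather than Lemma~\ref{lemma2.1}, you avoid the auxiliary integrals $I_1$ and $I_5$ and the associated relaxation steps. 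The paper's decomposition has the minor advantage of reducing everything to pure single-scale moments, but in this instance that buys nothing extra, and your argument is both shorter and equally rigorous.
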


\begin{proof} On considering the underlying Diophantine systems, it follows from (\ref{2.12}) and 
(\ref{2.14}) that for some integers $\xi$ and $\eta$, one has
$$K_{a,b}^{m+1,r}(X)\le \oint |\grF_a^m(\bfalp;\xi)^2\grf_a(\bfalp;\xi)^2\grF_b^r(\bfalp;\eta)^2
\grf_b(\bfalp;\eta)^{2s-2m-2}|\d\bfalp .$$
Then by H\"older's inequality, we obtain
$$K_{a,b}^{m+1,r}(X)\le I_1^{\psi_1}I_2^{\psi_2}I_3^{\psi_3}I_4^{\psi_4}I_5^{\psi_5},$$
where
$$I_1=\oint |\grF_a^m(\bfalp;\xi)^4\grf_a(\bfalp;\xi)^{2s+2r-4m}|\d\bfalp ,$$
$$I_2=\oint |\grf_a(\bfalp;\xi)|^{2s+2r}\d\bfalp ,\quad I_3=\oint |\grf_b(\bfalp;\eta)|^{2s+2r}
\d\bfalp ,$$
$$I_4=\oint |\grF_a^m(\bfalp;\xi)^2\grF_b^r(\bfalp;\eta)^2\grf_b(\bfalp;\eta)^{2s-2m}|\d\bfalp ,$$
$$I_5=\oint |\grF_a^m(\bfalp;\xi)^2\grF_b^r(\bfalp;\eta)^4\grf_b(\bfalp;\eta)^{2s-2r-2m}|\d\bfalp ,$$
$$\psi_1=\frac{1}{2s-2m},\quad \psi_2=\frac{s-r}{(s+r)(2s-2m)},\quad \psi_3=\ome_2,$$
$$\psi_4=\frac{s-m-2}{s-m}\quad \text{and}\quad \psi_5=\frac{1}{s-m}.$$
A further consideration of the underlying Diophantine systems reveals that $I_1\le I_2$ and $I_5\le I_4$, 
and thus it follows from Lemma \ref{lemma2.1} and (\ref{2.12}) that
$$K_{a,b}^{m+1,r}(X)\ll (J_{s+r}(X/M^a))^{\psi_1+\psi_2}(J_{s+r}(X/M^b))^{\ome_2}
(K_{a,b}^{m,r}(X))^{\psi_4+\psi_5}.$$
The conclusion of the lemma follows at once.
\end{proof}

Our substitute for Lemma \ref{lemma7.2} must offer a surrogate for the upper bound supplied by Lemma 
\ref{lemma6.1} in those circumstances wherein $m$ and $a$ are both large. In this context, we introduce 
the parameter $u=u(a,b)$ defined by
\begin{equation}\label{11.1}
u(a,b)=rb/(a+b).
\end{equation}
Notice that when $0\le m\le u-1$, one then has
$$(k-m)b-ma>(k-r)b=lb.$$
We recall also the definitions of $\phi_m$ and $\phi^*$ from (\ref{7.1}) and (\ref{7.2}), and also write
$$\varphi_0=\frac{s(r-u)}{(s+r)(s-u)}\quad \text{and}\quad \varphi_1=\frac{r(r-u)}{(s+r)(s-u)}.$$

\begin{lemma}\label{lemma11.2} Suppose that $a$ and $b$ are integers with $0\le a<b\le \tet^{-1}$, 
and define $u(a,b)$ by means of (\ref{11.1}). Then whenever $a'$ is an integer with $a'\ge a$ for which 
$(k-r+1)b\ge ra'$, and $u$ is an integer with $0\le u\le u(a,b)$, one has
$$\llbracket K_{a,b}^{r,r}(X)\rrbracket \ll 
\left( (X/M^a)^{\varphi_0}(X/M^b)^{\phi^*+\varphi_1}\right)^{\Lam+\del}
\prod_{m=0}^{u-1}\llbracket I_{b,b_m}^{r,r}(X)\rrbracket^{\phi_m},$$
where we write $b_m=(k-m)b-ma'$.
\end{lemma}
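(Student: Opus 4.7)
The plan is to establish this via a two-phase iteration that combines Lemma~\ref{lemma11.1} with Lemma~\ref{lemma6.1}, mirroring the structure of Lemma~\ref{lemma7.2} but using Lemma~\ref{lemma6.1} only for the lower $u$ rungs $m=0,1,\ldots,u-1$ and dispatching the upper $r-u$ rungs $m=u,\ldots,r-1$ via the cruder bound of Lemma~\ref{lemma11.1}. The hypothesis $u\le u(a,b)=rb/(a+b)$ is precisely what ensures Lemma~\ref{lemma6.1} remains applicable on the lower rungs: from $u(a+b)\le rb$ one obtains $(m+1)a\le ua\le (r-u)b\le (k-m)b$ for $m\le u-1$, and the constraint $(k-r+1)b\ge ra'$ together with $a'\ge a$ yields $a\le b_m\le (k-m)b-ma$, which is the range permitted for $b'$ in the conclusion of Lemma~\ref{lemma6.1}.

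First I would iterate Lemma~\ref{lemma11.1} downward from $m=r-1$ to $m=u$. Repeated application introduces a product of $J$-terms and multiplies the $K$-exponent by $\prod_{m=u}^{r-1}(s-m-1)/(s-m)=(s-r)/(s-u)$; telescoping the $J$-exponents using $\tfrac{1}{(s-m)(s-m-1)}=\tfrac{1}{s-m-1}-\tfrac{1}{s-m}$ collapses them to the closed forms $\varphi_0$ and $\varphi_1$ stated in the lemma, yielding
\begin{equation*}
K_{a,b}^{r,r}(X)\ll (J_{s+r}(X/M^a))^{\varphi_0}(J_{s+r}(X/M^b))^{\varphi_1}(K_{a,b}^{u,r}(X))^{(s-r)/(s-u)}.
\end{equation*}

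Next I would apply Lemma~\ref{lemma6.1} for $m=u-1,u-2,\ldots,0$, exactly as in the inductive argument of Lemma~\ref{lemma7.1} but halting after only $u$ steps, taking $b'=b_m$ at each step. This produces
\begin{equation*}
K_{a,b}^{u,r}(X)\ll (K_{a,b}^{0,r}(X))^{(s-u)/s}\prod_{m=0}^{u-1}\bigl((M^{b_m-a})^sI_{b,b_m}^{r,r}(X)\bigr)^{(s-u)/((s-m)(s-m-1))},
\end{equation*}
after which Lemma~\ref{lemma2.1} delivers $K_{a,b}^{0,r}(X)\ll J_{s+r}(X/M^b)$. Raising this to the power $(s-r)/(s-u)$ and substituting into the previous display converts every $I$-exponent into $\phi_m$ and merges the $J_{s+r}(X/M^b)$ contributions into total exponent $\phi^*+\varphi_1$.

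The final step is to invoke (\ref{2.24}) to replace each $J_{s+r}(Y)$ by $Y^{s+r+\Lam+\del}$ and pass to the normalised form via (\ref{2.22})--(\ref{2.23}). The hard part is the bookkeeping: the $s+r$ portions of the $J$-exponents must combine with the $M^{(b_m-a)s}$ prefactors and the normalising denominators of $\llbracket I_{b,b_m}^{r,r}\rrbracket$ and $\llbracket K_{a,b}^{r,r}\rrbracket$ to cancel exactly, which comes down to the two identities $(s+r)\varphi_0+s\sum_{m=0}^{u-1}\phi_m=r$ and $(s+r)(\varphi_1+\phi^*)+r\sum_{m=0}^{u-1}\phi_m=s$. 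Using $\sum_{m=0}^{u-1}\phi_m=(s-r)u/(s(s-u))$ these reduce to short algebraic manipulations, after which only the factor $((X/M^a)^{\varphi_0}(X/M^b)^{\phi^*+\varphi_1})^{\Lam+\del}$ asserted in the statement survives.
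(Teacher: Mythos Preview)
Your proposal is correct and follows essentially the same route as the paper: the paper's proof is a one-paragraph sketch that says to rerun the argument of Lemma~\ref{lemma7.1} using Lemma~\ref{lemma11.1} in place of Lemma~\ref{lemma6.1} for the rungs $u\le m\le r-1$, then normalise as in Lemma~\ref{lemma7.2}. You have simply written out the telescoping and the cancellation identities explicitly, and your check that the hypothesis $u\le rb/(a+b)$ forces $(k-m)b\ge (m+1)a$ and $a\le b_m\le (k-m)b-ma$ on the lower rungs is exactly the verification implicit in the paper's appeal to Lemma~\ref{lemma6.1}.
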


\begin{proof} By following the argument of the proof of Lemma \ref{lemma7.1}, though substituting 
Lemma \ref{lemma11.1} in place of Lemma \ref{lemma6.1} when $u\le m\le r-1$, we deduce that
\begin{align*}
K_{a,b}^{r,r}(X)\ll &\, \left( J_{s+r}(X/M^b)\right)^{\phi^*}\left( J_{s+r}(X/M^a)\right)^{\varphi_0}
\left( J_{s+r}(X/M^b)\right)^{\varphi_1}\\
&\, \times \prod_{m=0}^{u-1}\left( (M^{b_m-a})^sI_{b,b_m}^{r,r}(X)\right)^{\phi_m}.
\end{align*}
It may be useful here to note that
$$\sum_{m=u}^{r-1}\phi_m=(s-r)\sum_{m=u}^{r-1}(s_{m+1}^{-1}-s_m^{-1})=(s-r)
\Bigl( \frac{1}{s-r}-\frac{1}{s-u}\Bigr)=\frac{r-u}{s-u}.$$
Thus, on recalling the definitions (\ref{2.21}) to (\ref{2.23}), we obtain the conclusion of the lemma, just 
as in the argument delivering Lemma \ref{lemma7.2}.
\end{proof}

We next combine iterated applications of Lemma \ref{lemma11.2} so as to engineer a block multigrade 
process analogous to that delivered by Lemma \ref{lemma7.3}. Before announcing the lemma that 
summarises this process, we pause to modify the notation of \S7. We consider $R$-tuples $\bfh$ just as in 
the preamble to Lemma \ref{lemma7.3}. The sequences $(a_n)=(a_n(\bfm;\bfh))$ and 
$(b_n)=(b_n(\bfm;\bfh))$ are now defined by putting
$$a_0=\lfloor b/l\rfloor\quad \text{and}\quad b_0=b,$$
and then applying the iterative relations (\ref{7.7}). We then define $u_n(\bfm;\bfh)$ by putting
\begin{equation}\label{11.2}
u_n(\bfm;\bfh)=rb_{n-1}/(a_{n-1}+b_{n-1}).
\end{equation}
We emphasise here that, since $a_{n-1}$ and $b_{n-1}$ depend at most on the first $n-1$ coordinates of 
$\bfm$ and $\bfh$, then the same holds for $u_n(\bfm;\bfh)$. We use the notation
$$\prod_{{\bf0}\le \bfm\le \bfu-1}$$
as shorthand for the ordered product
$$\prod_{0\le m_1\le u_1-1}\prod_{0\le m_2\le u_2-1}\ldots \prod_{0\le m_R\le u_R-1}.$$
Finally, we redefine the quantity 
$\Tet_n(\bfm;\bfh)$ for $0\le n\le R$ by putting
\begin{equation}\label{11.3}
\Tet_n(\bfm;\bfh)=(X/M^{b/2})^{-\Lam-\del}\llbracket K_{a_n,b_n}^{r,r}(X)\rrbracket +M^{-3sk^Rb}.
\end{equation}

\begin{lemma}\label{lemma11.3} Suppose that $a$ and $b$ are integers with 
$$0\le a<b\le (16Rk^{2R}\tet)^{-1},$$
and suppose further that $a\le b/l$. Suppose also that $u_1,\ldots ,u_R$ are integers with 
$0\le u_i\le u_i(\bfm;\bfh)$ $(1\le i\le R)$. Then there exists a choice for $\bfh(\bfm)\in [0,r-1]^R$, 
satisfying the condition that $0\le h_n(\bfm)\le 15k^Rb$ $(1\le n\le R)$, and for which one has
$$(X/M^{b/2})^{-\Lam-\del}\llbracket K_{a,b}^{r,r}(X)\rrbracket \ll \prod_{{\bf0}\le \bfm\le \bfu-1}
\Tet_R(\bfm;\bfh)^{\phi_{m_1}\ldots \phi_{m_R}}.$$
\end{lemma}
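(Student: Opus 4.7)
My plan is to prove Lemma \ref{lemma11.3} by induction on $l \in \{1,\ldots,R\}$, establishing at each stage the bound
\[
(X/M^{b/2})^{-\Lambda-\delta}\llbracket K_{a,b}^{r,r}(X)\rrbracket \ll \prod \Theta_l(\mathbf{m};\mathbf{h})^{\phi_{m_1}\cdots\phi_{m_l}},
\]
with the product ranging over tuples $(m_1,\ldots,m_l)\in\prod_{n=1}^l[0,u_n-1]$; setting $l=R$ yields the conclusion. The scaffolding parallels the proof of Lemma \ref{lemma7.3}, substituting Lemma \ref{lemma11.2} for Lemma \ref{lemma7.2} at each application and invoking Lemma \ref{lemma4.2} to convert $I$-type mean values into $K$-type ones.

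For the base case $l=1$, I would apply Lemma \ref{lemma11.2} with parameters $(a,b)$ and $u=u_1$, obtaining an upper bound for $\llbracket K_{a,b}^{r,r}\rrbracket$ as a product of terms $\llbracket I_{b,b_m^*}^{r,r}\rrbracket^{\phi_m}$ multiplied by the explicit factor $\bigl((X/M^a)^{\varphi_0}(X/M^b)^{\phi^*+\varphi_1}\bigr)^{\Lambda+\delta}$. The essential step is the normalization inequality
\[
(X/M^a)^{\varphi_0}(X/M^b)^{\phi^*+\varphi_1}\ll (X/M^{b/2})^{\varphi_0+\phi^*+\varphi_1},
\]
which, since $\varphi_0+\phi^*+\varphi_1=1-\sum_{m=0}^{u-1}\phi_m$, reduces after taking logarithms to $\phi^*+\varphi_1\ge\varphi_0$. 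A direct calculation gives
\[
\phi^*+\varphi_1-\varphi_0 = \frac{(s-r)(s^2-ru)}{s(s+r)(s-u)} > 0,
\]
so the inequality holds. Once in place, the normalizer $(X/M^{b/2})^{\Lambda+\delta}$ distributes as $(X/M^{b/2})^{\phi_m(\Lambda+\delta)}$ onto each factor in the product. Applying Lemma \ref{lemma4.2} to each $I$-term then converts it into $\Theta_1(m_1;h_1)$, with the factor $M^{-h_1 s/3}$ discarded and the error $(M^H)^{-s/4}(X/M^b)^{\Lambda+\delta}$ absorbed into the $M^{-3sk^R b}$ floor built into $\Theta_1$.

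The inductive step from $l-1$ to $l$ follows the template of (\ref{7.12})--(\ref{7.14}): for each tuple $\mathbf{m}$ at level $l-1$, either $\Theta_{l-1}(\mathbf{m};\mathbf{h})\ll M^{-3sk^Rb}$ — in which case the desired bound is immediate, using $\sum_{m_l=0}^{u_l-1}\phi_{m_l}\le r/s<1$ together with the floor in $\Theta_l$ — or else $\Theta_{l-1}(\mathbf{m};\mathbf{h})$ is dominated by $(X/M^{b/2})^{-\Lambda-\delta}\llbracket K_{a_{l-1},b_{l-1}}^{r,r}\rrbracket$, in which case I would apply Lemma \ref{lemma11.2} with $a=a_{l-1}$, $b=b_{l-1}$, $u=u_l$, and a suitable $a'$, followed by Lemma \ref{lemma4.2} to produce the next-level mean value $K_{a_l,b_l}^{r,r}$ with $a_l=b_{l-1}$ and $b_l=(k-m_l)b_{l-1}-m_l a_{l-1}+h_l(\mathbf{m})$.

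The main obstacle is the bookkeeping demanded by the mixed $(X/M^a)^{\varphi_0}(X/M^b)^{\phi^*+\varphi_1}$ factor emerging from every application of Lemma \ref{lemma11.2} — precisely the reason that the lemma uses the $X/M^{b/2}$ normalization rather than $X/M^b$. The inequality $\phi^*+\varphi_1\ge\varphi_0$ must be verified uniformly as the parameters $a_{n-1}$, $b_{n-1}$, and $u_n$ vary with $\mathbf{m}$ and $\mathbf{h}$, and the cumulative explicit factors over the $R$ iterations must telescope correctly. A secondary, routine task is verifying the hypotheses required by Lemma \ref{lemma11.2} — in particular, the existence of a permissible $a'$ with $a'\ge a_{n-1}$ and $(k-r+1)b_{n-1}\ge ra'$, and $u_n\le u_n(\mathbf{m};\mathbf{h})$ — at each stage, which is guaranteed by the recursion (\ref{7.7}) combined with the hypothesis on the $u_i$ in the statement.
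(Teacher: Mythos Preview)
Your proposal is correct and follows essentially the same approach as the paper: an induction paralleling the proof of Lemma~\ref{lemma7.3}, with Lemma~\ref{lemma11.2} substituted for Lemma~\ref{lemma7.2} at each step, and with the key computation being the sign of the residual normalizing factor. Your inequality $\phi^*+\varphi_1-\varphi_0 = (s-r)(s^2-ru)/\bigl(s(s+r)(s-u)\bigr)>0$ is precisely equivalent to the paper's $\Ome<0$, since the paper's $\Ome=\tfrac12-\tfrac12\sum_{m=0}^{u-1}\phi_m-(\phi^*+\varphi_1)$ satisfies $2\Ome=-(\phi^*+\varphi_1-\varphi_0)$ once one uses $\varphi_0+\phi^*+\varphi_1=1-\sum_{m=0}^{u-1}\phi_m$; the paper obtains $2s(s-u)(s+r)\Ome=-(s-r)(s^2-ur)$, matching your formula up to sign. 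One minor remark: the quantity $\phi^*+\varphi_1-\varphi_0$ depends only on $s$, $r$ and $u$, not on $a_{n-1}$ or $b_{n-1}$, so the uniformity you flag is automatic; what varies with $n$ is the factor $(X/M^{a_{n-1}})^{\varphi_0}(X/M^{b_{n-1}})^{\phi^*+\varphi_1}$, and since $a_{n-1},b_{n-1}\ge b/2$ for $n\ge 2$ (indeed $a_{n-1}=b_{n-2}\ge b$), the comparison with $(X/M^{b/2})^{\varphi_0+\phi^*+\varphi_1}$ is even more favourable at later steps than at the base case.
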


\begin{proof} One may follow the inductive strategy adopted in the proof of Lemma \ref{lemma7.3}. The 
key difference in the present setting is that Lemma \ref{lemma11.2} contains additional factors in the 
estimate made available for $\llbracket K_{a,b}^{r,r}(X)\rrbracket$. However, it follows from Lemma 
\ref{lemma11.2} that whenever $u\le u(a,b)$, then
$$(X/M^{b/2})^{-\Lam -\del}\llbracket K_{a,b}^{r,r}(X)\rrbracket \ll (M^{(\Lam+\del)b})^\Ome 
\prod_{m=0}^{u-1}\left( (X/M^{b/2})^{-\Lam-\del}
\llbracket I_{b,b_m}^{r,r}(X)\rrbracket \right)^{\phi_m},$$
where
$$\Ome=\tfrac{1}{2}-\tfrac{1}{2}\sum_{m=0}^{u-1}\phi_m-(\phi^*+\varphi_1).$$
But we have
$$\sum_{m=0}^{u-1}\phi_m=(s-r)\sum_{m=0}^{u-1}\left( \frac{1}{s-m-1}-\frac{1}{s-m}\right) 
=\frac{u(s-r)}{s(s-u)},$$
and hence $2s(s-u)(s+r)\Ome$ is equal to
\begin{align*}
-s(s+r)(s-u)&-u(s-r)(s+r)+2r(s-u)(s+r)-2r(r-u)s\\
&=-s^3+rs^2+urs-ur^2=-(s-r)(s^2-ur).
\end{align*}
Since we may suppose that $s>r$, $s>u$ and $s^2>ur$, we find that $\Ome <0$, and hence
$$(X/M^{b/2})^{-\Lam -\del}\llbracket K_{a,b}^{r,r}(X)\rrbracket \ll 
\prod_{m=0}^{u-1}\left( (X/M^{b/2})^{-\Lam-\del}
\llbracket I_{b,b_m}^{r,r}(X)\rrbracket \right)^{\phi_m}.$$
This relation serves as a substitute for Lemma \ref{lemma7.2} in the proof of Lemma \ref{lemma7.3}, 
the proof of which now applies without serious modification. This completes our account of the proof of 
Lemma \ref{lemma11.3}.
\end{proof}

We must now grapple with the somewhat daunting task of developing an analogue of Lemma 
\ref{lemma8.2} in the current setting. We now define the sequences $(\atil_n)=(\atil_n(\bfm))$ and 
$(\btil_n)=(\btil_n(\bfm))$ by putting
$$\atil_0=1/l\quad \text{and}\quad \btil_0=1,$$
and then applying the relations (\ref{8.3}). We define the parameters $\util_i=\util_i(\bfm)$ and 
$\vtil_i=\vtil_i(\bfm)$ for $1\le i\le R$ by putting
\begin{equation}\label{11.4}
\util_i(\bfm)=r\btil_{i-1}/(\atil_{i-1}+\btil_{i-1})\quad \text{and}\quad 
\vtil_i(\bfm)=\lfloor \util_i(\bfm)\rfloor .
\end{equation}
Next, we define the positive real number $s_0$ by means of the relation
\begin{equation}\label{11.5}
s_0^R=s^R\sum_{m_1=0}^{\vtil_1-1}\sum_{m_2=0}^{\vtil_2-1}\ldots \sum_{m_R=0}^{\vtil_R-1}
\phi_{m_1}\phi_{m_2}\ldots \phi_{m_R}\btil_R(\bfm),
\end{equation}
and then put
\begin{equation}\label{11.6}
k_\bfm=\btil_R(\bfm)\quad \text{and}\quad \rho_\bfm=\btil_R(\bfm)(s/s_0)^R\quad 
(\bfm\in [0,r-1]^R).
\end{equation}
We note here that the values of $k_\bfm$ and $\rho_\bfm$ are not relevant when $\bfm$ does not satisfy 
the condition $0\le m_i\le \vtil_i-1$ $(1\le i\le R)$, and so we are indifferent to the values stemming from 
(\ref{11.6}) in such circumstances.\par

Before proceeding further, we pause to relate $u_i$ and $\util_i$.

\begin{lemma}\label{lemma11.4} Suppose that $a\le b/l$ and $0\le h_n(\bfm)\le 16k^Rb$ 
$(1\le n\le R)$. Then one has $0\le \util_n(\bfm)\le u_n(\bfm;\bfh)$ $(1\le n\le R)$.
\end{lemma}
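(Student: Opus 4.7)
The strategy is to observe that by clearing denominators in (\ref{11.2}) and (\ref{11.4}), one has
$$\util_n(\bfm)=\frac{r}{1+\atil_{n-1}(\bfm)/\btil_{n-1}(\bfm)},\qquad u_n(\bfm;\bfh)=\frac{r}{1+a_{n-1}(\bfm;\bfh)/b_{n-1}(\bfm;\bfh)}.$$
Since the function $x\mapsto r/(1+x)$ is decreasing on $[0,\infty)$, the desired inequality $\util_n\le u_n$ will follow from the ratio comparison
\begin{equation*}
\frac{\atil_{n-1}(\bfm)}{\btil_{n-1}(\bfm)}\ge \frac{a_{n-1}(\bfm;\bfh)}{b_{n-1}(\bfm;\bfh)},\tag{$\ast$}
\end{equation*}
which I propose to establish by induction on $n$; non-negativity of $\util_n$ will come for free, since $\atil_{n-1},\btil_{n-1}\ge 0$ throughout.

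The base case $n=1$ is immediate from the initial data: $\atil_0/\btil_0=1/l$, while the hypothesis $a\le b/l$ together with $a_0=\lfloor b/l\rfloor$, $b_0=b$ gives $a_0/b_0\le 1/l$. For the inductive step, I would use the recurrences (\ref{7.7}) and (\ref{8.3}) together with the identities $\atil_n=\btil_{n-1}$ and $a_n=b_{n-1}$ to compute
$$\frac{\btil_n}{\btil_{n-1}}=(k-m_n)-m_n\frac{\atil_{n-1}}{\btil_{n-1}},\qquad \frac{b_n}{b_{n-1}}=(k-m_n)-m_n\frac{a_{n-1}}{b_{n-1}}+\frac{h_n(\bfm)}{b_{n-1}},$$
and subtract to obtain
$$\frac{b_n}{b_{n-1}}-\frac{\btil_n}{\btil_{n-1}}=m_n\Bigl(\frac{\atil_{n-1}}{\btil_{n-1}}-\frac{a_{n-1}}{b_{n-1}}\Bigr)+\frac{h_n(\bfm)}{b_{n-1}}\ge 0,$$
the non-negativity following from the inductive hypothesis together with the assumptions $m_n\ge 0$ and $h_n(\bfm)\ge 0$. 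Taking reciprocals of the two positive quantities and reinstating $\atil_n=\btil_{n-1}$, $a_n=b_{n-1}$ then yields $(\ast)$ at index $n+1$.

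The only genuine obstacle is to verify that the denominators $\btil_n$ and $b_n$ remain strictly positive throughout the induction, since the recurrence (\ref{8.3}) involves a subtraction. The key point, valid for those tuples $\bfm$ occurring in the pruned product of Lemma \ref{lemma11.3} (namely $0\le m_i\le \vtil_i-1\le \util_i-1$), is that $m_i\util_i^{-1}<1$ translates to $m_i(\atil_{i-1}+\btil_{i-1})<r\btil_{i-1}$, equivalently $m_i\atil_{i-1}<(r-m_i)\btil_{i-1}$, whence
$$\btil_i=(k-m_i)\btil_{i-1}-m_i\atil_{i-1}>(k-r)\btil_{i-1}=l\btil_{i-1}>0.$$
So $\btil_i$ even grows geometrically, preserving the positivity of both $\atil_i=\btil_{i-1}$ and $\btil_i$. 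The parallel positivity $b_i>0$ then follows at once from $b_0=b>0$ and the inequality $b_i/b_{i-1}\ge \btil_i/\btil_{i-1}>0$ derived in the inductive step. With positivity thus maintained, the reciprocal step of the induction is legitimate, and the proof is complete.
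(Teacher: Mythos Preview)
Your proof is correct and follows essentially the same inductive route as the paper's: both arguments reduce the inequality $\util_n\le u_n$ to a comparison of ratios $\atil_{n-1}/\btil_{n-1}\ge a_{n-1}/b_{n-1}$, verify the base case from $a_0\le b/l$, and in the inductive step exploit the recurrences together with $h_n\ge 0$. Your computation of $b_n/b_{n-1}-\btil_n/\btil_{n-1}$ is just a rearrangement of the paper's manipulation of $b_{n+1}/a_{n+1}$ versus $\btil_{n+1}/\atil_{n+1}$; the two are algebraically equivalent.

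One point worth noting: your explicit treatment of the positivity of $\btil_i$ (via the constraint $m_i<\util_i$ forcing $\btil_i>l\btil_{i-1}$) is actually more careful than the paper's account, which passes over this issue in silence. You are right that this positivity is only guaranteed for tuples $\bfm$ in the pruned range $0\le m_i\le \vtil_i-1$, and that without it the reciprocal step and indeed the very definition of $\util_n$ could fail. Since the lemma is only ever invoked for such tuples (in the proof of Lemma~\ref{lemma11.7}), this restriction is harmless, but it is good practice to make it explicit as you have done.
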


\begin{proof} We prove the asserted inequalities by induction, beginning with the case $n=1$. From 
(\ref{11.4}), we find on the one hand that
$$\util_1=r\btil_0/(\atil_0+\btil_0)=r/(1+1/l),$$
whilst on the other, from (\ref{11.2}), one has
$$u_1=rb_0/(a_0+b_0)=r/(1+a_0/b)\ge r(1+1/l).$$
Thus $\util_1\le u_1$, confirming the inductive hypothesis when $n=1$. Suppose next that $n\ge 1$ and 
$\util_n\le u_n$. One has
\begin{equation}\label{11.7}
\frac{b_{n+1}}{a_{n+1}+b_{n+1}}\ge \frac{\btil_{n+1}}{\atil_{n+1}+\btil_{n+1}}
\end{equation}
if and only if $b_{n+1}/a_{n+1}\ge \btil_{n+1}/\atil_{n+1}$, which is to say that
$$( (k-m_{n+1})b_n-m_{n+1}a_n+h_n)/b_n\ge ((k-m_{n+1})\btil_n-m_{n+1}\atil_n)/\btil_n.$$
This lower bound is satisfied if and only if $m_{n+1}\atil_n/\btil_n\ge (m_{n+1}a_n-h_n)/b_n$. The 
latter is automatically satisfied when either $m_{n+1}=0$ or $h_n\ge m_{n+1}a_n$, and otherwise it is 
equivalent to the upper bound
$$\frac{\btil_n}{\atil_n+\btil_n}\le \frac{b_n}{a_n+b_n-h_n/m_{n+1}}.$$
However, when $m_{n+1}>0$, in view of our hypothesis $\util_n\le u_n$, one has
$$\frac{\btil_n}{\atil_n+\btil_n}\le \frac{b_n}{a_n+b_n}\le \frac{b_n}{a_n+b_n-h_n/m_{n+1}}.$$
Thus we conclude from (\ref{11.2}), (\ref{11.4}) and (\ref{11.7}) that $\util_{n+1}\le u_{n+1}$. This 
confirms the inductive hypothesis, and hence we deduce that $0\le \util_n\le u_n$ for $1\le n\le R$, as 
claimed.
\end{proof}

We next introduce some additional quantities of use in our ultimate application of Lemma \ref{lemma8.1}. 
Define the exponents $\bet_{\bfm}^{(l)}$ and $\gam_\bfm^{(l)}$ just as in the discussion following 
(\ref{8.8}) above. We now put
\begin{equation}\label{11.8}
B_n=\sum_{m_1=0}^{\vtil_1-1}\ldots \sum_{m_n=0}^{\vtil_n-1}\bet_\bfm^{(n)}\btil_n(\bfm)
\quad \text{and}\quad 
A_n=\sum_{m_1=0}^{\vtil_1-1}\ldots \sum_{m_n=0}^{\vtil_n-1}\bet_\bfm^{(n)}\atil_n(\bfm).
\end{equation}
In order to understand these sequences, we introduce some auxiliary sequences $\Atil_n$ and $\Btil_n$ 
as follows. We put
\begin{equation}\label{11.9}
\gra=\tfrac{1}{2}k(k+1)-\tfrac{1}{3}k-3k^{2/3}\quad \text{and}\quad 
\grb=\tfrac{1}{2}k(k-1)+3k^{5/3}.
\end{equation}
The sequence $\Btil_n$ is then defined for $n\ge 1$ by means of the relations
\begin{equation}\label{11.10}
s\Btil_1=\gra-\grb/l,\quad s^2\Btil_2=\gra(\gra-\grb/l)-r\grb,
\end{equation}
and
\begin{equation}\label{11.11}
s^2\Btil_{n+2}=s\gra\Btil_{n+1}-r\grb \Btil_n\quad (n\ge 1).
\end{equation}
Finally, we put
\begin{equation}\label{11.12}
\tet_\pm=\tfrac{1}{2}\left( \gra\pm \sqrt{\gra^2-4r\grb}\right) .
\end{equation}

\begin{lemma}\label{lemma11.5}
When $n\ge 1$, one has $B_n\ge \Btil_n$. In particular, one has the lower bound $B_R\ge \Btil_R$, and 
hence
$$s_0^R\ge \frac{\tet_+^{R+1}-\tet_-^{R+1}}{\tet_+-\tet_-}-\frac{\tet_+\tet_-}{lr}
\left( \frac{\tet_+^R-\tet_-^R}{\tet_+-\tet_-}\right) .$$
\end{lemma}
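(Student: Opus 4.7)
The plan is to prove the two parts of the lemma in sequence: the pointwise bound $B_n \geq \tilde{B}_n$ by induction, and the closed-form solution of the recurrence defining $\tilde{B}_n$.

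For the closed form, the recurrence (11.11) is linear with characteristic polynomial $s^2 x^2 - s\mathfrak{a} x + r\mathfrak{b}$. Since $\mathfrak{a} = \theta_+ + \theta_-$ and $r\mathfrak{b} = \theta_+\theta_-$ by (11.12), the roots are $x = \theta_\pm/s$, so the general solution has the form $s^n\tilde{B}_n = C_+\theta_+^n + C_-\theta_-^n$. Matching the constants $C_\pm$ via the initial data (11.10) produces the displayed formula, which one confirms by direct substitution at $n = 0, 1$. Since (11.5) and (11.8) give $s^R B_R = s_0^R$, the second displayed inequality in the lemma is a direct consequence of $B_R \geq \tilde{B}_R$.

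For $B_n \geq \tilde{B}_n$, I argue by induction on $n$. A preliminary pointwise fact is $\tilde{b}_n(\mathbf{m}) \geq l\tilde{a}_n(\mathbf{m})$, proved by induction from (8.3): $\tilde{b}_n - l\tilde{a}_n = (r - m_n)\tilde{b}_{n-1} - m_n\tilde{a}_{n-1}$, which is non-negative since $m_n \leq \tilde{v}_n$ gives $m_n(\tilde{a}_{n-1} + \tilde{b}_{n-1}) \leq r\tilde{b}_{n-1}$ by (11.4). Expanding $B_{n+1}$ via (8.3) and (11.8) yields
\begin{equation*}
sB_{n+1} = \sum_{\mathbf{m}'}\phi_{m_1}\cdots\phi_{m_n}\bigl[s\alpha(\mathbf{m}')\tilde{b}_n - s\beta(\mathbf{m}')\tilde{a}_n\bigr],
\end{equation*}
where $\alpha(\mathbf{m}') = \sum_{m=0}^{v-1}(k-m)\phi_m$, $\beta(\mathbf{m}') = \sum_{m=0}^{v-1}m\phi_m$, and $v = \tilde{v}_{n+1}(\mathbf{m}')$. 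The telescoping identity $\phi_m = (s-r)[(s-m-1)^{-1} - (s-m)^{-1}]$ puts $\sigma := \sum_{m=0}^{v-1}\phi_m$, $\alpha$, and $\beta$ into explicit rational form in $v, s, r$.

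The heart of the argument is the pointwise inequality $s\alpha(\mathbf{m}')\tilde{b}_n - s\beta(\mathbf{m}')\tilde{a}_n \geq \mathfrak{a}\tilde{b}_n - \mathfrak{b}\tilde{a}_n$. Writing $t = \tilde{b}_n/\tilde{a}_n \in [l, k]$ and using $v = \lfloor rt/(1+t)\rfloor$, this reduces to $(s\alpha - \mathfrak{a}) + (\mathfrak{b} - s\beta)/t \geq 0$. The constants $3k^{2/3}$ in $\mathfrak{a}$ and $3k^{5/3}$ in $\mathfrak{b}$ of (11.9) are calibrated precisely so this holds throughout $t \in [l, k]$: at $t = l$ (largest truncation deficit in $\alpha$, of order $k^{4/3}$), dividing the $3k^{5/3}$-cushion of $\mathfrak{b}$ by $l \geq k^{1/3}$ yields a compensation of order $3k^{4/3}$ that dominates; at $t$ of order $k$ (minimal truncation), the $3k^{2/3}$-cushion in $\mathfrak{a}$ suffices directly. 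The hypotheses $l = \lceil k^{1/3}\rceil$ and $s + r \leq \tfrac{1}{2}k(k+1) - \tfrac{1}{3}k - 8k^{2/3}$ (which force $\Delta \leq k/3 + O(1)$) pin down the baseline full-sum values $s\sum_{m=0}^{r-1}(k-m)\phi_m = kr - \tfrac{1}{2}r(r-1) - \Delta$ and $s\sum_{m=0}^{r-1}m\phi_m = \tfrac{1}{2}r(r-1) + \Delta$; interpolation in $t$ via the closed-form expressions confirms positivity throughout. Summing over $\mathbf{m}'$ with weights $\phi_{m_1}\cdots\phi_{m_n}$ delivers $sB_{n+1} \geq \mathfrak{a}B_n - \mathfrak{b}A_n$, which combined with the companion bound $sA_n \leq rB_{n-1}$ (from $s\sigma \leq r$) yields $s^2 B_{n+1} \geq s\mathfrak{a}B_n - r\mathfrak{b}B_{n-1}$, matching the structural form of (11.11); a comparison principle for linear recurrences with positive characteristic roots $\theta_\pm$ then propagates $B_n \geq \tilde{B}_n$ for all $n \geq 1$. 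The main obstacle is the pointwise inequality just described, whose uniform verification across the admissible $(t, v)$ regime requires delicate quantitative estimation of the truncated partial sums.
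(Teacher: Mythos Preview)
Your approach mirrors the paper's closely: both establish the recurrence inequality $s^2 B_{n+1} \ge s\gra B_n - r\grb B_{n-1}$ by a pointwise estimate (your inequality $s\alpha\btil_n - s\beta\atil_n \ge \gra\btil_n - \grb\atil_n$ is precisely the paper's bound on the quantity $\Ups$ in (11.18), repackaged), and then compare with the explicit solution $\Btil_n$ of the associated recurrence equation. Your parametrisation by $t = \btil_n/\atil_n \in [l,k]$ is a clean way to organise the calculation the paper carries out in terms of $h = r-v$.

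One point deserves more care than you give it. The ``comparison principle for linear recurrences with positive characteristic roots'' is not automatic: because the coefficient of $B_{n-1}$ in the recurrence inequality is \emph{negative}, knowing $B_{n-1}\ge\Btil_{n-1}$ and $B_n\ge\Btil_n$ does not by itself force $B_{n+1}\ge\Btil_{n+1}$. (For instance, with $\gra=3$, $r\grb=2$, roots $2$ and $1$, the differences $D_1=10$, $D_2=1$ give $D_3\le 3\cdot 1-2\cdot 10<0$.) The paper handles this by setting $\gtil_n = s^n(B_n-\Btil_n)$ and proving by induction the stronger invariant $\gtil_{n+1}\ge s\gtil_n\Btil_{n+1}/\Btil_n$, which does propagate and implies $\gtil_n\ge 0$. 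An alternative, slightly simpler invariant that also works is $\gtil_{n+1}\ge\tet_-\gtil_n$; either way, the base case requires not just $B_1\ge\Btil_1$ but a ratio condition relating $B_2-\Btil_2$ to $B_1-\Btil_1$, which emerges from the sharper bound $s^2 B_2\ge s\gra B_1 - r\grb$ (the paper's initial data analysis) rather than merely $B_2\ge\Btil_2$. This is a genuine step in the argument, not a black-box principle, and your sketch should flag it.
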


\begin{proof} We have
\begin{equation}\label{11.13}
B_{n+1}=\sum_{m_1=0}^{\vtil_1-1}\ldots \sum_{m_n=0}^{\vtil_n-1}\bet_\bfm^{(n)}
\sum_{m_{n+1}=0}^{\vtil_{n+1}-1}\phi_{m_{n+1}}\btil_{n+1}(\bfm).
\end{equation}
The innermost sum here is
\begin{equation}\label{11.14}
\sum_{m_{n+1}=0}^{\vtil_{n+1}-1}\phi_{m_{n+1}}\btil_{n+1}(\bfm)=\grB(\vtil_{n+1}) 
\btil_n(\bfm)-\grA(\vtil_{n+1})\atil_n(\bfm),
\end{equation}
where
$$\grB(u)=\sum_{m=0}^{u-1}(k-m)\phi_m\quad \text{and}\quad 
\grA(u)=\sum_{m=0}^{u-1}m\phi_m.$$

\par We observe that when $u$ is a positive integer, one has
\begin{align*}
\grB(u)&=\sum_{m=0}^{u-1}\frac{(k-m)(s-r)}{(s-m)(s-m-1)}\\
&=\sum_{m=0}^{u-1}\left( \frac{(r-m)(k-m)}{s-m}-\frac{(r-m-1)(k-m-1)}{s-m-1}-\frac{r-m-1}{s-m-1}
\right) ,
\end{align*}
whence
\begin{align*}
s\grB(u)=&\, kr-\frac{s(r-u)(k-u)}{s-u}-s\sum_{m=1}^u\frac{r-m}{s-m}\\
=&\, u(k+r-u)-\frac{u(r-u)(k-u)}{s-u}-\tfrac{1}{2}r(r-1)\\
&\, \ \ \ \ \ \ \ \ \ \ \ +\tfrac{1}{2}(r-u-1)(r-u)-\sum_{m=1}^u\frac{m(r-m)}{s-m}.
\end{align*}
Thus we conclude that
\begin{equation}\label{11.15}
s\grB(u)=ku-\tfrac{1}{2}u(u-1)-\frac{u(r-u)(k-u)}{s-u}-\sum_{m=1}^u\frac{m(r-m)}{s-m}.
\end{equation}
One finds in like manner that
\begin{align}
s\grA(u)&=sk\sum_{m=0}^{u-1}\phi_m-s\sum_{m=0}^{u-1}(k-m)\phi_m\notag \\
&=\frac{ku(s-r)}{s-u}-\left( ku-\tfrac{1}{2}u(u-1)-\frac{u(r-u)(k-u)}{s-u}-\sum_{m=1}^u
\frac{m(r-m)}{s-m}\right)\notag \\
&=\tfrac{1}{2}u(u-1)-\frac{u^2(r-u)}{s-u}+\sum_{m=1}^u\frac{m(r-m)}{s-m}.\label{11.16}
\end{align}

Write
$$v=\vtil_{n+1},\quad h=r-v=k-l-v\quad \text{and}\quad \Deltil=\sum_{m=1}^v\frac{m(r-m)}{s-m}.$$
Then we see from (\ref{11.15}) that $s\grB(v)$ is equal to
\begin{align*}
k(k-l-h)&-\tfrac{1}{2}(k-l-h)(k-l-h-1)-\frac{h(l+h)(k-l-h)}{s-k+l+h}-\Deltil\\
&=\tfrac{1}{2}k(k+1)-\tfrac{1}{2}(l+h)(l+h+1)-\frac{h(l+h)(k-l-h)}{s-k+l+h}-\Deltil .
\end{align*}
Also, from (\ref{11.16}), we find that $s\grA(v)$ is equal to
\begin{align*}
\tfrac{1}{2}&(k-l-h)(k-l-h-1)-\frac{h(k-l-h)^2}{s-k+l+h}+\Deltil \\
&=\tfrac{1}{2}k(k-1)-(l+h)k+\tfrac{1}{2}(l+h)(l+h+1)-\frac{h(k-l-h)^2}{s-k+l+h}+\Deltil .
\end{align*}
On substituting these formulae into (\ref{11.14}), we find that
\begin{equation}\label{11.17}
s\sum_{m_{n+1}=0}^{\vtil_{n+1}-1}\phi_{m_{n+1}}
\btil_{n+1}(\bfm)=\gra_0\btil_n(\bfm)-\grb_0\atil_n(\bfm)-\Ups,
\end{equation}
where
\begin{align*}
\gra_0&=\tfrac{1}{2}k(k+1)-\tfrac{1}{2}l(l+1)-\Deltil,\\
\grb_0&=\tfrac{1}{2}k(k-1)-lk+\tfrac{1}{2}l(l+1)+\Deltil ,
\end{align*}
and
\begin{align}
\Ups=&\, (lh+\tfrac{1}{2}h(h+1))(\atil_n+\btil_n)+\frac{h(l+h)(k-l-h)}{s-k+l+h}\btil_n\notag \\
&\, -\left( hk+\frac{h(k-l-h)^2}{s-k+l+h}\right) \atil_n.\label{11.18}
\end{align}

\par We next seek to estimate the expression $\Ups$, this requiring us to obtain an upper bound for $h$. 
We begin with a proof that $\vtil_n(\bfm)\ge k(1-2/l)$ for $1\le n\le R$. Note first that, since we assume 
$k$ to be sufficiently large and $l=\lceil k^{1/3}\rceil$, one finds from (\ref{11.4}) that
$$\util_1=r\btil_0/(\atil_0+\btil_0)=(k-l)/(1+1/l)>k-2k/l+1.$$
Meanwhile, when $n\ge 1$, the condition
$$m_n\le \vtil_n=r\btil_{n-1}/(\atil_{n-1}+\btil_{n-1})$$
that follows from (\ref{11.4}) ensures that
$$\btil_n=k\btil_{n-1}-m_n(\atil_{n-1}+\btil_{n-1})\ge (k-r)\btil_{n-1}=l\btil_{n-1},$$
whence
$$\util_{n+1}=r\btil_n/(\atil_n+\btil_n)=r/(1+\btil_{n-1}/\btil_n)\ge r/(1+1/l)>k-2k/l+1.$$
We therefore deduce that $\vtil_n\ge k(1-2/l)$ for $n\ge 1$, as we had claimed. It follows, in particular, 
that
$$h=k-l-v\le k-l-k(1-2/l)\le 2k/l\le 2k^{2/3}.$$
We use this opportunity also to recall the bounds $s\ge \tfrac{1}{2}k(k+1)-4k$ and $l\le k^{1/3}+1$.\par

Before exploiting the crude bound for $h$ just obtained, we make use of the more opaque though 
stronger bound
$$h=r-\lfloor r\btil_n/(\atil_n+\btil_n)\rfloor \le r\atil_n/(\atil_n+\btil_n)+1.$$
On substituting these bounds into (\ref{11.18}), we see that
\begin{align*}
\Ups&\le (l+\tfrac{1}{2}(h+1))r\atil_n+(l+\tfrac{1}{2}(h+1))(\atil_n+\btil_n)+9k^{1/3}\btil_n\\
&\le 2k^{5/3}\atil_n+2k^{2/3}\btil_n.
\end{align*}
Finally, we observe that
$$\Deltil \le \sum_{m=1}^{k-4}\frac{m(k-3-m)}{s-k+4}\le 
\frac{\frac{1}{6}(k-2)(k-3)(k-4)}{\frac{1}{2}k(k+1)-5k+4}<\tfrac{1}{3}k+1.$$
By combining these estimates with (\ref{11.17}), we arrive at the relation
\begin{align*}
s\sum_{m_{n+1}=0}^{\vtil_{n+1}-1}\phi_{m_{n+1}}
\btil_{n+1}(\bfm)\ge &\, \left( \tfrac{1}{2}k(k+1)-\tfrac{1}{3}k-3k^{2/3}\right)\btil_n(\bfm)\\
&\, -\left( \tfrac{1}{2}k(k-1)+3k^{5/3}\right) \atil_n(\bfm)\\
&=\gra \btil_n(\bfm)-\grb \atil_n(\bfm).
\end{align*}
Then we deduce from (\ref{11.13}) that
\begin{equation}\label{11.19}
sB_{n+1}\ge \gra B_n-\grb A_n\quad (n\ge 1).
\end{equation}
On the other hand, we have
\begin{align*}
s\sum_{m_{n+1}=0}^{\vtil_{n+1}-1}\phi_{m_{n+1}}\atil_{n+1}(\bfm)&=s\btil_n(\bfm)
\sum_{m=0}^{\vtil_{n+1}-1}\phi_m\\
&=\btil_n(\bfm)\frac{\vtil_{n+1}(s-r)}{s-\vtil_{n+1}}\le r\btil_n(\bfm),
\end{align*}
whence
\begin{equation}\label{11.20}
sA_{n+1}\le rB_n \quad (n\ge 1).
\end{equation}

By combining (\ref{11.19}) and (\ref{11.20}), we conclude at this point that
\begin{equation}\label{11.21}
s^2B_{n+2}\ge s\gra B_{n+1}-r\grb B_n\quad (n\ge 1).
\end{equation}
In addition, we have the initial data
$$sB_1\ge \gra \btil_0-\grb \atil_0=\gra-\grb/l,\quad \text{and}\quad sA_1\le r\btil_0=r,$$
and hence
$$s^2B_2\ge s\gra B_1-s\grb A_1\ge \gra(\gra-\grb/l)-\grb r.$$
Our goal is now to extract from the recurrence inequality (\ref{11.21}) a lower bound for $B_n$. Were 
we to be presented with an equation, this would be straightforward, but in present circumstances we must 
work less directly by relating $B_n$ to $\Btil_n$. By reference to (\ref{11.11}), we see that $\Btil_n$ 
satisfies a recurrence equation related to the inequality (\ref{11.19}).\par

Our first observation is that the recurrence formula (\ref{11.11}) has a solution of the shape
$$s^n\Btil_n=\sig_+\tet_+^n+\sig_-\tet_-^n\quad (n\ge 1),$$
where
$$\sig_+\tet_++\sig_-\tet_-=s\Btil_1=\gra-\grb/l$$
and
$$\sig_+\tet_+^2+\sig_-\tet_-^2=s^2\Btil_2=\gra(\gra-\grb/l)-\grb r.$$
Then we deduce that
\begin{equation}\label{11.22}
s^n\Btil_n=\frac{\tet_+^{n+1}-\tet_-^{n+1}}{\tet_+-\tet_-}-\frac{\tet_+\tet_-}{lr}\left( 
\frac{\tet_+^n-\tet_-^n}{\tet_+-\tet_-}\right) .
\end{equation}
Note that since $k$ is large, it follows from (\ref{11.9}) and (\ref{11.12}) that
\begin{align}
\tet_+&=\tfrac{1}{2}k(k+1)-\tfrac{1}{3}k-3k^{2/3}-\grb r/\gra +O((\grb r)^2/\gra^3)\notag \\
&\ge \tfrac{1}{2}k(k-1)-\tfrac{1}{3}k-7k^{2/3},\label{11.23}
\end{align}
and
$$\tet_-\le \grb r/\gra+O((\grb r)^2/\gra^3)\le k+7k^{2/3}.$$
Thus, in particular, we deduce from (\ref{11.22}) that $\Btil_n$ is positive for every natural number $n$.
\par

We now reinterpret the recurrence inequality (\ref{11.21}) as a recurrence equation with variable shifts. 
Let $(g_n)$ be a sequence of positive real numbers. Then for an appropriate choice of this sequence
 $(g_n)$, the recurrence sequence $B_n$ may be interpreted as the solution of the new recurrence 
sequence
\begin{equation}\label{11.24}
s^2B_{n+2}=s\gra B_{n+1}-r\grb B_n+g_n\quad (n\ge 1),
\end{equation}
with the initial data
\begin{equation}\label{11.25}
sB_1=\gra-\grb/l+g_1\quad \text{and}\quad s^2B_2=\gra(\gra-\grb/l+g_1)-\grb r+g_2.
\end{equation}
Define the real numbers $\gtil_n$ by means of the relation
\begin{equation}\label{11.26}
\gtil_n=s^n(B_n-\Btil_n).
\end{equation}
Thus, in particular, it follows from (\ref{11.10}) and (\ref{11.25}) that
\begin{equation}\label{11.27}
\gtil_1=g_1\quad \text{and}\quad \gtil_2=\gra g_1+g_2,
\end{equation}
and from (\ref{11.11}) and (\ref{11.24}) that
\begin{equation}\label{11.28}
\gtil_{n+2}=\gra \gtil_{n+1}-r\grb \gtil_n+g_n \quad (n\ge 1).
\end{equation}
We claim that $\gtil_n\ge 0$, whence also $B_n\ge \Btil_n$,  for each natural number $n$.\par

In order to verify the last claim, we prove by induction that for each natural number $n$, one has
\begin{equation}\label{11.29}
\gtil_{n+1}\ge s\gtil_n \Btil_{n+1}/\Btil_n.
\end{equation}
We first confirm this inductive hypothesis for $n=1$. Here we note that when $\gtil_1=g_1=0$, then the 
trivial lower bound $\gtil_2=g_2\ge 0$ that follows from (\ref{11.27}) suffices to confirm (\ref{11.29}). 
When $g_1\ne 0$, meanwhile, we find from (\ref{11.10}) and (\ref{11.27}) that
\begin{align*}
\gtil_2&=\gtil_1\left( \frac{\gra g_1+g_2}{g_1}\right) =\gtil_1\left( \gra+g_2/g_1\right) \\
&\ge \gtil_1 \left( \frac{\gra(\gra-\grb/l)-\grb r}{\gra-\grb/l}\right) =s\gtil_1\Btil_2/\Btil_1 .
\end{align*}
This confirms the inductive hypothesis when $n=1$.

\par Suppose next that $n\ge 2$ and
$$\gtil_n\ge s\gtil_{n-1}\Btil_n/\Btil_{n-1}.$$
Then, on making use of (\ref{11.11}) and (\ref{11.28}), one deduces that
\begin{align*}
\gtil_{n+1}-s\gtil_n\Btil_{n+1}/\Btil_n&=\left( \gra \gtil_n -r\grb \gtil_{n-1}+g_{n-1}\right) 
-\gtil_n\left( \gra \Btil_n-s^{-1}r\grb \Btil_{n-1}\right) /\Btil_n\\
&\ge (s^{-1}r\grb \Btil_{n-1}/\Btil_n)\left( \gtil_n -s\gtil_{n-1}\Btil_n/\Btil_{n-1}\right) \ge 0.
\end{align*}
This confirms the inductive hypothesis with $n+1$ in place of $n$, and so by applying induction we 
deduce that (\ref{11.29}) holds for every natural number $n$. A particular consequence of the lower 
bound (\ref{11.29}) is that $\gtil_n\ge 0$ for every natural number $n$. Finally, we conclude from 
(\ref{11.26}) that $B_n\ge \Btil_n$ for every natural number $n$.\par

In order to complete the proof of the lemma, we note first from (\ref{11.5}) and (\ref{11.8}) that 
$s_0^R=s^RB_R$. Since $B_R\ge \Btil_R$, the final conclusion of the lemma follows from (\ref{11.22}).
\end{proof}

We next turn to the problem of controlling the behaviour of $a_R$ and $b_R$.

\begin{lemma}\label{lemma11.6}
Suppose that the tuples $\bfm$ and $\bfh(\bfm)$ satisfy the conditions
$$1\le m_n\le \util_n(\bfm)-1\quad \text{and}\quad 0\le h_n(\bfm)\le 15k^Rb\quad (1\le n\le R).$$
Then one has
$$k_\bfm b\le b_R(\bfm;\bfh)\le k_\bfm b+16k^{2R}b,$$
and furthermore
$$a_R(\bfm;\bfh)\le b_R(\bfm;\bfh)/l.$$ 
\end{lemma}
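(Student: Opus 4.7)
The plan is to follow the template established in the proof of Lemma \ref{lemma8.2}, replacing the uniform bound $m_n\le r-1$ used there by the refined hypothesis $m_n\le \util_n(\bfm)-1$, and the parameter $\sqrt{k}$ by $l$. As in that proof, we introduce adjusted heights $h_1^*(\bfm)=h_1(\bfm)+m_1(b/l-a_0)$ and $h_n^*(\bfm)=h_n(\bfm)$ for $n>1$, giving rise to a rescaled sequence $(a_n^*,b_n^*)$ built from $a_0^*=b/l$, $b_0^*=b$ that coincides with $(a_n,b_n)$ for $n\ge 1$. Under the current hypotheses one has $h_n^*(\bfm)\le 16k^Rb$ throughout. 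Linearity of the defining recurrence then yields a decomposition
$$b_n^*(\bfm;\bfh)=\btil_n(\bfm)b+\sum_{j=1}^Rc_{n,j}(\bfm;\bfh),$$
with $c_{j,j}=h_j^*$, $c_{n,j}=0$ for $n<j$, and
$$c_{n+1,j}=(k-m_{n+1})c_{n,j}-m_{n+1}c_{n-1,j}\quad (n\ge j).$$

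The crux of the argument is the inductive invariant
$$c_{n+1,j}(\bfm;\bfh)\ge \frac{\btil_{n+1}(\bfm)}{\btil_n(\bfm)}c_{n,j}(\bfm;\bfh)\quad (n\ge j),$$
which replaces equation (\ref{8.20}) of Lemma \ref{lemma8.2}. The base case $n=j$ holds trivially since $c_{j-1,j}=0$. For the inductive step one uses the identity $\btil_{n+1}/\btil_n=(k-m_{n+1})-m_{n+1}(\btil_{n-1}/\btil_n)$, which is immediate from the recurrence for $\btil$, together with the hypothesis
$$m_{n+1}\le \util_{n+1}(\bfm)-1<\frac{r\btil_n(\bfm)}{\btil_{n-1}(\bfm)+\btil_n(\bfm)},$$
to conclude that whenever $c_{n,j}/c_{n-1,j}\ge \btil_n/\btil_{n-1}$ the analogous relation holds with $n$ replaced by $n+1$. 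The same hypothesis on $m_n$, applied directly to the recurrence for $\btil_n$, shows that $\btil_{n+1}/\btil_n>l$ for every $n\ge 0$, exactly as observed in the proof of Lemma \ref{lemma11.5}. Combining these, one obtains $c_{n+1,j}\ge l\cdot c_{n,j}$, and in particular $c_{n,j}\ge 0$ for all relevant $n$ and $j$.

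With the invariant secured, the conclusions follow swiftly. The lower bound $c_{R,j}\ge 0$ gives $b_R\ge \btil_R(\bfm)b=k_\bfm b$. The trivial upper estimate $c_{n+1,j}\le kc_{n,j}$, a consequence of $c_{n-1,j}\ge 0$ and the defining recurrence, iterates to $c_{R,j}\le k^{R-j}h_j^*(\bfm)\le 16k^{2R-j}b$; summing the resulting geometric series in $j$ delivers $b_R\le k_\bfm b+16k^{2R}b$. For the final bound, one combines $\btil_R/\btil_{R-1}\ge l$ with $c_{R,j}/c_{R-1,j}\ge l$ for every $j<R$ (both immediate from the invariant) in a term-by-term comparison to yield $b_R\ge l b_{R-1}$, whence $a_R=b_{R-1}\le b_R/l$.

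The principal obstacle is the proof of the inductive invariant. The naive surrogate $c_{n+1,j}\ge l\cdot c_{n,j}$ is too weak to propagate, because $m_{n+1}$ can be as large as $r-1$, which is of order $k$ and swamps the gain factor $l\sim k^{1/3}$. The remedy is to carry the more precise ratio $\btil_{n+1}/\btil_n$ through the induction; this ratio matches the recurrence for $c_{\cdot,j}$ exactly and is controlled uniformly from below by $l$ thanks to the refined hypothesis $m_n\le \util_n-1$, which is not available in the setting of Lemma \ref{lemma8.2}.
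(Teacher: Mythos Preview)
Your proposal is correct and arrives at the same conclusions as the paper, via a closely related but not identical argument. You follow the $c_{n,j}$-decomposition of Lemma~\ref{lemma8.2} and establish the ratio invariant $c_{n+1,j}\ge(\btil_{n+1}/\btil_n)c_{n,j}$ for each $j$ separately; the paper instead works with the aggregate quantity $g_n^*=b_n^*-\btil_n b=\sum_j c_{n,j}$ and proves the single inequality $g_{n-1}^*\btil_n\le\btil_{n-1}g_n^*$ directly by induction. The inductive computations are essentially the same (both exploit $\btil_{n+1}/\btil_n=(k-m_{n+1})-m_{n+1}\btil_{n-1}/\btil_n$), and the paper's version is marginally cleaner since it avoids carrying the extra index. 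One small point of phrasing: the hypothesis $m_{n+1}<r\btil_n/(\btil_{n-1}+\btil_n)$ is not actually needed for the inductive step of your ratio invariant (that step uses only the recurrence identity and the previous instance of the invariant); it is needed solely to secure $\btil_{n+1}/\btil_n\ge l$, which in turn feeds the conclusions $c_{n,j}\ge l\,c_{n-1,j}$ and $b_R\ge l\,b_{R-1}$.
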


\begin{proof} We again make use of auxiliary recurrence sequences in order to disentangle information 
on recurrence inequalities. Write
$$h_1^*(\bfm)=h_1(\bfm)+m_1(b/l-a_0),$$
and
$$h_n^*(\bfm)=h_n(\bfm)\quad (n>1).$$
Then we find that
$$h_1^*(\bfm)\le h_1(\bfm)+m_1\le h_1(\bfm)+k<16k^Rb,$$
and
$$h_n^*(\bfm)=h_n(\bfm)\le 15k^Rb\quad (n>1).$$
We define the sequences $(a_n^*)=(a_n^*(\bfm;\bfh))$ and $(b_n^*)=(b_n^*(\bfm;\bfh))$ by means 
of the relations
$$a_0^*=b/l\quad \text{and}\quad b_0^*=b,$$
and
$$a_n^*=b_{n-1}^*\quad \text{and}\quad 
b_n^*=(k-m_n)b_{n-1}^*-m_na_{n-1}^*+h_n^*(\bfm)\quad (1\le n\le R).$$
It follows that $a_n=a_n^*$ and $b_n=b_n^*$ for $n\ge 1$. We next put
\begin{equation}\label{11.30}
g_n^*=b_n^*-\btil_n b\quad (n\ge 0),
\end{equation}
and we seek to show in the first instance that $g_n^*\ge 0$ for each $n$. This of course implies in 
particular that $b_R^*\ge \btil_R b=k_\bfm b$.\par

We prove that $g_n^*\ge 0$ for each $n$ by induction. Observe first that by applying (\ref{11.30}) in 
combination with the recurrence relations for $\atil_n$, $\btil_n$, $a_n^*$, $b_n^*$, we obtain
\begin{align*}
g_0^*\btil_1-\btil_0g_1^*=&\, (b_0^*-\btil_0 b)((k-m_1)\btil_0-m_1\atil_0)\\
&\,-\btil_0((k-m_1)(b_0^*-\btil_0 b)-m_1(a_0^*-\atil_0 b)+h_1^*),
\end{align*}
whence $g_0^*\btil_1-\btil_0g_1^*=-h_1^*\le 0$. Moreover, if we assume that 
\begin{equation}\label{11.31}
g_{n-1}^*\btil_n-\btil_{n-1}g_n^*\le 0,
\end{equation}
then we find in like manner that
\begin{align*}
g_n^*\btil_{n+1}-\btil_n g_{n+1}^*=&\, (b_n^*-\btil_n b)((k-m_{n+1})\btil_n-m_{n+1}\atil_n)\\
&\, -\btil_n ((k-m_{n+1})(b_n^*-\btil_n b)-m_{n+1}(a_n^*-\atil_n b)+h_{n+1}^*).
\end{align*}
Thus we obtain the bound
\begin{align*}
g_n^*\btil_{n+1}-\btil_n g_{n+1}^*&\le m_{n+1}(a_n^*\btil_n-b_n^*\atil_n)\\
&=m_{n+1}((g_{n-1}^*+\btil_{n-1}b)\btil_n-(g_n^*+\btil_n b)\btil_{n-1})\\
&=m_{n+1}(g_{n-1}^*\btil_n-g_n^*\btil_{n-1}).
\end{align*}
We thus conclude from (\ref{11.31}) that $g_n^*\btil_{n+1}-\btil_n g_{n+1}^*\le 0$, thereby 
confirming the inductive hypothesis (\ref{11.31}) with $n+1$ in place of $n$. We therefore deduce by 
induction that $g_{n-1}^*\btil_n\le \btil_{n-1}g_n^*$ for every $n$, whence
\begin{equation}\label{11.32}
g_n^*\ge g_{n-1}^*\btil_n/\btil_{n-1}\ge 0\quad (n\ge 1).
\end{equation}
In this way, we therefore conclude that $b_R=b_R^*\ge \btil_R b=k_\bfm b$, as desired.\par

Having confirmed the lower bound on $b_R(\bfm;\bfh)$ claimed in the statement of the lemma, we 
turn our attention next to the upper bound. Observe that the recurrence relations for $\atil_n$, $\btil_n$, 
$a_n^*$, $b_n^*$ lead us from (\ref{11.30}) to the relation
$$g_n^*=(k-m_n)g_{n-1}^*-m_ng_{n-2}^*+h_n^*(\bfm)\quad (n\ge 2).$$ 
We therefore have the trivial upper bound
$$g_n^*\le kg_{n-1}^*+15k^Rb\quad (n\ge 2),$$
and so we deduce by induction that $g_n^*\le 16k^{R+n}b$. Hence, on recalling (\ref{11.30}) once 
again, we obtain the bound 
$$b_R^*\le \btil_R b+16k^{2R}b=k_\bfm b+16k^{2R}b.$$

\par Collecting together the conclusions of the last two paragraphs, we find that
$$k_\bfm b\le b_R(\bfm;\bfh)\le k_\bfm b+16k^{2R}b,$$
thereby confirming the first assertion of the lemma. It remains now only to bound $a_R$ in terms of 
$b_R$. To this end, we observe that for each $n\ge 0$, it follows from (\ref{11.4}) and our hypothesis 
$m_{n+1}\le \util_n(\bfm)-1$ that $m_{n+1}\le r\btil_n/(\atil_n+\btil_n)$, and hence
$$\btil_{n+1}\ge (k-m_{n+1})\btil_n-m_{n+1}\atil_n\ge (k-r)\btil_n=l\btil_n.$$
We therefore deduce from (\ref{11.32}) that $g_{n+1}^*\ge lg_n^*$ for $n\ge 0$. Consequently, 
recalling again the relation (\ref{11.30}), we see that
$$b_R=b_R^*=\btil_Rb+g_R^*\ge l(\btil_{R-1}b+g_{R-1}^*)=lb_{R-1}^*=la_R.$$
We have therefore shown that $a_R\le b_R/l$, as desired. This completes the proof of the lemma. 
\end{proof}

Before initiating our discussion of the next lemma, we observe that in view of (\ref{11.23}) and our 
hypotheses concerning $s$, one has
$$\tet_++r\ge \tfrac{1}{2}k(k-1)-\tfrac{1}{3}k-7k^{2/3}+(k-l)>s+r.$$
Then it follows that, provided $R$ is sufficiently large in terms of $s$ and $k$, one has $s<\tet_+$, and 
so we deduce from Lemma \ref{lemma11.5} that
\begin{equation}\label{11.33}
s<s_0.
\end{equation}

\begin{lemma}\label{lemma11.7}
Suppose that $\Lam\ge 0$, let $a$ and $b$ be integers with $0\le a<b\le (20Rk^{2R}\tet)^{-1}$, and 
suppose further that $a\le b/l$. Suppose in addition that there are real numbers $\psi$, $c$ and $\gam$, 
with
$$0\le c\le (2\del)^{-1}\tet,\quad \gam\ge -sb\quad \text{and}\quad \psi\ge 0,$$
such that
\begin{equation}\label{11.34}
X^\Lam M^{\Lam \psi}\ll X^{c\del}M^{-\gam}\llbracket K_{a,b}^{r,r}(X)\rrbracket .
\end{equation} 
Then for some $\bfm\in [0,r-1]^R$, there is a real number $h$ with $0\le h\le 16k^{2R}b$, and positive 
integers $a'$ and $b'$ with $a'\le b'/l$, such that
\begin{equation}\label{11.35}
X^\Lam M^{\Lam \psi'}\ll X^{c'\del}M^{-\gam'}\llbracket K_{a',b'}^{r,r}(X)\rrbracket ,
\end{equation}
where $\psi'$, $c'$, $\gam'$ and $b'$ are real numbers satisfying the conditions
$$\psi'=\rho_\bfm(\psi+(\tfrac{1}{2}-r/s)b),\quad c'=\rho_\bfm(c+1),\quad \gam'=\rho_\bfm \gam,
\quad b'=k_\bfm b+h.$$
Moreover, the real number $k_\bfm$ satisfies $2^R\le k_\bfm\le k^R$.
\end{lemma}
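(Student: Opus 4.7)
The plan is to imitate the proof of Lemma \ref{lemma8.2}, substituting Lemma \ref{lemma11.3} for Lemma \ref{lemma7.3}, Lemma \ref{lemma11.5} for the closed-form identity (\ref{8.14}), and Lemma \ref{lemma11.6} for the elementary recurrence bounds on $a_R$ and $b_R$ that appeared at the end of Lemma \ref{lemma8.2}'s proof. The three new ingredients have been designed precisely to slot into those roles.

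First I combine the hypothesis (\ref{11.34}) with Lemma \ref{lemma11.3}, choosing $u_i=\vtil_i$ for $1\le i\le R$. This choice is admissible: Lemma \ref{lemma11.4} guarantees $\vtil_i\le u_i(\bfm;\bfh)$ under the standing hypothesis $a\le b/l$. After the same algebraic rearrangement that opens the proof of Lemma \ref{lemma8.2}, one obtains a tuple $\bfh(\bfm)$ with $0\le h_n(\bfm)\le 15k^Rb$ such that
$$\prod_{\mathbf{0}\le \bfm\le \bfv-1}\Tet_R(\bfm;\bfh)^{\phi_{m_1}\ldots \phi_{m_R}}\gg X^{-(c+1)\del}M^{\Lam(\psi+\tfrac{1}{2}b)+\gam}.$$
The presence of $\tfrac{1}{2}b$ in place of $b$ (cf.\ (\ref{8.8})) is the direct consequence of the normalization $(X/M^{b/2})^{-\Lam-\del}$ used in Lemma \ref{lemma11.3}.

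Next I invoke Lemma \ref{lemma8.1} with $z_\bfm=\Tet_R(\bfm;\bfh)$, $\bet_\bfm=\phi_{m_1}\ldots \phi_{m_R}$, and $\gam_\bfm=\btil_R(\bfm)$. The resulting quantity $\Ome$ is exactly the $B_R$ in (\ref{11.8}); Lemma \ref{lemma11.5} gives $B_R\ge \Btil_R$, and the definition (\ref{11.5}) of $s_0$ makes $\Ome=(s_0/s)^R$. Thus for some $\bfm$ in the admissible range,
$$\Tet_R(\bfm;\bfh)\gg X^{-\rho_\bfm(c+1)\del}M^{\rho_\bfm[\Lam(\psi+\tfrac{1}{2}b)+\gam]}.$$
Expanding the definition (\ref{11.3}) of $\Tet_R$, the error term $M^{-3sk^Rb}$ is dispatched from the observation that $c'\del=\rho_\bfm(c+1)\del\le k^R\tet$ is small, while the hypothesis $\gam\ge -sb$ ensures the growth term dominates. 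Absorbing the normalization factor $(X/M^{b/2})^{\Lam+\del}$ through the product (using $\sum \phi_{m_1}\ldots \phi_{m_R}\le r/s$, exactly as in the derivation of (\ref{8.8})) then yields (\ref{11.35}) with $\psi'$, $c'$, $\gam'$ as claimed.

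To identify integers $a'$ and $b'$ satisfying all the constraints of the lemma, I appeal to Lemma \ref{lemma11.6}: the restriction $0\le m_i\le \vtil_i-1$ is exactly the hypothesis of that lemma, which then delivers a real number $h$ with $0\le h\le 16k^{2R}b$ for which $b_R=k_\bfm b+h$, together with the desired inequality $a_R\le b_R/l$. Setting $a'=a_R$ and $b'=b_R$ completes the derivation. The two-sided bound $2^R\le k_\bfm\le k^R$ follows from the recurrence relations: the upper bound from $\btil_n\le k\btil_{n-1}$, and the lower bound from the enhanced recurrence $\btil_n\ge l\btil_{n-1}$ established in Lemma \ref{lemma11.6} as a consequence of $m_n\le \vtil_n-1$, using $l\ge 2$. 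The most delicate point is the careful tracking of the shift $b\mapsto (\tfrac{1}{2}-r/s)b$ through the normalization in the multinomial rearrangement, but the truly novel structural content resides in Lemmata \ref{lemma11.5} and \ref{lemma11.6}, together with the inequality $s<s_0$ guaranteed by (\ref{11.33}); beyond these the argument is a cosmetic adaptation of the proof of Lemma \ref{lemma8.2}.
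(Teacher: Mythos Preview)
Your proposal is correct and follows the same approach as the paper. One small point of ordering: the absorption of the normalization factor $(X/M^{b/2})^{\Lam+\del}$ via the bound $\sum\phi_{m_1}\cdots\phi_{m_R}\le r/s$ must be carried out on the \emph{product} (as in the passage from the paper's product bound to (\ref{11.36})) \emph{before} Lemma~\ref{lemma8.1} is applied, not after a single $\bfm$ has been selected; otherwise one obtains $\psi'=\rho_\bfm(\psi+\tfrac{1}{2}b)-\tfrac{1}{2}b$ rather than the stated $\psi'=\rho_\bfm(\psi+(\tfrac{1}{2}-r/s)b)$.
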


\begin{proof} We deduce from the postulated bound (\ref{11.34}), together with Lemmata 
\ref{lemma11.3} and \ref{lemma11.4}, that there exists a choice $\bfh=\bfh(\bfm)$ of tuples, with
$$0\le h_n(\bfm)\le 15k^Rb\quad (1\le n\le R),$$
such that
$$X^\Lam M^{\Lam \psi}\ll X^{(c+1)\del}M^{-\gam}(X/M^{b/2})^\Lam 
\prod_{{\bf0}\le \bfm\le \bfutil-1}\Tet_R(\bfm;\bfh)^{\phi_{m_1}\ldots \phi_{m_R}}.$$
Consequently, one has
$$\prod_{{\bf0}\le \bfm\le \bfutil-1}\Tet_R(\bfm;\bfh)^{\phi_{m_1}\ldots \phi_{m_R}}\gg 
X^{-(c+1)\del}M^{\Lam(\psi+b/2)+\gam }.$$
Note that from (\ref{7.1}) one has
$$\sum_{m=0}^{v-1}\phi_m=\frac{v(s-r)}{s(s-v)}\le \frac{r}{s},$$
so that
$$\sum_{m_1=0}^{\vtil_1-1}\ldots \sum_{m_R=0}^{\vtil_R-1}\phi_{m_1}\ldots \phi_{m_R}\le 
(r/s)^R\le r/s.$$
Then we deduce from the definition (\ref{11.3}) of $\Tet_n(\bfm;\bfh)$ that
\begin{align}
\prod_{{\bf0}\le \bfm\le \bfutil-1}\left( X^{-\Lam }\llbracket K_{a_R,b_R}^{r,r}(X)\rrbracket 
+M^{-3sk^Rb}\right)^{\phi_{m_1}\ldots \phi_{m_R}}&\notag \\
\gg X^{-(c+1)\del}M^{\Lam (\psi+(\frac{1}{2}-r/s)b)+\gam}&.\label{11.36}
\end{align}

\par Put
\begin{equation}\label{11.37}
\Ome =B_R=\sum_{m_1=0}^{\vtil_1-1}\ldots 
\sum_{m_R=0}^{\vtil_R-1}\bet_\bfm^{(R)}\btil_R(\bfm),
\end{equation}
so that in view of Lemma \ref{lemma11.5}, one has $\Ome\ge \Btil_R$. Then an application of Lemma 
\ref{lemma8.1} to (\ref{11.36}) yields the relation
\begin{align*}
\sum_{m_1=0}^{\vtil_1-1}\ldots \sum_{m_R=0}^{\vtil_R-1}&\left( X^{-\Lam }
\llbracket K_{a_R,b_R}^{r,r}(X)\rrbracket 
+M^{-3sk^Rb}\right)^{\Ome /\btil_R(\bfm)}\\
&\gg X^{-(c+1)\del}M^{\Lam (\psi+(\frac{1}{2}-r/s)b)+\gam}.
\end{align*}
We see from (\ref{11.5}) and (\ref{11.37}) that $\Ome =(s_0/s)^R$, and so it follows from (\ref{11.6}) 
that $\Ome/\btil_R(\bfm)=1/\rho_\bfm$. Thus we conclude that for some $R$-tuple $\bfm$, one has
\begin{align}
X^{-\Lam }\llbracket K_{a_R,b_R}^{r,r}(X)\rrbracket +M^{-3sk^Rb}&\gg 
X^{-\rho_\bfm (c+1)\del}M^{\Lam \rho_\bfm (\psi+(\frac{1}{2}-r/s)b)+\rho_\bfm \gam}\notag \\
&\gg X^{-c'\del}M^{\Lam \psi'+\gam'}.\label{11.38}
\end{align}

\par We must again handle the removal of the term $M^{-3sk^Rb}$ on the left hand side of 
(\ref{11.38}). Noting that a direct induction leads from (\ref{8.3}) to the upper bound 
$\btil_R(\bfm)\le k^R$, we find by means of (\ref{11.6}) and (\ref{11.33}) that
$$\rho_\bfm=\btil_R(\bfm)(s/s_0)^R<\btil_R(\bfm)\le k^R.$$
We may therefore follow the analysis leading to (\ref{8.16}), mutatis mutandis, to conclude as before 
that
$$X^{-\Lam }\llbracket K_{a_R,b_R}^{r,r}(X)\rrbracket \gg X^{-c'\del}M^{\Lam \psi'+\gam'}.$$

\par We find from Lemma \ref{lemma11.6} that one has $a_R\le b_R/l$, and that 
$$k_\bfm b\le b_R\le k_\bfm b+16k^{2R}b.$$
Then we conclude that there exist integers $b'=b_R$, $a'=a_R$ and $h$ satisfying the conditions
$$0\le h\le 16k^{2R}b,\quad b'=k_\bfm b+h\quad \text{and}\quad a'\le b'/l,$$
for which
$$X^{-\Lam }\llbracket K_{a',b'}^{r,r}(X)\rrbracket \gg X^{-c'\del}M^{\Lam \psi'+\gam'}.$$
The desired conclusion (\ref{11.35}) follows at once, together with its associated conditions.\par

The only task that remains is to confirm the bounds $2^R\le k_\bfm\le k^R$. On the one hand, just as in 
the proof of Lemma \ref{lemma11.6}, one has $\btil_{n+1}\ge l\btil_n$ for each $n\ge 0$. Then an 
inductive argument confirms that $k_\bfm=\btil_R\ge l^R\btil_0\ge 2^R$. On the other hand, an even 
more elementary induction leads from the recurrence relations for $\btil_n$ to the upper bound 
$k_\bfm=\btil_R\le k^R\btil_0=k^R$. This completes the proof of the lemma.
\end{proof}

We now employ the conclusion of the last lemma to bound $J_{s+r}(X)$.

\begin{theorem}\label{theorem11.8}
Suppose that $s$, $k$ and $r$ are natural numbers with $k$ sufficiently large, $r=k-\lceil k^{1/3}\rceil$ 
and $1\le s\le s_1$, where
$$s_1=\tfrac{1}{2}\left( \tfrac{1}{2}k(k+1)-\tfrac{1}{3}k-3k^{2/3}\right) 
\left( 1+\sqrt{1-\frac{4r(\tfrac{1}{2}k(k-1)+3k^{5/3})}{(\tfrac{1}{2}k(k+1)-
\tfrac{1}{3}k-3k^{2/3})^2}}\right) .$$
Then for each $\eps>0$, one has $J_{s+r}(X)\ll X^{s+r+\eps}$.
\end{theorem}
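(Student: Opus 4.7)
The plan is to mimic closely the iterative argument used in the proof of Theorem \ref{theorem9.2}, with Lemma \ref{lemma11.7} replacing Lemma \ref{lemma8.2} throughout and with $s_0$ now defined via (\ref{11.5}). I would aim to show that, when $s$ is the largest integer strictly smaller than $s_1$, one must have $\Lam\le 0$; by the definition of $\Lam$ and the lower bound in (\ref{2.24}), this immediately yields $J_{s+r}(X)\ll X^{s+r+\eps}$.

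First I would observe that $s_1=\tet_+$ by direct inspection of (\ref{11.9}) and (\ref{11.12}), and then invoke Lemma \ref{lemma11.5} to deduce the key inequality $s<s_0$, i.e.\ (\ref{11.33}), provided $R$ is chosen large enough in terms of $s$ and $k$. Assuming $\Lam\ge 0$ for contradiction, Lemma \ref{lemma5.1} supplies an integer $h_{-1}$ with $0\le h_{-1}\le 4B$ for which
\[
X^\Lam \ll X^\del M^{Bs-sh_{-1}/3}\llbracket K_{0,B+h_{-1}}^{r,r}(X)\rrbracket .
\]
Taking $b_0=B+h_{-1}$, $a_0=0$, $\psi_0=0$, $c_0=1$, $\gam_0=\tfrac{1}{3}sh_{-1}-Bs$, $\kap_{-1}=k^R$ and $b_{-1}=1$ establishes the base case of the iteration.

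I would then construct by induction on $n$ sequences $(\kap_n),(h_n),(a_n),(b_n),(c_n),(\psi_n),(\gam_n)$ obeying the analogues of (\ref{9.4})--(\ref{9.7}), now with the modification
\[
\psi_{n+1}=(s/s_0)^R\kap_n\bigl(\psi_n+(\tfrac{1}{2}-r/s)b_n\bigr)
\]
dictated by the form of $\psi'$ in Lemma \ref{lemma11.7}. The invariants to preserve are $1\le b_n\le (20Rk^{2R}\tet)^{-1}$, $\psi_n\ge 0$, $\gam_n\ge -sb_n$, $0\le c_n\le (2\del)^{-1}\tet$ and, crucially, $a_n\le b_n/l$ (in place of $a_n\le b_n/\sqrt{k}$). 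Each is checked exactly as in Theorem \ref{theorem9.2}: iterating the recurrence for $c_n$ with $\kap_m\le k^R$ keeps $c_n\le 3k^{Rn}$; the ratio $s/s_0<1$ from (\ref{11.33}) controls $\gam_n$ against $b_n$; and Lemma \ref{lemma11.7} itself returns $2^R\le \kap_n\le k^R$, $0\le h_n\le 16k^{2R}b_n$, and $a_{n+1}\le b_{n+1}/l$, so the lemma can be reapplied at stage $n+1$.

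After $N$ iterations, Lemma \ref{lemma9.1} delivers $\llbracket K_{a_N,b_N}^{r,r}(X)\rrbracket \ll X^{\Lam+\del}$. Feeding this back into the analogue of (\ref{9.3}) at $n=N$, and using the geometric lower bound $b_n\ge \kap_0\cdots\kap_{n-1}B$ together with $s<s_0$, yields
\[
\Lam\psi_N\le 2(s/s_0)^{RN}\kap_0\cdots\kap_{N-1}Bs,
\]
whereas iterating the modified recurrence for $\psi_n$ supplies the matching lower bound
\[
\psi_N\ge N(\tfrac{1}{2}-r/s)(s/s_0)^{RN}\kap_0\cdots\kap_{N-1}B.
\]
Dividing gives $\Lam\ll (\tfrac{1}{2}-r/s)^{-1}/N$, and since $\tfrac{1}{2}-r/s>0$ for $s\ge \tfrac{1}{2}k(k+1)-3k$ and $r=k-\lceil k^{1/3}\rceil$ when $k$ is large, letting $N\to\infty$ forces $\Lam\le 0$, as required. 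The main obstacle is securing the geometric decay factor $s/s_0<1$: although $s_0$ is defined implicitly by the tree sum in (\ref{11.5}), Lemma \ref{lemma11.5} recasts this as the transparent quadratic condition $s<\tet_+=s_1$ via the closed form for $\Btil_n$, so this obstacle is already neutralised by the preparatory work of \S11.
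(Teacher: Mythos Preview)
Your proposal is correct and follows essentially the same approach as the paper: the paper's own proof simply states that the argument of Theorem \ref{theorem9.2} carries over with Lemma \ref{lemma11.7} replacing Lemma \ref{lemma8.2}, the condition $a_n\le b_n/l$ replacing $a_n\le b_n/\sqrt{k}$, and the observation that $s<s_1=\tet_+$ forces $s<s_0$ via Lemma \ref{lemma11.5} and the choice of $R$. The one minor imprecision is your appeal to ``$s\ge \tfrac{1}{2}k(k+1)-3k$'' when verifying $\tfrac{1}{2}-r/s>0$; the paper's setup in \S11 imposes this bound on $s+r$ rather than $s$, but since you have already reduced to the largest integer $s<s_1$ (so that $s$ is of order $\tfrac{1}{2}k^2$ while $r\sim k$), the conclusion $\tfrac{1}{2}-r/s>0$ is immediate for large $k$ in any case.
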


\begin{proof} Defining $s_0$ via (\ref{11.5}) as in the preamble to Lemma \ref{lemma11.4}, one finds 
that there are no serious modifications required in order to apply the argument of the proof of Theorem 
\ref{theorem9.2} to pass from Lemma \ref{lemma11.7} to the conclusion of the theorem, but with $s_0$ 
in place of $s_1=\tet_+$. The argument leading to (\ref{11.33}) shows, however, that whenever 
$s<s_1$, then one has also that $s<s_0$. The conclusion of the theorem is now immediate.
\end{proof}

In order to establish Theorem \ref{theorem1.3}, we have only to note that, on recalling (\ref{11.23}), 
one finds that
\begin{align*}
s_1+r&=\tet_++k-l\ge \tfrac{1}{2}k(k-1)-\tfrac{1}{3}k-7k^{2/3}+(k-k^{1/3})\\
&\ge \tfrac{1}{2}k(k+1)-\tfrac{1}{3}k-8k^{2/3}.
\end{align*}
Thus, whenever
$$1\le s\le \tfrac{1}{2}k(k+1)-\tfrac{1}{3}k-8k^{2/3},$$
one has $J_{s,k}(X)\ll X^{s+\eps}$.

\section{Some consequences of Theorem \ref{theorem1.3}}
We finish this paper by turning our attention to a few consequences of the asymptotically sharpest of our 
conclusions, namely Theorem \ref{theorem1.3}. We begin by proving Theorem \ref{theorem1.5}. 
Suppose that $k$ is a sufficiently large natural number, and put 
$$\grs=\left\lfloor \tfrac{1}{2}k(k+1)-\tfrac{1}{3}k-8k^{2/3}\right\rfloor .$$
In addition, take $s=\tfrac{1}{2}k(k+1)$, and put $\Del=s-\grs$. Then we find that 
\begin{equation}\label{12.1}
\Del\le \tfrac{1}{3}k+8k^{2/3}+1.
\end{equation}
Consequently, on making use of the trivial estimate $f_k(\bfalp;X)=O(X)$, we deduce from (\ref{2.1}) in 
combination with Theorem \ref{theorem1.3} that
\begin{align*}
J_{s,k}(X)&=\oint |f_k(\bfalp;X)|^{2\grs+2\Del}\d\bfalp \ll X^{2\Del}\oint 
|f_k(\bfalp;X)|^{2\grs}\d\bfalp \\
&\ll X^{2\Del}J_{\grs,k}(X)\ll X^{\grs+2\Del+\eps}\ll X^{s+\Del+\eps}.
\end{align*}
The conclusion of Theorem \ref{theorem1.5} is now immediate.\par

We next point out an application of Theorem \ref{theorem1.3} to Tarry's problem. When $h$, $k$ and $s$ 
are positive integers with $h\ge 2$, consider the Diophantine system
\begin{equation}\label{12.2}
\sum_{i=1}^sx_{i1}^j=\sum_{i=1}^sx_{i2}^j=\ldots =\sum_{i=1}^sx_{ih}^j\quad (1\le j\le k).
\end{equation}
Let $W(k,h)$ denote the least natural number $s$ having the property that the simultaneous equations 
(\ref{12.2}) possess an integral solution $\bfx$ with
$$\sum_{i=1}^s x_{iu}^{k+1}\ne \sum_{i=1}^sx_{iv}^{k+1}\quad (1\le u<v\le h).$$

\begin{theorem}\label{theorem12.1} When $h$ and $k$ are natural numbers with $h\ge 2$ and $k$ 
sufficiently large, one has $W(k,h)\le \tfrac{1}{2}k(k+1)+1$.
\end{theorem}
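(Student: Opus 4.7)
My plan is to derive the bound $W(k,h)\le \tfrac{1}{2}k(k+1)+1$ by combining the diagonal lower bound~(\ref{1.4}) with the strongly diagonal estimate of Theorem~\ref{theorem1.3}, applied in degree $k+1$, via an elementary pigeonhole argument. Throughout I take $s=K+1$ with $K=\tfrac{1}{2}k(k+1)$, so that $s$ lies exactly one above the main conjecture's critical value in degree~$k$.

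The first step is to confirm that Theorem~\ref{theorem1.3} is applicable in degree~$k+1$ at the exponent $s$. The admissible upper limit in that theorem, with $k$ replaced by $k+1$, equals $\tfrac{1}{2}(k+1)(k+2) - \tfrac{1}{3}(k+1) - 8(k+1)^{2/3} = K+\tfrac{2}{3}(k+1)-8(k+1)^{2/3}$, which exceeds $K+1$ provided $k$ is sufficiently large. This yields $J_{s,k+1}(X)\ll X^{s+\eps}$, whilst (\ref{1.4}) supplies $J_{s,k}(X)\gg X^{2s-K}=X^{s+1}$.

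Next I would recast the problem combinatorially. For each integer $k$-tuple $\bfa$, let $N(\bfa)$ count those $\bfx\in [1,X]^s\cap \dbZ^s$ satisfying $\sum_i x_i^j=a_j$ for $1\le j\le k$, and denote by $V(\bfa)$ the number of distinct values of $\sum_i x_i^{k+1}$ as $\bfx$ ranges over the corresponding fibre. If one can exhibit an $\bfa$ with $V(\bfa)\ge h$, then selecting $h$ tuples realising distinct $(k+1)$-th power sums produces an $h$-tuple of $s$-tuples solving~(\ref{12.2}) with pairwise distinct $(k+1)$-th power sums, yielding $W(k,h)\le s$.

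To complete the argument I would rule out the alternative by contradiction. Assuming $V(\bfa)\le h-1$ for every $\bfa$, pigeonholing the $N(\bfa)$ members of each fibre over the at most $h-1$ possible $(k+1)$-th power sums produces some integer $a^*$ with $N'(\bfa,a^*)\ge N(\bfa)/(h-1)$, where $N'(\bfa,a_{k+1}')$ is the refinement requiring also $\sum_i x_i^{k+1}=a_{k+1}'$. Hence $\sum_{a_{k+1}'}N'(\bfa,a_{k+1}')^2\ge N(\bfa)^2/(h-1)^2$, and summing over $\bfa$ and invoking $\sum_\bfa N(\bfa)^2=J_{s,k}(X)$ together with $\sum_{\bfa,a_{k+1}'}N'(\bfa,a_{k+1}')^2=J_{s,k+1}(X)$ gives $J_{s,k+1}(X)\ge J_{s,k}(X)/(h-1)^2\gg X^{s+1}/(h-1)^2$, contradicting the earlier upper bound for $X$ large. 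The only genuinely delicate point is the range check in the second paragraph verifying applicability of Theorem~\ref{theorem1.3} in degree $k+1$; once this is secured, the remainder of the argument is a straightforward pigeonhole deduction.
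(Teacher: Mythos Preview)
Your proof is correct and follows essentially the same route as the paper. The paper invokes \cite[Theorem 1.3]{Woo2012} as a black box, which asserts that $W(k,h)\le s$ whenever $J_{s,k+1}(X)=o(X^{2s-\frac{1}{2}k(k+1)})$; you simply unpack that criterion explicitly via the pigeonhole argument comparing $J_{s,k}(X)$ and $J_{s,k+1}(X)$, which is exactly the mechanism underlying the cited result. The range check for Theorem~\ref{theorem1.3} in degree $k+1$ and the arithmetic $2s-K=s+1$ are both handled correctly.
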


\begin{proof} The argument of the proof of \cite[Theorem 1.3]{Woo2012} shows that $W(k,h)\le s$ 
whenever one can establish the estimate
$$J_{s,k+1}(X)=o(X^{2s-\frac{1}{2}k(k+1)}).$$
But as a consequence of Theorem \ref{theorem1.3}, when $k$ is sufficiently large and
\begin{equation}\label{12.3}
1\le s\le \tfrac{1}{2}(k+1)(k+2)-\tfrac{1}{3}(k+1)-8(k+1)^{2/3},
\end{equation}
one has $J_{s,k+1}(X)\ll X^{s+\eps}$. Observe, however, that 
$X^{s+\eps}=o(X^{2s-\frac{1}{2}k(k+1)})$ provided only that $s<2s-\tfrac{1}{2}k(k+1)$, which is to 
say that $s\ge \tfrac{1}{2}k(k+1)+1$. Moreover, when $k$ is sufficiently large, one finds that the value 
$s=\tfrac{1}{2}k(k+1)+1$ satisfies the condition (\ref{12.3}), and thus we have 
$W(k,h)\le \tfrac{1}{2}k(k+1)+1$. This completes the proof of the theorem.
\end{proof}

The problem of estimating $W(k,h)$ has been investigated extensively by E. M. Wright and L.-K. Hua 
(see \cite{Hua1938}, \cite{Hua1949} and \cite{Wri1948}). L.-K. Hua was able to show that 
$W(k,h)\le k^2(\log k+O(1))$. This bound was improved in \cite[Theorem 1.3]{Woo2012} by means of 
the efficient congruencing method, delivering the upper bound $W(k,h)\le k^2+k-2$. In our most recent 
work on multigrade efficient congruencing, this bound was further improved in 
\cite[Theorem 12.1]{Woo2014} to obtain $W(k,h)\le \tfrac{5}{8}(k+1)^2$ for $k\ge 3$. The bound 
recorded in Theorem \ref{theorem12.1} above achieves the limit of the methods currently employed in 
which an elementary application of Vinogradov's mean value theorem is applied, as described in 
\cite{Woo1996b} and enhanced in \cite[\S12]{Woo2014}. As far as the author is aware, there is no lower 
bound available on $W(k,h)$ superior to $W(k,h)\ge k+1$. There consequently remains a substantial gap 
between our upper and lower bounds in Tarry's problem.\par

Finally, we explore the consequences of Theorem \ref{theorem1.3} in the context of Waring's problem. 
When $s$ and $k$ are natural numbers, let $R_{s,k}(n)$ denote the number of representations of the
 natural number $n$ as the sum of $s$ $k$th powers of positive integers. A formal application of the circle
 method suggests that for $k\ge 3$ and $s\ge k+1$, one should have
\begin{equation}\label{12.4}
R_{s,k}(n)=\frac{\Gam(1+1/k)^s}{\Gam(s/k)}\grS_{s,k}(n)n^{s/k-1}+o(n^{s/k-1}),
\end{equation}
where
$$\grS_{s,k}(n)=\sum_{q=1}^\infty\sum^q_{\substack{a=1\\ (a,q)=1}}\Bigl( q^{-1}\sum_{r=1}^q
e(ar^k/q)\Bigr)^se(-na/q).$$
With suitable congruence conditions imposed on $n$, one has $1\ll \grS_{s,k}(n)\ll n^\eps$, so that 
the conjectured relation (\ref{12.4}) constitutes an honest asymptotic formula. Let $\Gtil(k)$ denote the 
least integer $t$ with the property that, for all $s\ge t$, and all sufficiently large natural numbers $n$, one 
has the asymptotic formula (\ref{12.4}). By combining the conclusion of Theorem \ref{theorem1.3} with
 our recent work concerning the asymptotic formula in Waring's problem \cite{Woo2012b}, and the 
enhancement \cite[Theorem 8.5]{FW2013} of Ford's work \cite{For1995}, we are able to derive 
new upper bounds for $\Gtil(k)$ when $k$ is sufficiently large.\par

\begin{theorem}\label{theorem12.2}
Let $\xi$ denote the real root of the polynomial $6\xi^3+3\xi^2-1$, and put
 $C=(5+6\xi-3\xi^2)/(2+6\xi)$, so that
$$\xi=0.424574\ldots \quad \text{and}\quad C=1.540789\ldots.$$
Then for large values of $k$, one has $\Gtil(k)<Ck^2+O(k^{5/3})$.
\end{theorem}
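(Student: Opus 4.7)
The proof will combine Theorem \ref{theorem1.3} with the Hardy--Littlewood circle method applied to Waring's problem, following the template of \cite{Woo2012b} as refined in \cite[Theorem 8.5]{FW2013}. The basic shape is: extract a sharpened Weyl-type estimate from the new Vinogradov bound, dissect $[0,1)$ into major and minor arcs, handle the major arcs by standard means to produce the singular series and singular integral contribution of \eqref{12.4}, and use a pruning argument in the style of Ford \cite{For1995} on the minor arcs to bound their contribution by $o(n^{s/k-1})$.

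I would begin by translating the strongly diagonal estimate of Theorem \ref{theorem1.3} into a pointwise bound for $f_k(\bfalp;X)$. The classical route passes through Weyl differencing combined with Vinogradov's mean value theorem: whenever $\alp_k$ has a rational approximation $|\alp_k - a/q|\le q^{-2}$ with $(a,q)=1$ and $q$ lying in a suitable range, the strongly diagonal behaviour for $J_{s,k}(X)$ at $s$ close to $\tfrac{1}{2}k(k+1)$ yields $|f_k(\bfalp;X)|\ll X^{1-\rho+\eps}$ with a decay rate $\rho$ governed by how close the admissible $s$ is to the critical value. Using Theorem \ref{theorem1.3}, the admissible deficit shrinks from $O(k^2)$ to $\tfrac{1}{3}k+O(k^{2/3})$, so $\rho$ improves by a factor of $1+O(1/k)$ over the previous best.

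Next, I would feed this Weyl bound into the hybrid minor-arc argument of \cite[Theorem 8.5]{FW2013}. The minor arcs are partitioned into dyadic ranges by the denominator of the rational approximation, and on each range one applies H\"older's inequality to $|f_k(\bfalp;X)|^{2s}$, combining the pointwise Weyl estimate on some factors with a Vinogradov-type mean-value bound (again drawn from Theorem \ref{theorem1.3}) on the others. Introducing a real parameter $\xi\in(0,1)$ to control the split, one obtains two competing upper bounds whose exponents are linear and cubic in $\xi$ respectively. Equating them and optimizing leads to the cubic relation $6\xi^3+3\xi^2-1=0$; substituting the unique real root into the resulting exponent produces the constant $C=(5+6\xi-3\xi^2)/(2+6\xi)$ of the statement.

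The main obstacle will be the error analysis rather than the optimization itself. The defect $\tfrac{1}{3}k+O(k^{2/3})$ from Theorem \ref{theorem1.3} propagates multiplicatively through both the Weyl step (which divides by roughly $k$) and the mean-value step, and it interacts with the $O(\log k)$ factor arising from the number of dyadic scales in the pruning. The delicate task is to verify that, after the $\xi$-optimization, the accumulated secondary losses sum to $O(k^{5/3})$ and no larger; this requires tracking the $k^{2/3}$-size correction from Theorem \ref{theorem1.3} in parallel with the $k^{1/3}$-size correction naturally generated by the Weyl/mean-value interpolation, and checking that their joint contribution does not exceed $k^{5/3}$ at any step of the minor-arc decomposition.
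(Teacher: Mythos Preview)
Your overall plan is in the right spirit, but the paper's proof is far more direct than what you describe, and some of your mechanics do not match. Rather than re-running a circle-method argument with Weyl estimates, dyadic pruning, and H\"older splits, the paper simply invokes the black box \cite[Lemma 10.7]{Woo2014}, which already packages the circle-method input into an explicit formula: one has $\Gtil(k)\le \lfloor u_1(k)\rfloor+1$ where
\[
u_0(k,t,v,w)=2t-\frac{(1-\Del_{t,k})(2t-2v-w(w-1))}{1-\Del_{t,k}+\Del_{v,k}/w},
\]
and $u_1(k)$ is the minimum of $u_0$ over admissible $t,v,w$. The entire proof then consists of substituting $t=k^2-k+1$ (so that $\Del_{t,k}=0$ by \cite[Corollary 1.2]{Woo2014}), $v=\tfrac{1}{2}k(k+1)$ (so that $\Del_{v,k}\le \tfrac{1}{3}k+8k^{2/3}+1$ by Theorem~\ref{theorem1.3}), and $w=\lfloor \bet k\rfloor$, yielding
\[
u_1(k)/(2k^2)\le \frac{3\bet^3+3\bet+2}{6\bet+2}+O(k^{-1/3}).
\]
The cubic $6\xi^3+3\xi^2-1=0$ does not arise from ``equating two competing upper bounds'' as you suggest; it is simply the numerator of the derivative of this single rational function of $\bet$, so the optimal $\bet$ is its real root.

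Your concern about the ``delicate task'' of error-tracking is also misplaced once the black box is in hand: the $O(k^{5/3})$ error falls out immediately, because the secondary term $8k^{2/3}$ in Theorem~\ref{theorem1.3} enters only through $\Del_{v,k}/w=\tfrac{1}{3\bet}+O(k^{-1/3})$ in the denominator of $u_0$. There is no interaction with dyadic scales or $\log k$ factors to manage. If you wish to follow the paper, replace your minor-arc narrative with a direct citation of \cite[Lemma 10.7]{Woo2014} and the two-line calculus optimisation above.
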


For comparison, in \cite[Theorem 1.3]{Woo2014}, we derived an analogous bound in which $C$ is 
replaced by the slightly larger number $1.542749\ldots $.

\begin{proof}[The proof of Theorem \ref{theorem12.2}] In general, we write $\Del_{s,k}$ for the least 
real number with the property that, for all $\eps>0$, one has
$$J_{s,k}(X)\ll X^{2s-\frac{1}{2}k(k+1)+\Del_{s,k}+\eps}.$$
Then it follows from \cite[Lemma 10.7]{Woo2014} that one has $\Gtil(k)\le \lfloor u_1(k)\rfloor +1$, 
where
$$u_1(k)=\min_{\substack{1\le t\le k^2-k+1\\ \Del_{t,k}<1}}\underset{2v+w(w-1)<2t}
{\min_{1\le w\le k-1}\min_{v\ge 1}}\, u_0(k,t,v,w),$$
and
$$u_0(k,t,v,w)=2t-\frac{(1-\Del_{t,k})(2t-2v-w(w-1))}{1-\Del_{t,k}+\Del_{v,k}/w}.$$

\par Suppose that $k$ is a large natural number, and let $\bet$ be a positive parameter to be determined 
in due course. We take
$$w=\lfloor \bet k\rfloor\quad \text{and}\quad v=\tfrac{1}{2}k(k+1).$$
Then it follows from Theorem \ref{theorem1.3}, just as in the discussion leading to (\ref{12.1}), that the 
exponent $\Del_{v,k}$ is permissible, where
$$\Del_{v,k}\le \tfrac{1}{3}k+8k^{2/3}+1.$$
Also, by taking $t=k^2-k+1$, one sees from \cite[Corollary 1.2]{Woo2014} that $\Del_{t,k}=0$. Then 
we deduce that
$$u_1(k)\le 2(k^2-k+1)-\frac{2k^2-k(k+1)-(\bet k)^2+O(k)}
{1+\tfrac{1}{3}k/(\bet k)+O(k^{-1/3})}.$$
It follows that
\begin{align*}
u_1(k)/(2k^2)&\le 1-\frac{\bet(\tfrac{1}{2}-\tfrac{1}{2}\bet^2)}{\bet+\frac{1}{3}}+O(k^{-1/3})\\
&=\frac{3\bet^3+3\bet+2}{6\bet+2}+O(k^{-1/3}).\end{align*}
A modest computation confirms that the optimal choice for the parameter $\bet$ is $\xi$, where $\xi$ is 
the real root of the polynomial equation $6\xi^3+3\xi^2-1=0$. With this choice for $\bet$, one finds that
$$u_1(k)\le \left(\frac{5+6\xi-3\xi^2}{2+6\xi}\right)k^2+O(k^{5/3}).$$
The conclusion of Theorem \ref{theorem12.2} is now immediate.
\end{proof}

\bibliographystyle{amsbracket}
\providecommand{\bysame}{\leavevmode\hbox to3em{\hrulefill}\thinspace}

\end{document}